\documentclass{article}
\usepackage{arxiv}
\usepackage[T1]{fontenc}
\usepackage[english]{babel}
\usepackage{microtype}
\usepackage{amsmath,amssymb,amsthm,mathtools,aliascnt}
\usepackage{booktabs,tabularx,array}
\usepackage{graphicx,xcolor}
\usepackage{float,needspace}
\usepackage{enumitem}
\usepackage[colorlinks=true,allcolors=blue]{hyperref}
\usepackage[nameinlink,capitalise,noabbrev]{cleveref}
\crefname{appendix}{appendix}{appendices}
\Crefname{appendix}{Appendix}{Appendices}
\allowdisplaybreaks
\newtheorem{theorem}{Theorem}[section]
\newaliascnt{lemma}{theorem}
\newtheorem{lemma}[lemma]{Lemma}
\aliascntresetthe{lemma}
\Crefname{lemma}{Lemma}{Lemmas}
\newaliascnt{proposition}{theorem}
\newtheorem{proposition}[proposition]{Proposition}
\aliascntresetthe{proposition}
\Crefname{proposition}{Proposition}{Propositions}
\newaliascnt{corollary}{theorem}

\aliascntresetthe{corollary}
\Crefname{corollary}{Corollary}{Corollaries}
\theoremstyle{definition}
\newaliascnt{assumption}{theorem}
\newtheorem{assumption}[assumption]{Assumption}
\aliascntresetthe{assumption}
\Crefname{assumption}{Assumption}{Assumptions}
\newaliascnt{example}{theorem}
\newtheorem{example}[example]{Example}
\aliascntresetthe{example}
\Crefname{example}{Example}{Examples}
\theoremstyle{remark}
\newaliascnt{remark}{theorem}
\newtheorem{remark}[remark]{Remark}
\aliascntresetthe{remark}
\Crefname{remark}{Remark}{Remarks} \Crefname{assumption}{Assumption}{Assumptions}
\newcommand{\R}{\mathbb{R}}
\newcommand{\EE}{\mathbb{E}}
\newcommand{\EB}{\mathbb{E}_B}
\newcommand{\ED}{\mathbb{E}_D}
\newcommand{\PP}{\mathbb{P}}
\newcommand{\dd}{\,\mathrm{d}}
\newcommand{\ip}[2]{\langle #1,#2\rangle}
\newcommand{\fl}[1]{\underline{#1}_\Delta}

\title{Strong Convergence of Fully Discrete Tamed Euler Approximations for Time-Changed Stochastic Delay Differential Equations}
\author{Ruchun Zuo \quad Qian Guo\thanks{Corresponding author. Email: \href{mailto:qguo@shnu.edu.cn}{qguo@shnu.edu.cn}.}\\[0.5em]
{\small Department of Mathematics, Shanghai Normal University}\\
{\small Shanghai 200234, China}}
\date{}
\begin{document}
\maketitle
\begin{abstract}
We establish the strong convergence of a fully discrete equidistant tamed Euler--Maruyama (ETEM) method for time-changed stochastic delay differential equations (TCSDDEs) with state-dependent $\mathrm{d}t$ drifts.
A coupled one-sided condition on the subordinator-driven drift and diffusion allows superlinear coefficients with polynomial growth in both the current and delayed states.
For subordinators whose Laplace exponent satisfies $\psi(\lambda)\geq c_1\lambda^\alpha$ for all sufficiently large $\lambda$, with $\alpha\in(0,1)$, we derive strong error bounds in terms of the time step $\Delta$ and the subordinator sampling step $\delta$, without imposing a relationship between them.
Comparing the partitions generated by the exact and approximate inverse subordinators controls the sampling error without introducing a factor proportional to the number of time steps.
For coefficients Lipschitz in both time arguments and Lipschitz history, the scheme achieves a strong convergence rate of order $\Delta^{\alpha/2}(1+|\log\Delta|)^{(1-\alpha)/2}+\sqrt{\delta(1+|\log\delta|)}$.
A terminal lower bound $c(\Delta^{\alpha/2}+\delta^{1/2})$ for a multiplicative-noise equation with a stable subordinator establishes the sharpness of both polynomial exponents for this Euler scheme.
Numerical experiments illustrate convergence for nonautonomous and multidimensional superlinear delay equations, as well as delay equations with low-regularity time dependence.

\end{abstract}
\keywords{time-changed stochastic delay differential equation, inverse subordinator, fully discrete approximation, tamed Euler--Maruyama method, strong convergence, lower bound.}

\noindent\textbf{Mathematics Subject Classification (2020):} 65C30, 60H35, 34K50.

\section{Introduction}\label{sec:introduction}
Subdiffusion, characterized by a sublinear power-law scaling of the mean squared displacement over physical time, has emerged as a fundamental paradigm for describing non-Markovian transport across complex and heterogeneous media.
Ubiquitous manifestations appear across diverse scientific disciplines, including macromolecular crowding in cellular biology \cite{SaxtonJacobson1997}, tracer dispersion in disordered porous materials \cite{MetzlerKlafter2000}, and asset price movements with heavy-tailed resting periods in mathematical finance \cite{MagdziarzOrzelWeron2011}.
From the microscopic perspective of continuous-time random walks, such subdiffusive dynamics typically arise when particles experience trapping events, localized adhesion, or viscoelastic obstacles that induce heavy-tailed waiting times between successive jumps.
In the continuum diffusion limit, these microscopic trapping phenomena are effectively captured by stochastic differential equations (SDEs) driven by time-changed Brownian motion \cite{Kobayashi2011, MeerschaertScheffler2004}.
In this framework, physical Brownian fluctuations are subordinated to a non-decreasing, non-Markovian inverse subordinator whose flat plateaus precisely mirror the trapping intervals of the underlying particles.
Such time-changed formulations provide a rigorous stochastic representation for fractional Fokker--Planck equations and related anomalous transport equations \cite{CarnaffanKawai2017}.

In practical modeling, anomalous subdiffusion may also interact with hereditary memory, feedback loops, or transmission delays along the physical timeline.
To capture the interplay between non-Markovian trapping and delayed responses, time-changed stochastic delay differential equations (TCSDDEs) have recently attracted growing attention in qualitative stochastic analysis \cite{ZhangYuan2019, ZhangZhuYuan2021}.
From a modeling perspective, such formulations naturally distinguish two distinct dynamical features:
on the one hand, intrinsic microscopic fluctuations and trapping events advance with the subdiffusive clock and are driven by $\dd E(t)$ and $\dd B(E(t))$;
on the other hand, continuous background mechanisms—such as external forcing, steady dissipation, or transport along physical time—evolve continuously under $\dd t$.
For related subdiffusive models with external forces, see \cite{ChenWangDeng2019}.
Combining these dual driving forces with a discrete transmission delay $\tau>0$ to account for delayed state feedback, we consider on $[0,T]$ the following system
\begin{align}
    \mathrm{d}X(t) &=h(t,E(t),X(t),X(t-\tau))\dd t +f(t,E(t),X(t),X(t-\tau))\dd E(t)\notag\\
    &\quad+g(t,E(t),X(t),X(t-\tau))\dd B(E(t)),\qquad t\in[0,T],\notag\\
    X(t)&=\xi(t),\qquad t\in[-\tau,0].\label{eq:model}
\end{align}
where $E(t)$ is the inverse subordinator capturing the anomalous trapping dynamics, $B$ is an independent Brownian motion, and $\xi$ is a prescribed initial history.
In \eqref{eq:model}, the drift $h\dd t$ captures the continuous deterministic evolution of the system, while the subordinator-driven terms $f\dd E(t)$ and $g\dd B(E(t))$ account for the anomalous transport and stochastic fluctuations induced by the trapping mechanism.
The delayed state $X(t-\tau)$ accounts for hereditary feedback with fixed time lag $\tau$.
Qualitative properties such as pathwise existence, uniqueness, and stability for equations with time changes and delays have been investigated in \cite{ZhangYuan2019, ZhangZhuYuan2021}.
Subdiffusion-driven systems also arise in stochastic optimal control \cite{ZhangChen2024,zhang2026mean}.

Even when qualitative properties are established, explicit pathwise solutions to TCSDDEs are virtually nonexistent due to the interplay between non-Markovian trapping and infinite-dimensional delayed feedback.
For parameter estimation, risk quantification, and statistical inference in complex media, high-precision numerical simulation is indispensable \cite{MagdziarzOrzelWeron2011, MetzlerKlafter2000}.
In practical scientific computing, simulating time-changed systems across long horizons or large Monte Carlo ensembles places stringent demands on the numerical integrator.
While implicit methods offer reliable numerical stability for nonlinear stochastic systems \cite{HighamMaoStuart2002, MaoSabanis2003}, explicit integrators are strongly favored in multi-path settings due to their low per-step computational overhead.
However, in the presence of superlinear drifts and delayed feedback, classical explicit Euler--Maruyama schemes suffer from severe numerical explosion and finite-time moment divergence \cite{HutzenthalerJentzenKloeden2011,PetrovicMilosevic2026}, creating a sharp trade-off between computational cost and numerical stability.

Early numerical investigations for time-changed SDEs (TCSDEs) relied on the duality principle \cite{JumKobayashi2016}, which connects time-changed equations to classical ones.
However, this duality does not directly cover the independent $t$-dependence and state-dependent $\mathrm{d}t$ drift considered here.
Direct approximations on the subordinator grid, commonly known as subordinator-grid or $D$-grid methods, were subsequently investigated by Jin and Kobayashi~\cite{JinKobayashi2019, JinKobayashi2021}.
For superlinear coefficients, truncated Euler, Milstein, and semi-implicit schemes were developed in \cite{DengLiu2020, LiLiuTang2025, LiuMaoTangWu2020,WuZuo2026}.
However, Euler approximations on a $D$-grid need not converge in general when the $\dd t$ drift depends on the state.
As observed by Jin and Kobayashi~\cite{JinKobayashi2021}, reducing the subordinator sampling step $\delta$ does not force the time increments on the $t$-axis to vanish.
Indeed, for the linear equation $\mathrm{d}X(t)=X(t)\dd t$, the terminal error of the $D$-grid update remains strictly bounded away from zero as $\delta\to0$, as established in \Cref{prop:Dgrid}.

To overcome the non-convergence of $D$-grid schemes, recent efforts have focused on developing numerical methods on equidistant time grids.
In particular, Zuo~\cite{Zuo2026} established parameter-dependent strong convergence rates of Euler-type methods for the autonomous time-changed SDE, which operates purely under the subordinator without a $\dd t$ drift.
Subsequently, Zuo and Guo~\cite{ZuoGuo2026} designed an equidistant truncated Euler--Maruyama method for nonautonomous superlinear equations with a state-dependent $\dd t$ drift, demonstrating that an equidistant time mesh successfully recovers strong convergence in the non-delayed setting.
A natural question then arises as to whether one can simply borrow these techniques for the delayed system \eqref{eq:model}.
However, this turns out to be far from straightforward because the presence of time delay introduces non-trivial difficulties in both the stability analysis and error propagation.
Specifically, delayed states enter all drift and diffusion coefficients, requiring the analysis to simultaneously control polynomial growth in both current and historical states, superlinear diffusion, and the interplay between the deterministic time increment and the random subordinator increment.

Beyond the challenges of time delay, the convergence analyses in existing equidistant studies~\cite{Zuo2026, ZuoGuo2026} are formulated using exact inverse-subordinator values at the time-grid points.
A standard approximation instead samples the underlying subordinator with step size $\delta>0$ and determines the inverse process through first passage times \cite{DengLiu2020, JinKobayashi2019, JumKobayashi2016, LiuMaoTangWu2020}.
Here, we include the resulting inverse-subordinator sampling error in the strong convergence analysis.
Assume that the subordinator increments can be sampled exactly.
Because the resulting discrete inverse subordinator takes values on the subordinator sampling grid and the driving Brownian motion is independent of the subordinator, increments of the time-changed Brownian motion can be simulated exactly by summing independent Gaussian increments, leaving the numerical scheme fully self-contained without requiring continuous Brownian interpolation.
Moreover, the pathwise discretization error incurred by such numerical inversion is known to achieve first-order accuracy with respect to the sampling resolution $\delta$~\cite{JinKobayashi2019, Magdziarz2009}.
For further developments in sampling inverse subordinators, including exact simulation, see \cite{GonzalezCazaresLinMijatovic2025,BiocicCedenoGironToaldo2026}.
We therefore develop a fully discrete ETEM method that simultaneously accounts for the equidistant time mesh and the discrete sampling of the subordinator.
Under polynomial growth in both current and delayed states, we rigorously quantify the joint error induced by both temporal discretization and subordinator sampling, identifying step-size choices that balance their contributions.

The main contributions are as follows.
\begin{enumerate}[label=(\roman*),leftmargin=*]
\item \textbf{Fully discrete strong convergence.}
We establish strong convergence rates for the fully discrete ETEM method under coupled one-sided and polynomial growth conditions.
The bounds quantify both time-stepping and inverse-subordinator sampling errors, with no prescribed relationship between $\Delta$ and $\delta$.
\item \textbf{Lower bound and sharpness.}
For a scalar multiplicative-noise equation with a stable subordinator, we prove the terminal strong lower bound $c(\Delta^{\alpha/2}+\delta^{1/2})$.
This establishes the sharpness of both polynomial exponents for the proposed scheme over the admissible equation class, up to logarithmic factors.
\end{enumerate}

The remainder of the paper is organized as follows.
\Cref{sec:preliminaries} provides the mathematical preliminaries, structural assumptions, and auxiliary estimates on inverse subordinators.
\Cref{sec:method-results} formulates the fully discrete ETEM scheme and states the strong convergence and lower-bound results.
\Cref{sec:convergence-analysis} develops the strong convergence analysis by comparing the partitions generated by the exact and approximate inverse subordinators.
\Cref{sec:experiments} illustrates convergence for scalar nonautonomous and two-dimensional superlinear delay equations, as well as delay equations with low-regularity time dependence.
Finally, \Cref{sec:conclusion} concludes the paper.
\needspace{7\baselineskip}
\section{Preliminaries}\label{sec:preliminaries}

In this section, we collect the necessary probabilistic foundations on inverse subordinators, introduce the governing structural assumptions on the coefficients and the initial history, and establish the global existence, uniqueness, and moment bounds of the exact solution.

Let $T>0$, $\tau>0$, and $d,m\in\mathbb N$.
Write $\R_+=[0,\infty)$ and denote the Euclidean and Frobenius norms by $|\cdot|$ and the Euclidean inner product by $\ip{\cdot}{\cdot}$.
The superscript $\top$ denotes transposition.
Let $(\Omega_B,\mathcal F_B,\PP_B)$ and $(\Omega_D,\mathcal F_D,\PP_D)$ carry an $m$-dimensional Brownian motion $B$ and a subordinator $D$, respectively.
We work on their product space with probability measure $\PP=\PP_B\otimes\PP_D$, so $B$ and $D$ are independent.
The notation $\EB$ denotes integration over the Brownian coordinate with the subordinator path fixed, and $\ED$ denotes integration over the subordinator coordinate.
Thus $\EE[\cdot]=\ED[\EB[\cdot]]$.

The subordinator $D=(D(u))_{u\geq0}$ has no killing and zero drift.
Its Laplace exponent $\psi$ and inverse $E$ are defined by
\begin{equation*}
    \EE[\exp\{-\lambda D(u)\}]=\exp\{-u\psi(\lambda)\},\qquad \lambda>0,\qquad E(t)=\inf\{u>0:D(u)>t\},\quad t\geq0.
\end{equation*}

Throughout this paper, the driftless subordinator $D$ is assumed to have an infinite L\'evy measure $\nu(0,\infty)=\infty$, ensuring that $D$ exhibits infinitely many jumps and strictly increasing paths almost surely.
We assume the lower growth condition $\psi(\lambda)\geq c_1\lambda^\alpha$ for all $\lambda\geq\lambda_0$, where $0<\alpha<1$, $c_1>0$, and $\lambda_0>0$ are fixed constants.
This is a weaker, one-sided power-growth requirement obtained by fixing the reference argument in the lower bound of the scaling condition in Kim, Song and Vondra\v{c}ek~\cite{KimSongVondracek2018}.
Earlier equidistant numerical schemes for TCSDEs were exclusively restricted to the standard $\alpha$-stable subordinator with $\psi(\lambda)=\lambda^\alpha$ \cite{Zuo2026, ZuoGuo2026}.
The lower growth condition adopted here substantially relaxes this restriction by accommodating a much broader family of driving subordinators without enforcing exact self-similarity.
Since the drift vanishes and $\psi(\lambda)\to\infty$ as $\lambda\to\infty$, we have $D(u)\to\infty$ almost surely, which guarantees that the inverse subordinator $E$ has continuous nondecreasing sample paths with $E(0)=0$.
Let $(\mathcal G_t)_{t\geq0}$ be the usual augmentation of the filtration generated at time $t$ by the full path of $D$ and the stopped Brownian path $(B(u\wedge E(t)))_{u\geq0}$.
Under $\PP_B$, the inverse subordinator $E$ and the measures $\dd t$ and $\dd E(t)$ are deterministic, while $B(E(t))$ is a continuous Gaussian martingale with quadratic covariation $E(t)I_m$.
We apply martingale estimates under $\PP_B$ and then average over $D$.
For each fixed choice of parameters, conditional assertions are understood on a $\PP_D$-full set of sample paths along which $D$ is strictly increasing and unbounded, so that $E$ is continuous, nondecreasing, and finite on $[0,T]$.

To collect the $\mathrm{d}t$ and $\mathrm{d}E$ contributions, set
\begin{equation*}
    A(t)=t+E(t).
\end{equation*}

The coefficients $h,f:[0,T]\times\R_+\times\R^d\times\R^d\to\R^d$ and $g:[0,T]\times\R_+\times\R^d\times\R^d\to\R^{d\times m}$ are Borel measurable.
For the presentation, take $T=K\tau$ with $K\in\mathbb{N}$.
A general finite terminal time can be treated by extending the coefficients constantly in their first time argument to the next multiple of $\tau$ and restricting the resulting solution.
All the assumptions below are preserved by this extension.

The spatial assumptions distinguish polynomial growth (\Cref{ass:polynomial}) from control of the current state (\Cref{ass:current}).
The former determines the higher moments needed for delayed terms (decoupling the historical memory), while the latter supports moment and error estimates on each delay interval (providing one-sided dissipative control to balance superlinear diffusion and prevent finite-time explosion).

\begin{assumption}\label{ass:polynomial}
There are constants $L_1>0$ and $\ell\geq1$ such that, for $t\in[0,T]$, $u\geq0$, and $x,x',y,y'\in\R^d$,
\begin{equation*}
    \sum_{\varphi\in\{h,f,g\}} |\varphi(t,u,x,y)-\varphi(t,u,x',y')| \leq L_1(1+|x|^\ell+|x'|^\ell+|y|^\ell+|y'|^\ell) (|x-x'|+|y-y'|),
\end{equation*}
and $|h(t,u,0,0)|+|f(t,u,0,0)|+|g(t,u,0,0)|\leq L_1$.
\end{assumption}

In particular, \Cref{ass:polynomial} gives the uniform growth estimate
\begin{equation}\label{eq:growth}
    \sum_{\varphi\in\{h,f,g\}}|\varphi(t,u,x,y)| \leq C(1+|x|^{\ell+1}+|y|^{\ell+1}).
\end{equation}

\begin{assumption}\label{ass:current}
There is a constant $K_1>0$ such that, for $t\in[0,T]$, $u\geq0$, and $x,x',y\in\R^d$,
\begin{equation*}
    \ip{x-x'}{h(t,u,x,y)-h(t,u,x',y)}\leq K_1|x-x'|^2.
\end{equation*}
For every $p\geq2$, there is a constant $K_2>0$, allowed to depend on $p$, such that
\begin{equation*}
    \ip{x-x'}{f(t,u,x,y)-f(t,u,x',y)}+\frac{p-1}{2}|g(t,u,x,y)-g(t,u,x',y)|^2\leq K_2|x-x'|^2.
\end{equation*}
Both constants are independent of $t,u$ and $y$.
\end{assumption}

\begin{assumption}\label{ass:time}
There are constants $L_2>0$ and $\beta_1,\beta_2\in(0,1]$ such that, for $s,t\in[0,T]$, $u,v\geq0$, and $x,y\in\R^d$,
\begin{equation*}
    \sum_{\varphi\in\{h,f,g\}} |\varphi(t,u,x,y)-\varphi(s,v,x,y)| \leq L_2(1+|x|^{\ell+1}+|y|^{\ell+1}) (|t-s|^{\beta_1}+|u-v|^{\beta_2}).
\end{equation*}
\end{assumption}

\begin{assumption}\label{ass:history}
The initial history $\xi:[-\tau,0]\to\R^d$ is deterministic and bounded, and satisfies
\begin{equation*}
    |\xi(t)-\xi(s)|\leq L_2|t-s|^\gamma, \qquad s,t\in[-\tau,0],
\end{equation*}
for some $\gamma\in(0,1]$.
\end{assumption}
The H\"older condition on the initial history in \Cref{ass:history} is standard in the numerical analysis of stochastic delay differential equations (see, e.g., \cite{MaoSabanis2003, WuMao2008}).

Throughout the paper, $C$ denotes a generic positive constant that may depend on the fixed exponents appearing in the estimate, $T,\tau,\alpha,{c_1,\lambda_0}$, the assumption constants, and the initial history.
It may change between occurrences and is independent of the mesh sizes $\Delta,\delta$ and of the subordinator path being fixed.
To average the conditional error bounds over $D$, we need moment estimates for inverse subordinator increments with exponential weights.
The following lemma establishes the moment and exponential integrability estimates of the inverse subordinator (see \cite{BertoinCaballero1995, JinKobayashi2021, JumKobayashi2016, Zuo2026, ZuoGuo2026}).

\begin{lemma}\label{lem:inverse}
For every $c>0$ and $q>0$, $\EE[\exp\{cE(T)\}]<\infty$ and
\begin{equation*}
    \EE[(E(t+r)-E(t))^q]\leq Cr^{\alpha q}, \qquad t\geq0,\quad 0<r\leq2T.
\end{equation*}
For $0<\Delta\leq1\wedge T$, define
\begin{equation*}
    \Xi_\Delta=\sup_{0\leq s\leq T-\Delta}\{E(s+\Delta)-E(s)\}.
\end{equation*}
Then
\begin{equation*}
    \EE[\Xi_\Delta^q]+\EE[e^{cE(T)}\Xi_\Delta^q]\leq C\Delta^{\alpha q}(1+|\log\Delta|)^{(1-\alpha)q}.
\end{equation*}
In particular, both terms are bounded by $C\Delta^{\rho q}$ for every fixed $0<\rho<\alpha$.
\end{lemma}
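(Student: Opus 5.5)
The proof has three parts: exponential moments, increment moments, and the supremum $\Xi_\Delta$. All of them use the duality $\{E(t)>u\}=\{D(u)<t\}$ (up to null sets) together with the Laplace-exponent lower bound $\psi(\lambda)\geq c_1\lambda^\alpha$ for $\lambda\geq\lambda_0$.

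\textbf{Exponential integrability.} By Chebyshev's inequality applied to $e^{-\lambda D(u)}$,
\[
\PP(E(t)>u)=\PP(D(u)<t)\leq e^{\lambda t-u\psi(\lambda)}
\]
for every $\lambda>0$. Fix $\lambda\geq\lambda_0$ so large that $c_1\lambda^\alpha>c+1$. This gives $\PP(E(T)>u)\leq e^{\lambda T}e^{-(c+1)u}$. Integrating the tail, $\EE[e^{cE(T)}]=1+\int_0^\infty ce^{cu}\PP(E(T)>u)\dd u<\infty$. The same tail bound yields finiteness of all polynomial moments of $E(T)$.

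\textbf{Increment moments.} Apply the strong Markov property of $D$ at the first-passage time $\sigma_t=E(t)$. Since $D$ is strictly increasing and has no creeping issues to worry about beyond overshoot, $E(t+r)-E(t)$ is stochastically dominated by $E'(r)$, where $E'$ is the inverse of an independent copy of $D$: after time $\sigma_t$ the process $D$ starts at $D(\sigma_t)\geq t$ and only has to cover at most $r$. It therefore suffices to bound $\EE[E(r)^q]$ for $0<r\leq 2T$. Choosing $\lambda=(2/(c_1 u))^{1/\alpha}\vee\lambda_0$-type scaling in the Chernoff bound gives $\PP(E(r)>u)\leq e^{\lambda r-c_1u\lambda^\alpha}$. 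Taking $\lambda=r^{-1}$ when $r^{-1}\geq\lambda_0$ gives the bound $e^{1-c_1ur^{-\alpha}}$, so $E(r)/r^{\alpha}$ has uniformly exponential tails. Hence $\EE[E(r)^q]\leq Cr^{\alpha q}$. For $r$ bounded below (i.e.\ $r\geq\lambda_0^{-1}$), the claim follows from the moment bound on $E(2T)$.

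\textbf{Supremum bound.} This is the main obstacle. Cover $[0,T-\Delta]$ by the grid $s_k=k\Delta$, $k\leq N=\lceil T/\Delta\rceil$. By monotonicity of $E$,
\[
\Xi_\Delta\leq\max_k\{E(s_k+2\Delta)-E(s_k)\}.
\]
For each $k$, the domination above gives
\[
\PP\bigl(E(s_k+2\Delta)-E(s_k)>u\bigr)\leq \exp\{\lambda\cdot 2\Delta-c_1u\lambda^\alpha\}.
\]
Optimizing over $\lambda$ (taking $\lambda\sim(u/\Delta)^{1/(1-\alpha)}$, which is $\geq\lambda_0$ in the relevant range) yields a tail of the form $\exp\{-c(u\Delta^{-\alpha})^{1/(1-\alpha)}\}$. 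A union bound over the $N\leq C\Delta^{-1}$ points then gives
\[
\PP(\Xi_\Delta>u)\leq 1\wedge C\Delta^{-1}\exp\{-c(u\Delta^{-\alpha})^{1/(1-\alpha)}\}.
\]
Set $u_0=M\Delta^\alpha(1+|\log\Delta|)^{1-\alpha}$ with $M$ large. Split $\EE[\Xi_\Delta^{q}]=\int_0^\infty qu^{q-1}\PP(\Xi_\Delta>u)\dd u$ at $u_0$. The part below $u_0$ contributes $u_0^{q}$. Above $u_0$, the union bound decays fast enough to make the contribution at most $C\Delta^{\alpha q}$. The resulting bound is $C\Delta^{\alpha q}(1+|\log\Delta|)^{(1-\alpha)q}$.

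For the weighted term, use Cauchy--Schwarz:
\[
\EE[e^{cE(T)}\Xi_\Delta^q]\leq \EE[e^{2cE(T)}]^{1/2}\,\EE[\Xi_\Delta^{2q}]^{1/2}.
\]
Apply the first step with $2c$ and the supremum bound with $2q$. The final "in particular" claim follows because $\Delta^{\alpha-\rho}(1+|\log\Delta|)^{1-\alpha}$ is bounded on $(0,1]$.
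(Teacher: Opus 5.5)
Your proposal is correct and follows essentially the same route as the paper. Both arguments use the strong Markov property at $E(t)$ together with the Chernoff bound and $\psi(\lambda)\geq c_1\lambda^\alpha$, then cover $[0,T-\Delta]$ by windows of length $2\Delta$, apply a union bound, split the moment integral at $\Delta^\alpha(1+|\log\Delta|)^{1-\alpha}$, and finish with Cauchy--Schwarz for the weighted term. The only difference is that you use simpler fixed-$\lambda$ Chernoff bounds, which give exponential tails, for $\EE[e^{cE(T)}]$ and the increment moments, and reserve the optimized choice $\lambda\propto(u/\Delta)^{1/(1-\alpha)}$ for $\Xi_\Delta$, where it is needed to obtain the logarithmic power $(1-\alpha)q$; the paper instead derives all three claims from its single stretched-exponential tail bound.
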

\begin{proof}
Since $E(t)$ is a stopping time with $D(E(t))\geq t$, the event $\{E(t+r)-E(t)>s\}$ implies $D(E(t)+s)-D(E(t))\leq r$.
By the strong Markov property and the exponential Markov inequality, for every $\lambda>0$,
\begin{equation*}
	\PP(E(t+r)-E(t)>s)\leq\PP(D(s)\leq r)\leq\exp\{\lambda r-s\psi(\lambda)\}.
\end{equation*}
Using the lower growth condition and choosing $\lambda=(\alpha c_1s/r)^{1/(1-\alpha)}$ when $\lambda\geq\lambda_0$, we obtain
\begin{equation}\label{eq:inverse-increment-tail}
	\PP(E(t+r)-E(t)>s)\leq C_1\exp\{-c_2(s/r^\alpha)^{1/(1-\alpha)}\},\qquad t\geq0,\quad s>0,\quad 0<r\leq2T.
\end{equation}
When this choice of $\lambda$ is below $\lambda_0$, the ratio $s/r^\alpha$ is uniformly bounded for $0<r\leq2T$, so the same estimate holds after enlarging $C_1\geq1$.
Integrating this tail bound gives $\EE[e^{cE(T)}]<\infty$ since $1/(1-\alpha)>1$, as well as $\EE[(E(t+r)-E(t))^q]\leq Cr^{\alpha q}$ via the substitution $v=s/r^\alpha$.

For the uniform modulus, put $M_1=\lceil T/\Delta\rceil$.
If $j\Delta\leq u<(j+1)\Delta$, the monotonicity of $E$ gives $E(u+\Delta)-E(u)\leq E((j+2)\Delta)-E(j\Delta)$, whence $\{\Xi_\Delta>s\}\subseteq\bigcup_{j=0}^{M_1-1}\{E((j+2)\Delta)-E(j\Delta)>s\}$.
Applying the union bound and \eqref{eq:inverse-increment-tail} with $r=2\Delta$ yields
\begin{equation*}
    \PP(\Xi_\Delta>s)\leq\min\{1,C_1M_1\exp[-c_2(s/(2\Delta)^\alpha)^{1/(1-\alpha)}]\}.
\end{equation*}
Setting $s_*=(2\Delta)^\alpha[2\log(C_1M_1)/c_2]^{1-\alpha}$, the second term in the minimum does not exceed $\exp[-\frac12c_2(s/(2\Delta)^\alpha)^{1/(1-\alpha)}]$ for all $s\geq s_*$.

Splitting the moment integral $\EE[\Xi_\Delta^q]=q\int_0^\infty s^{q-1}\PP(\Xi_\Delta>s)\dd s$ at $s_*$, we obtain
\begin{align*}
    \EE[\Xi_\Delta^q]
    &\leq q\int_0^{s_*} s^{q-1}\dd s+q\int_{s_*}^\infty s^{q-1}\exp[-\tfrac12c_2(s/(2\Delta)^\alpha)^{1/(1-\alpha)}]\dd s\\
    &\leq s_*^q+C\Delta^{\alpha q}\\
    &\leq C\Delta^{\alpha q}(1+|\log\Delta|)^{(1-\alpha)q},
\end{align*}
where the substitution $v=s/(2\Delta)^\alpha$ bounds the tail integral, and $M_1\leq(1+T)/\Delta$ bounds $s_*^q$ in the last step.
Finally, Cauchy--Schwarz gives $\EE[e^{cE(T)}\Xi_\Delta^q]\leq(\EE[e^{2cE(T)}]\EE[\Xi_\Delta^{2q}])^{1/2}\leq C\Delta^{\alpha q}(1+|\log\Delta|)^{(1-\alpha)q}\leq C\Delta^{\rho q}$ for every fixed $\rho<\alpha$.
\end{proof}
\begin{remark}\label{rem:inverse-comparison}
For a standard inverse $\alpha$-stable subordinator, Gajda and Magdziarz~\cite{GajdaMagdziarz2014} show that, for fixed $t>0$ and every integer $n\geq1$, $\EE[(E(t+r)-E(t))^n]$ is of order $r^{1+(n-1)\alpha}$ as $r\to0$, whereas $\EE[E(r)^n]=n!r^{n\alpha}/\Gamma(1+n\alpha)$ at the origin.
For the general subordinator class considered here, \Cref{lem:inverse} gives $\EE[(E(t+r)-E(t))^q]\leq Cr^{\alpha q}$ uniformly over $t\geq0$.
The strong error analysis below uses the exponentially weighted maximum-increment estimate in that lemma.

For the inverse stable subordinator, Zuo~\cite{Zuo2026} derives a maximum-increment moment bound using a uniform modulus of continuity and self-similarity.
By combining an increment tail bound with a finite covering argument, \Cref{lem:inverse} establishes the same logarithmic moment order, $\Delta^{\alpha q}(1+|\log\Delta|)^{(1-\alpha)q}$, appearing in that proof for the broader class of subordinators satisfying $\psi(\lambda)\geq c_1\lambda^\alpha$ for $\lambda\geq\lambda_0$.
In particular, taking $\psi(\lambda)=\lambda^\alpha$ recovers the stable subordinator estimate.
\end{remark}

The next lemma establishes a logarithmic modulus of continuity for stochastic integrals with random bounds, which will be utilized to control the endpoints of the continuous and discrete inverse-subordinator Brownian partitions.
\begin{lemma}\label{lem:martingale-modulus}
Let $L\geq1$, $0<\delta\leq1$, and $p\geq2$.
Let $G$ be a predictable matrix-valued process on $[0,L]$, put $M(u)=\int_0^uG(v)\dd B(v)$, and suppose $S=\sup_{0\leq v\leq L}|G(v)|$ belongs to $L^{2p}(\PP_B)$.
Then
\begin{equation*}
    \EB\left[\sup_{\substack{0\leq u<v\leq L\\v-u\leq\delta}}|M(v)-M(u)|^p\right]\leq C(\EB[S^{2p}])^{1/2}[\delta\log(e+L/\delta)]^{p/2},
\end{equation*}
where $C$ depends only on $p$ and the dimensions.
\end{lemma}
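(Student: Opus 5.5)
The plan is a Lévy-type modulus of continuity for continuous martingales, obtained through a time change, a chaining bound on a grid, and a Hölder step that separates the random bound $S$ from the Gaussian structure.

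First I reduce to a Brownian motion. By the Dambis--Dubins--Schwarz representation applied componentwise, or more robustly by a direct chaining argument on a grid, the quadratic variation satisfies $\langle M\rangle_v-\langle M\rangle_u\leq S^2(v-u)$. This suggests controlling the oscillation of $M$ by the oscillation of a Brownian motion run at speed at most $S^2$. To avoid time-change subtleties in the vector case, I instead work directly on the grid $u_k=k\delta$, $k=0,\dots,N$, with $N=\lceil L/\delta\rceil$. Any pair $u<v$ with $v-u\leq\delta$ lies in some window $[u_k,u_{k+2}]$. Hence the supremum is bounded by $2\max_{k}\sup_{r\in[u_k,u_{k+2}]}|M(r)-M(u_k)|$.

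Next I bound a maximum of $N$ local suprema. Set $Y_k=\sup_{r\in[u_k,u_{k+2}]}|M(r)-M(u_k)|$. The key tail estimate comes from the exponential martingale inequality, localized on $\{S\leq a\}$ via the stopping time $\sigma_a=\inf\{v:|G(v)|>a\}$. For the stopped martingale, $\PP_B(Y_k>x,\,S\leq a)\leq 2d\exp\{-x^2/(4\delta a^2 C)\}$. A union bound over $k$ gives
\begin{equation*}
\PP_B\Bigl(\max_k Y_k>x,\ S\leq a\Bigr)\leq C N e^{-x^2/(C\delta a^2)}.
\end{equation*}
Handling the random $a$ is the main obstacle. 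I do not want to assume $S$ is bounded, and the target has the factor $(\EB[S^{2p}])^{1/2}$ rather than $\|S\|_p$, which suggests Cauchy--Schwarz. So I write $\max_kY_k\leq \max_k (Y_k/\bar S)\cdot \bar S$ for a suitable normalization. Concretely, I use the Burkholder--Davis--Gundy inequality at a higher exponent to get $\EB[Y_k^{2p}]\leq C\delta^{p}\EB[S^{2p}]$, which is fine for one window but loses a factor of $N$ under a crude union. The cleaner route is a Garsia--Rodemich--Rumsey or chaining argument with the ratio $Z=\max_k Y_k/(S\sqrt{\delta\log(e+L/\delta)})$. I aim to show that $\EB[Z^{2p}]\leq C$ by splitting dyadically over levels $S\in(2^{j-1},2^j]$ using the stopped estimates above. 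Each level contributes a sub-Gaussian tail in $Z$ uniformly in $j$. If this summation is awkward, the fallback is the time change $M=W(\langle M\rangle)$ componentwise in $d=1$ with Lévy's modulus for $W$ on $[0,S^2L]$, followed by a union over components.

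Finally I combine the pieces. Given $\EB[Z^{2p}]\leq C$, Cauchy--Schwarz yields
\begin{equation*}
\EB[(\max_kY_k)^p]\leq[\delta\log(e+L/\delta)]^{p/2}(\EB[Z^{2p}])^{1/2}(\EB[S^{2p}])^{1/2}.
\end{equation*}
Together with the factor $2^p$ from the covering step, this gives the claim. The step needing the most care is the uniform-in-level sub-Gaussian bound for $Z$. The integral $\int_0^\infty x^{2p-1}\min\{1,CNe^{-x^2\log(e+N)/C}\}\,dx$ must be bounded independently of $N$, which follows by splitting at $x$ equal to a constant.
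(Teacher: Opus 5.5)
Your window covering, the localization on $\{S\le a\}$, the exponential martingale inequality and the union bound are all sound. They coincide with the first half of the paper's proof. Your closing Cauchy--Schwarz step is also correct, \emph{provided} $\EB[Z^{2p}]\le C$ with $C$ depending only on $p$ and the dimensions.

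That estimate is where the proposal has a genuine gap. The level-wise bound $\PP_B(Z>z,\ S\in(2^{j-1},2^j])\le CNe^{-z^2\log(e+N)/C}$ is uniform in $j$, but it is not summable over $j\in\mathbb Z$. Splitting only gives $\PP_B(Z>z)\le\sum_j\min\{\PP_B(S\in(2^{j-1},2^j]),\,CNe^{-z^2\log(e+N)/C}\}$. If the law of $S$ charges $n$ dyadic scales evenly, this loses a factor $n$ (a $\sqrt{\log n}$ in the size of $Z$). So the resulting bound on $\EB[Z^{2p}]$ depends on the law of $S$, not only on $p$ and the dimensions. Your final integral $\int_0^\infty x^{2p-1}\min\{1,CNe^{-x^2\log(e+N)/C}\}\dd x$ presupposes exactly the unconditional tail bound that has not been established.

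The Dambis--Dubins--Schwarz fallback has the same defect. The scale $S^2$ is random and depends on $W$, and L\'evy's modulus is not uniform in the scale: already $\sup_{t>0}|W(t)|/\sqrt t=\infty$ almost surely by the law of the iterated logarithm. So you cannot simply substitute the random scale $S^2$ into a fixed-scale modulus bound.

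The target estimate is nevertheless true, and there are two ways to close the argument.

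\emph{The paper's route.} It never divides by $S$. It ties the truncation level to the threshold, taking $R=\|S\|_{L^{2p}}\sqrt y$ and $x=C_1\|S\|_{L^{2p}}\sqrt{\delta\log(e+L/\delta)}\,y$. Then the Gaussian term becomes $Ce^{-y}$. The truncation term $\PP_B(S>\|S\|_{L^{2p}}\sqrt y)$ contributes exactly $\EB[S^{2p}]/\|S\|_{L^{2p}}^{2p}=1$ to $\int_1^\infty py^{p-1}\PP_B(U>y)\dd y$ after the substitution $z=\sqrt y$. This bounds the deterministically normalized $U$ directly, with no Cauchy--Schwarz step.

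\emph{Repairing your self-normalized route.} Replace the dyadic splitting by the running supremum $S_w=\sup_{r\le w}|G(r)|$, taken right-continuous; this does not matter, since the integrator below has quadratic variation absolutely continuous in $w$.
\begin{itemize}
\item Define $N=\int_0^\cdot S_w^{-1}\mathbf 1_{\{S_w>0\}}\dd M(w)$. Then $\dd\langle N^i\rangle_w\le\dd w$ and $M=\int S\dd N$.
\item Integration by parts against the nondecreasing $S$ gives $M(v)-M(u)=S_v(N(v)-N(u))-\int_u^v(N(w)-N(u))\dd S_w$.
\item Hence $|M(v)-M(u)|\le 2S\sup_{u\le w\le v}|N(w)-N(u)|$.
\end{itemize}
It follows that $Z$ is at most twice the normalized window oscillation of $N$. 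Your fixed-level window and union-bound argument applies to $N$ with $a=1$ and yields $\EB[Z^{2p}]\le C$. Your Cauchy--Schwarz step then finishes the proof.
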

The proof is given in \Cref{app:martingale-modulus}.

\begin{example}\label{ex:clocks}
The following driving subordinators satisfy the lower growth condition $\psi(\lambda)\geq c_1\lambda^\alpha$, demonstrating that our framework covers physically realistic processes beyond standard $\alpha$-stable subordinators.
The tempered stable exponent $\psi(\lambda)=(\lambda+\mu)^\alpha-\mu^\alpha$ with $\mu>0$ obeys $\psi(\lambda)\geq\lambda^\alpha/2$ once $\lambda^\alpha\geq2\mu^\alpha$.
Its L\'evy density is $\alpha e^{-\mu z}z^{-1-\alpha}/\Gamma(1-\alpha)$, where the exponential tempering tames extremely large jumps while retaining $\alpha$-subdiffusive behavior at high frequencies.
For approximate simulation of tempered stable distributions, see \cite{GrabchakSaba2025}.
A finite sum $\psi(\lambda)=\sum_{k=1}^M w_k\lambda^{\alpha_k}$ with $w_k>0$ and $0<\alpha_k<1$ satisfies the condition with $\alpha=\max_k\alpha_k$ by retaining a corresponding dominant positive summand.
A stable L\'evy density restricted to $0<z\leq R$ for some cutoff threshold $R>0$ also satisfies the condition, since
\begin{align*}
    \psi(\lambda)&\geq\lambda^\alpha-\frac{\alpha}{\Gamma(1-\alpha)}\int_R^\infty z^{-1-\alpha}\dd z\\
    &\geq\tfrac12\lambda^\alpha
\end{align*}
for all sufficiently large $\lambda$.
This truncated subordinator produces sample paths with bounded jumps, which is widely utilized in financial engineering to ensure finite moments of all orders.
In contrast, Gamma subordinators have logarithmic Laplace growth and do not meet this lower power-law condition.
\end{example}
Compared with existing equidistant studies \cite{Zuo2026, ZuoGuo2026} where the subordinator is strictly restricted to the pure $\alpha$-stable case $\psi(\lambda)=\lambda^\alpha$, the lower-growth condition supplies a robust uniform modulus for the $t$-mesh analysis across all the aforementioned subordinator families.

The delay must be estimated with respect to its original integration measure.
For any bounded Borel function $V:\R^d\to\R$ and $\tau\leq t\leq T$,
\begin{equation*}
    \int_\tau^t V(X(s-\tau))\dd E(s)=\int_0^{t-\tau}V(X(r))\dd E(r+\tau).
\end{equation*}
The measure on the right cannot in general be replaced by $\dd E(r)$.
On each delay interval, the delayed state belongs to the already treated history.
The next estimate bounds the delayed integral using higher moments of the history while retaining the measure $\mathrm{d}A(s)$ at the current integration time.

\begin{lemma}\label{lem:delay}
Fix a subordinator path, an integer $1\leq k\leq K$, and $p>0$.
Let $Z$ be a process on $[-\tau,(k-1)\tau]$, and let $W$ be a nonnegative measurable process on $[(k-1)\tau,k\tau]$.
If the moments on the right-hand side below are finite, then
\begin{align*}
    \EB\left[\int_{(k-1)\tau}^{k\tau} W(s)|Z(s-\tau)|^p\dd A(s)\right] &\leq (A(k\tau)-A((k-1)\tau)) \sup_{(k-1)\tau\leq s\leq k\tau} \bigl(\EB[W(s)^2]\bigr)^{1/2}\\*
    &\quad\times \left(\EB\left[\sup_{-\tau\leq r\leq(k-1)\tau} |Z(r)|^{2p}\right]\right)^{1/2}.
\end{align*}
For $W=1$, the last two factors can instead be replaced by $\EB[\sup_{-\tau\leq r\leq(k-1)\tau}|Z(r)|^p]$.
\end{lemma}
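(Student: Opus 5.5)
The plan is to exploit the fact that, once the subordinator path is fixed, the measure $\dd A(s)=\dd s+\dd E(s)$ on $[(k-1)\tau,k\tau]$ is a deterministic finite Borel measure under $\PP_B$. Its total mass is $A(k\tau)-A((k-1)\tau)$, which is finite because $E$ is finite and continuous on $[0,T]$ along the chosen $\PP_D$-full set of paths. This lets us exchange $\EB$ with the $\dd A$-integral and then bound pointwise in $s$.

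\textbf{Step 1: Tonelli.} Since $W\geq0$ and $|Z|^p\geq0$ are jointly measurable and $\dd A$ is deterministic, Tonelli's theorem gives
\begin{equation*}
    \EB\left[\int_{(k-1)\tau}^{k\tau} W(s)|Z(s-\tau)|^p\dd A(s)\right]=\int_{(k-1)\tau}^{k\tau}\EB\bigl[W(s)|Z(s-\tau)|^p\bigr]\dd A(s).
\end{equation*}

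\textbf{Step 2: pointwise Cauchy--Schwarz.} For $s\in[(k-1)\tau,k\tau]$ the delayed argument $s-\tau$ lies in $[(k-2)\tau,(k-1)\tau]\subseteq[-\tau,(k-1)\tau]$. Hence
\begin{equation*}
    |Z(s-\tau)|^p\leq \sup_{-\tau\leq r\leq(k-1)\tau}|Z(r)|^p.
\end{equation*}
Cauchy--Schwarz under $\EB$ then yields
\begin{equation*}
    \EB\bigl[W(s)|Z(s-\tau)|^p\bigr]\leq \bigl(\EB[W(s)^2]\bigr)^{1/2}\Bigl(\EB\Bigl[\sup_{-\tau\leq r\leq(k-1)\tau}|Z(r)|^{2p}\Bigr]\Bigr)^{1/2}.
\end{equation*}
We bound the first factor by its supremum over $s\in[(k-1)\tau,k\tau]$. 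The right-hand side is then independent of $s$, and integrating against $\dd A$ produces the factor $A(k\tau)-A((k-1)\tau)$, which is the claimed inequality. For $W=1$ we skip Cauchy--Schwarz and use the pointwise bound directly, giving $\EB[\sup_r|Z(r)|^p]$ times the same mass.

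\textbf{Main subtlety.} The only delicate point is measurability of the supremum $\sup_r|Z(r)|$. If $Z$ has càdlàg or continuous paths, which is the case for the solution and for the interpolated scheme in the paper, the supremum can be taken over a countable dense set together with the endpoints, so it is measurable. Otherwise we interpret it as an outer expectation; the inequality is unaffected. The hypothesis that the right-hand moments are finite ensures that Steps 1 and 2 involve no $\infty\cdot0$ issues.
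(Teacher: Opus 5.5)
Your proposal is correct and follows the paper's own argument: Tonelli under $\PP_B$ with the deterministic measure $\dd A$, then pointwise Cauchy--Schwarz after bounding the delayed value $|Z(s-\tau)|^p$ by the history supremum on $[-\tau,(k-1)\tau]$, and for $W=1$ the direct supremum bound. Your remark on measurability of the supremum is a reasonable addition that the paper leaves implicit.
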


\begin{proof}
Under $\PP_B$, $A$ is deterministic and $s-\tau\leq(k-1)\tau$ on the interval of integration.
Tonelli's theorem and Cauchy--Schwarz give the first assertion.
For $W=1$, bound the delayed value directly by the history supremum.
\end{proof}

To resolve integral inequalities driven by the continuous nondecreasing clock $A(t)=t+E(t)$, we also recall the generalized Gronwall inequality on Stieltjes measures \cite{jacod2013limit,long2024ulam}.

\begin{lemma}\label{lem:stieltjes-gronwall}
Let $A$ be a continuous nondecreasing function on $[0,T]$ with $A(0)=0$.
Suppose that a nonnegative locally bounded Borel function $z$ satisfies
\begin{equation*}
    z(t) \leq a + c\int_0^t z(s)\dd A(s), \qquad 0\leq t\leq T,
\end{equation*}
where $a\geq0$ and $c\geq0$ are constants.
Then
\begin{equation*}
    z(t) \leq a e^{cA(t)}, \qquad 0\leq t\leq T.
\end{equation*}
\end{lemma}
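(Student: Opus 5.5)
The plan is to turn the integral inequality into a pathwise statement about the Stieltjes measure $\dd A$ and then iterate it, using continuity of $A$ to produce the exponential. Put $M=\sup_{0\leq t\leq T}z(t)<\infty$; this is finite because $z$ is locally bounded on the compact interval $[0,T]$. Substituting the inequality into itself $n$ times gives
\begin{equation*}
    z(t)\leq a\sum_{j=0}^{n-1}c^j I_j(t)+c^n\int_{0\leq s_n\leq\cdots\leq s_1\leq t} z(s_n)\dd A(s_n)\cdots\dd A(s_1),
\end{equation*}
where $I_j(t)$ denotes the $j$-fold iterated integral of $\dd A$ over the simplex $\{0\leq s_j\leq\cdots\leq s_1\leq t\}$, with $I_0=1$. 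Each substitution is legitimate because all integrands are nonnegative and Borel, so Tonelli's theorem applies.

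The key identity is $I_j(t)=A(t)^j/j!$. I would obtain it by pushing $\dd A$ forward under $A$: since $A$ is continuous and nondecreasing with $A(0)=0$, the image of $\dd A$ restricted to $[0,t]$ is Lebesgue measure on $[0,A(t)]$. The change of variables $\int_0^t\phi(A(s))\dd A(s)=\int_0^{A(t)}\phi(r)\dd r$ then holds for Borel $\phi\geq0$. Applying it inductively to $\phi(r)=r^{j-1}/(j-1)!$ gives the identity. Equivalently, continuity of $A$ makes the diagonal sets null for $\dd A\otimes\dd A$, so the symmetric product measure on $[0,t]^j$ splits into $j!$ congruent simplices.

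This is the one place where continuity of $A$ is needed, and I expect it to be the main obstacle. If $A$ had atoms, the ordered simplex would carry strictly more or less mass and the exponential bound would fail.

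Finally, bound the remainder by $c^nM\,A(t)^n/n!\leq c^nM\,A(T)^n/n!$, which tends to $0$ as $n\to\infty$. Letting $n\to\infty$ in the partial sums gives
\begin{equation*}
    z(t)\leq a\sum_{j\geq0}\frac{(cA(t))^j}{j!}=ae^{cA(t)}.
\end{equation*}

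An alternative first-try route is to set $y(t)=a+c\int_0^t z\dd A$ and show that $e^{-cA(t)}y(t)$ is nonincreasing via the Stieltjes product rule, which again uses continuity of $A$. I would keep the iteration argument as the main proof because it needs only Tonelli's theorem and the change-of-variables identity.
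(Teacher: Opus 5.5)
Your proof is correct. The paper does not prove this lemma; it only recalls it from the references cited just before the statement. So there is no in-paper route to compare with, and your Picard iteration is a valid self-contained argument.

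Each hypothesis is used exactly where it is needed. Local boundedness on the compact interval gives $M<\infty$, and hence the vanishing remainder $c^nMA(T)^n/n!$. Continuity of $A$ gives $I_j(t)=A(t)^j/j!$ through the push-forward of $\mathrm{d}A|_{[0,t]}$ onto Lebesgue measure on $[0,A(t)]$. Indeed, for $0\leq r\leq A(t)$ the set $\{s\leq t: A(s)\leq r\}$ is an interval $[0,\sigma]$ with $A(\sigma)=r$, by the intermediate value theorem. You should say explicitly that, since $\mathrm{d}A$ has no atoms, it does not matter whether the inner limits in each substitution are $[0,s]$ or $[0,s)$.

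Your alternative product-rule route, $\mathrm{d}(e^{-cA}y)=ce^{-cA}(z-y)\,\mathrm{d}A\leq0$ for $y=a+c\int_0^{\cdot}z\,\mathrm{d}A$, is shorter. It relies on the chain rule for $e^{-cA}$ against a continuous finite-variation integrator. Your iteration needs only Tonelli and a change of variables.

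One correction to your side remark: atoms alone do not break the exponential bound. If the integral is taken over $[0,t)$, the same iteration runs over strict simplices, whose $(\mathrm{d}A)^{\otimes j}$-mass is at most $A(t)^j/j!$ by symmetry, so $z(t)\leq ae^{cA(t)}$ survives. Only the closed convention $[0,t]$ fails, for instance when a jump at $t$ has $c\,\Delta A(t)\geq1$ and the inequality at time $t$ becomes vacuous. This does not affect your proof of the continuous case.
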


We first apply these estimates to the exact solution.
The resulting moments will also control the polynomial weights in the numerical error analysis.
A strong solution is a continuous process adapted to $(\mathcal G_t)_{t\geq0}$, with initial history $\xi$, which satisfies the integral form of \eqref{eq:model} for the given $B$ and $D$.

\Cref{prop:exact} ensures that, despite superlinear current-state growth and polynomial delayed growth, the TCSDDE \eqref{eq:model} admits a unique nonexplosive global strong solution whose moments of all orders remain uniformly bounded both conditionally under $\PP_B$ and unconditionally under $\PP$.
\begin{proposition}\label{prop:exact}
Under \Cref{ass:polynomial,ass:current,ass:time,ass:history}, \eqref{eq:model} admits a unique nonexplosive continuous strong solution satisfying, for every $p\geq2$,
\begin{equation*}
    \EB\left[\sup_{-\tau\leq t\leq T}|X(t)|^p\right] \leq Ce^{CA(T)}, \qquad \EE\left[\sup_{-\tau\leq t\leq T}|X(t)|^p\right]\leq C.
\end{equation*}
\end{proposition}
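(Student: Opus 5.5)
The proof proceeds by induction over the delay intervals $[(k-1)\tau,k\tau]$, $k=1,\dots,K$, with a fixed subordinator path (so that $E$ and $A$ are deterministic under $\PP_B$).

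\textbf{Well-posedness on one interval.} On $[(k-1)\tau,k\tau]$ the delayed argument $X(s-\tau)$ is already determined by the history or by the previous step. The equation is therefore a non-delayed time-changed SDE with random but adapted "frozen" input $y(s)=X(s-\tau)$. Its coefficients are locally Lipschitz in $x$ by \Cref{ass:polynomial} and satisfy the one-sided conditions of \Cref{ass:current} uniformly in $y$. Existence and uniqueness then follow by the standard localization argument:
\begin{itemize}
\item truncate with stopping times $\sigma_R=\inf\{t:|X(t)|\geq R\}$ and obtain a unique local solution from the Lipschitz theory for integrators $\dd t$, $\dd E$, $\dd B(E)$ under $\PP_B$;
\item show nonexplosion via the moment bound below, which gives $\PP_B(\sigma_R\leq k\tau)\leq CR^{-p}e^{CA(T)}\to0$.
\end{itemize}
Continuity and $(\mathcal G_t)$-adaptedness are inherited from the pieces.

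\textbf{Moment bound (main step).} Apply It\^o's formula to $|X(t\wedge\sigma_R)|^p$ under $\PP_B$; here $B(E)$ is a continuous martingale with bracket $E(t)I_m$. Write, for example,
\begin{equation*}
\ip{x}{f(t,u,x,y)} = \ip{x}{f(t,u,x,y)-f(t,u,0,y)} + \ip{x}{f(t,u,0,y)},
\end{equation*}
and split $g(t,u,x,y)=[g(t,u,x,y)-g(t,u,0,y)]+g(t,u,0,y)$ with $|a+b|^2\leq(1+\epsilon)|a|^2+C_\epsilon|b|^2$. Using \Cref{ass:current} with the exponent chosen slightly larger than $p$ (to absorb the $1+\epsilon$), the $\dd t$ and $\dd E$ terms are bounded by
\begin{equation*}
C|x|^p\dd A + C\bigl(1+|f(\cdot,0,y)|^p+|h(\cdot,0,y)|^p+|g(\cdot,0,y)|^p\bigr)\dd A,
\end{equation*}
after Young's inequality. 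By \eqref{eq:growth} the remainder is at most $C(1+|y|^{p(\ell+1)})\dd A$. The martingale term is handled by the Burkholder--Davis--Gundy inequality:
\begin{equation*}
\EB\Bigl[\sup\Bigl|\int |X|^{p-2}\ip{X}{g\,\dd B(E)}\Bigr|\Bigr]\leq \tfrac12\EB\bigl[\sup|X|^p\bigr]+C\,\EB\int |X|^{p-2}|g|^2\dd E.
\end{equation*}
Here $|X|^{p-2}|g|^2$ is split as before; the $x$-part $|g(x,y)-g(0,y)|^2$ is again controlled by \Cref{ass:current} together with the drift $\ip{x}{f}$ term. This coupling is the delicate point: a bound on $g$ alone is not available, so it has to come jointly from $\ip{x}{f}+\frac{p'-1}{2}|g|^2\leq K_2|x|^2$. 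One workable route is to first obtain the $\sup_t\EB$ bound via It\^o's formula for $|X|^{2p}$, which uses the coupled condition, and then use BDG with $\EB\int|X|^{p-2}|g|^2\dd E\leq C\EB\int(|X|^{2p}+1+\ldots)\dd A$. If this also fails to control $|g(x,\cdot)|^2|x|^{p-2}$, which can grow like $|x|^{p+2\ell}$, I would instead apply It\^o's formula to $\log$- or $(1+|x|^2)^{p/2}$ and use the supremum of the martingale only through a stopping-time/Gronwall argument, or bound the supremum via Doob's inequality on $|X|^{p}$ with the coupled condition at exponent $2p$.

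\textbf{Closing the induction and averaging over $D$.} Collect the estimates into an inequality of the form
\begin{equation*}
z(t)\leq a_k + C\int_{(k-1)\tau}^t z(s)\dd A(s),
\end{equation*}
where the delayed contribution $a_k$ is bounded using \Cref{lem:delay} (case $W=1$) by
\begin{equation*}
a_k\leq C\bigl(1+(A(k\tau)-A((k-1)\tau))\,\EB\bigl[\sup_{r\leq(k-1)\tau}|X(r)|^{p(\ell+1)}\bigr]\bigr).
\end{equation*}
\Cref{lem:stieltjes-gronwall} then gives $\EB[\sup|X|^p]\leq a_ke^{CA(T)}$. Since $a_k$ involves the higher moment of order $p(\ell+1)$ on earlier intervals, the induction hypothesis must hold for all $p$. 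Iterating $K$ times yields $\EB[\sup_{-\tau\leq t\leq T}|X(t)|^p]\leq C(1+A(T))^{K}e^{CA(T)}\leq Ce^{CA(T)}$. Letting $R\to\infty$ by Fatou's lemma gives the conditional bound. Finally, take $\ED$ and invoke \Cref{lem:inverse}: $\EE[e^{CE(T)}]<\infty$, so $\EE[\sup|X|^p]\leq Ce^{CT}\EE[e^{CE(T)}]\leq C$.
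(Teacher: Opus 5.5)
\textbf{Gap in the martingale step.} Your drift estimates, the delay treatment via \Cref{lem:delay}, the Stieltjes--Gronwall step, nonexplosion, Fatou and the averaging over $D$ all match the paper. The step that carries the real difficulty, however, is left open. After BDG you need a bound on $\EB\int_0^{t\wedge\sigma_R}|X|^{p-2}|g|^2\dd E$ in terms of the $\dd A$-integrals already in your estimate, and you do not supply one. You list several candidate routes and say yourself that the one you spell out may fail.

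As written it does fail. $|x|^{p-2}|g(t,u,x,y)|^2$ can be of order $|x|^{p+2\ell}$, so it is not dominated by $C(1+|x|^{2p}+\dots)$ when $2\ell>p$. Splitting $g$ at $x'=0$ and invoking \Cref{ass:current} does not help either: it only trades $|g-g_0|^2$ for $-\ip{x}{f-f_0}$, which is unbounded and which you have discarded. In your drift estimate you used the extra room in \Cref{ass:current} only to absorb the factor $1+\epsilon$, which throws away exactly the information needed here.

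\textbf{The paper's fix.} Apply \Cref{ass:current} with a parameter $p'$ so large that $\frac{p'-1}{2}\geq 2p$. Splitting at $x'=0$ then gives $\ip{x}{f}+p|g|^2\leq C(1+|x|^2+|y|^{2(\ell+1)})$. Write the It\^o correction as $\frac{p(p-1)}{2}|X|^{p-2}|g|^2=p|X|^{p-2}\,p|g|^2-\frac{p(p+1)}{2}|X|^{p-2}|g|^2$. This yields
\begin{equation*}
|X(t\wedge\sigma_R)|^p+\tfrac{p(p+1)}{2}J_R(t)\leq F_R(t)+\mathcal M_R(t),\qquad J_R(t)=\int_0^{t\wedge\sigma_R}|X|^{p-2}|g|^2\dd E,
\end{equation*}
where $F_R$ contains only $|\xi(0)|^p$ and $1+|X|^p+|X(s-\tau)|^{(\ell+1)p}$ integrated against $\dd A$. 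Taking $\EB$ removes the stopped martingale, so $\EB[J_R(t)]\leq C\EB[F_R(t)]$. This is precisely what closes your BDG absorption $\frac12\EB[\sup|X|^p]+C\EB[J_R(t)]$.

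\textbf{Rescuing your first route.} Run it at every order $q$, not only $2p$. It\^o's formula plus expectation (no supremum) and Gronwall give $\sup_s\EB[|X(s\wedge\sigma_R)|^q]\leq Ce^{CA(T)}$ for all $q$; the induction over delay intervals needs all orders anyway. With these all-order pointwise bounds the $|x|^{p+2\ell}$ growth is harmless, and BDG gives, without any absorption,
\begin{equation*}
\EB\Bigl[\sup_{t\leq T}|\mathcal M_R(t)|\Bigr]\leq C\Bigl(\int_0^T\EB\bigl[\mathbf 1_{\{s<\sigma_R\}}|X(s)|^{2p-2}|g|^2\bigr]\dd E(s)\Bigr)^{1/2}\leq Ce^{CA(T)}.
\end{equation*}
As submitted, neither argument is carried out, so the supremum moment bound is not yet established.
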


The proof is given in \Cref{app:exact-moments}.

\section{The ETEM Scheme}\label{sec:method}\label{sec:method-results}\label{sec:fully-discrete}
This section formulates the ETEM scheme for the delay system \eqref{eq:model} and establishes its convergence properties.

\subsection{Failure of the subordinator grid for state-dependent drift}\label{sec:subordinator-grid-failure}

In existing numerical studies for TCSDEs \cite{DengLiu2020, JinKobayashi2019, JinKobayashi2021, JumKobayashi2016, LiLiuTang2025, LiuMaoTangWu2020, WuZuo2026}, a widely adopted discretization strategy is the $D$-grid method, which generates discrete time nodes by sampling the subordinator on an equidistant partition with step size $\delta>0$,
\begin{equation*}
    \tau_i=D(i\delta),\qquad 0\leq i<N_\delta,\qquad \tau_{N_\delta}=T,
\end{equation*}
where $N_\delta=\min\{n\geq1:D(n\delta)\geq T\}$.
Setting $\Delta\tau_i=\tau_{i+1}-\tau_i$, this approach reduces the inverse subordinator increment to the deterministic constant $\delta$, which simplifies Brownian integration.
However, as observed by Jin and Kobayashi~\cite{JinKobayashi2021}, reducing the sampling step $\delta$ does not force the time increments $\Delta\tau_i$ to vanish.
\Cref{prop:Dgrid} provides a rigorous counterexample confirming that this represents a fundamental barrier.
For the state-dependent $\mathrm{d}t$ drift in the following example, the frozen $D$-grid Euler update has a terminal $L^p$ error bounded away from zero as $\delta\to0$.

\begin{proposition}\label{prop:Dgrid}
For $\mathrm{d}X(t)=X(t)\dd t$ with $X(0)=1$, let
\begin{equation*}
    Y_0=1,\qquad Y_{i+1}=(1+\Delta\tau_i)Y_i.
\end{equation*}
For every $p>0$, there is a constant $c_0>0$, independent of $\delta$, such that
\begin{equation*}
    \EE[|X(T)-Y_{N_\delta}|^p]\geq c_0.
\end{equation*}
\end{proposition}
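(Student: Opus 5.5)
The strategy is to compare the exact value $X(T)=e^T$ with the product $Y_{N_\delta}=\prod_{i=0}^{N_\delta-1}(1+\Delta\tau_i)$. We then use the fact that the largest jump of $D$ before it crosses $T$ does not shrink as $\delta\to0$. Because $\log(1+x)\leq x$, with the gap $x-\log(1+x)$ strictly positive for $x>0$, we have
\begin{equation*}
    \log Y_{N_\delta}=\sum_i\log(1+\Delta\tau_i)\leq\sum_i\Delta\tau_i-\sum_i\bigl(\Delta\tau_i-\log(1+\Delta\tau_i)\bigr)=T-\sum_i\phi(\Delta\tau_i),
\end{equation*}
where $\phi(x)=x-\log(1+x)\geq0$ is increasing, and we used $\sum_i\Delta\tau_i=\tau_{N_\delta}-\tau_0=T$. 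Since every term is nonnegative, $\sum_i\phi(\Delta\tau_i)\geq\phi(\max_i\Delta\tau_i)$. It follows that
\begin{equation*}
    X(T)-Y_{N_\delta}\geq e^T\bigl(1-e^{-\phi(\max_i\Delta\tau_i)}\bigr)\geq0,
\end{equation*}
so the error is bounded below by a deterministic increasing function of the largest step.

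It remains to show that $\max_i\Delta\tau_i$ stays bounded away from zero with probability bounded below, uniformly in $\delta$. Let $J$ be the largest jump of $D$ occurring at some time $u$ with $D(u-)<T$, and let $\eta\in(0,T)$ be fixed. We consider the event $\Omega_\eta=\{$some jump of $D$ of size at least $\eta$ occurs at a time $u$ with $D(u-)\leq T-\eta\}$. If that jump sits at time $u\in(i\delta,(i+1)\delta]$, then $\tau_i=D(i\delta)\leq D(u-)\leq T-\eta<T$, so $i<N_\delta$. Moreover $\tau_{i+1}\geq\min\{D(u),T\}\geq\min\{D(u-)+\eta,T\}$, because the last node is $T$. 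Hence $\Delta\tau_i\geq\min\{\eta,T-D(u-)\}\geq\eta$, using $D(u-)\leq T-\eta$. Therefore on $\Omega_\eta$ we have $\max_i\Delta\tau_i\geq\eta$ for every $\delta$. The event itself does not depend on $\delta$, so
\begin{equation*}
    \EE\bigl[|X(T)-Y_{N_\delta}|^p\bigr]\geq e^{pT}(1-e^{-\phi(\eta)})^p\,\PP(\Omega_\eta),
\end{equation*}
and it suffices to choose $\eta$ with $\PP(\Omega_\eta)>0$.

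The main obstacle is this positivity, since the Lévy measure is only assumed infinite, not known to charge a specific interval. I would pick $\eta$ such that $\nu[\eta,\infty)>0$, which is possible because $\nu\neq0$, and additionally $\eta<T/2$. By the Lévy–Itô decomposition, $D=D^{<\eta}+D^{\geq\eta}$, where the two parts are independent. The big-jump part $D^{\geq\eta}$ is compound Poisson, so its first jump time $\sigma$ is exponential, independent of $D^{<\eta}$, and has positive density on $(0,\infty)$. The small-jump part satisfies $D^{<\eta}(u)\to0$ as $u\to0$ almost surely, so there is $u_0>0$ with $\PP(D^{<\eta}(u_0)\leq T-\eta)\geq1/2$. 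Then the event $\{\sigma\leq u_0\}\cap\{D^{<\eta}(u_0)\leq T-\eta\}$ has positive probability by independence, and it is contained in $\Omega_\eta$: on it, $D(\sigma-)=D^{<\eta}(\sigma-)\leq D^{<\eta}(u_0)\leq T-\eta$, and the jump at $\sigma$ has size at least $\eta$. Taking $c_0=e^{pT}(1-e^{-\phi(\eta)})^p\,\PP(\Omega_\eta)>0$ completes the plan.
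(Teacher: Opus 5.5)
Your proposal is correct and follows essentially the same route as the paper. You rewrite $\log Y_{N_\delta}=T-\sum_i\phi(\Delta\tau_i)$ with $\phi(x)=x-\log(1+x)$, reduce to a single grid step of length at least $\eta$, and produce that step with positive probability via the L\'evy--It\^o split into independent small- and large-jump parts with an exponential first large-jump time. The differences are cosmetic: you control the small-jump part through $D^{<\eta}(u)\to0$ almost surely rather than the paper's Markov bound on $\mathbb{E}[D_{\leq c}(u_0)]$, and you spell out the grid-index argument ($\tau_i\leq D(u-)$, $\tau_{i+1}\geq\min\{D(u),T\}$) that the paper compresses into one line.
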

\begin{proof}
Recall that $\nu$ denotes the L\'evy measure of $D$.
Since $\nu(0,\infty)=\infty$, we can choose $0<c\leq T/2$ such that $\nu((c,\infty))>0$.
By the L\'evy--It\^o decomposition, we separate $D=D_{\leq c}+D_{>c}$ into jumps of size at most $c$ and jumps exceeding $c$.
These two processes are mutually independent.
The large-jump component $D_{>c}$ is a compound Poisson process with jump rate $\lambda_c=\nu((c,\infty))>0$.
Let $S:=\inf\{u>0:\Delta D(u)>c\}$ denote the arrival time of its first jump.
Then $S$ follows an exponential distribution with parameter $\lambda_c$, satisfying
\begin{equation}\label{eq:large-jump-time}
    \PP(S\leq u_0)=1-e^{-\lambda_cu_0}>0,\qquad u_0>0.
\end{equation}
For the small-jump component $D_{\leq c}$, its mean satisfies
\begin{equation*}
    \EE[D_{\leq c}(u)]=u\int_{(0,c]}z\nu(\dd z).
\end{equation*}
Since the integral is finite, this expectation vanishes as $u\to0$.
We can therefore choose $u_0>0$ sufficiently small such that $\EE[D_{\leq c}(u_0)]\leq T/8$.
Applying Markov's inequality to $D_{\leq c}(u_0)$ yields
\begin{equation*}
    \PP(D_{\leq c}(u_0)>T/4)\leq\frac{\EE[D_{\leq c}(u_0)]}{T/4}\leq\tfrac12,
\end{equation*}
which implies $\PP(D_{\leq c}(u_0)\leq T/4)\geq\frac12$.
Now define the favorable event
\begin{equation}\label{eq:favorable-event}
    \Omega_0:=\bigl\{S\leq u_0,\ D_{\leq c}(u_0)\leq T/4\bigr\}.
\end{equation}
Because $D_{\leq c}$ and $D_{>c}$ are independent, $\Omega_0$ occurs with strictly positive probability,
\begin{equation}\label{eq:event-prob}
    \PP(\Omega_0)=\PP(S\leq u_0)\,\PP\bigl(D_{\leq c}(u_0)\leq T/4\bigr)\geq\tfrac12(1-e^{-\lambda_cu_0})>0.
\end{equation}
We now analyze the trajectory of $D$ on the event $\Omega_0$.
By the definition of $S$, no jumps strictly larger than $c$ occur on the time interval $[0,S)$, which means $D_{>c}(S-)=0$.
Consequently, the subordinator immediately prior to this first large jump satisfies
\begin{equation*}
    D(S-)=D_{\leq c}(S-)+D_{>c}(S-)=D_{\leq c}(S-).
\end{equation*}
Since the small-jump path $u\mapsto D_{\leq c}(u)$ is nondecreasing and $S\leq u_0$ on $\Omega_0$, we have
\begin{equation*}
    D(S-)\leq D_{\leq c}(u_0)\leq T/4.
\end{equation*}
On $\Omega_0$, since $\Delta D(S)>c$, $D(S-)\leq T/4$, and $c\leq T/2$, we have
\begin{equation*}
	\min\{D(S),T\}-D(S-)=\min\{\Delta D(S),T-D(S-)\}\geq c.
\end{equation*}
Hence the sampled grid interval containing this jump has length at least $c$, even when truncated at $T$.
Since the function $r-\log(1+r)$ is nondecreasing on $[0,\infty)$, on this same event we have
\begin{align}\label{eq:Y-terminal-bound}
    Y_{N_\delta}&=\prod_{i=0}^{N_\delta-1}(1+\Delta\tau_i) \nonumber\\
    &=\exp\left\{T-\sum_{i=0}^{N_\delta-1}[\Delta\tau_i-\log(1+\Delta\tau_i)]\right\} \nonumber\\
    &\leq e^T\exp\{-c+\log(1+c)\} \nonumber\\
    &=(1+c)e^{T-c}.
\end{align}
Because $1+c<e^c$ for every $c>0$, the numerical solution strictly underestimates the exact solution $X(T)=e^T$ with pointwise discrepancy
\begin{equation*}
    |X(T)-Y_{N_\delta}|\geq e^T\bigl(1-(1+c)e^{-c}\bigr)>0.
\end{equation*}
Restricting the $L^p$ error to the favorable event $\Omega_0$ and applying \eqref{eq:event-prob}, we obtain
\begin{align}\label{eq:terminal-expectation-lower}
    \EE[|X(T)-Y_{N_\delta}|^p]
    &\geq\EE\bigl[|X(T)-Y_{N_\delta}|^p\mathbf{1}_{\Omega_0}\bigr] \nonumber\\
    &\geq\tfrac12(1-e^{-\lambda_cu_0})e^{pT}\bigl(1-(1+c)e^{-c}\bigr)^p\nonumber\\
    &=:c_0>0.
\end{align}
Since $c_0$ depends only on $T$, $c$, $u_0$, and $p$, and is independent of $\delta$, this completes the proof.
\end{proof}

\subsection{Construction of the ETEM scheme}\label{sec:fd-scheme}
To control the increments of the $\mathrm{d}t$ drift, following Zuo and Guo~\cite{ZuoGuo2026}, we use an equidistant partition in $t$.
Let $J\in\mathbb N$, $\Delta=\tau/J\leq1\wedge T$, $N=KJ$, and $t_n=n\Delta$.
Then $t_n-\tau=t_{n-J}$, so every delayed grid argument is an existing node.
Define $\fl{s}=t_n$ and $n(s)=n$ for $s\in[t_n,t_{n+1})$, including the negative grid intervals.

Let $0<\delta\leq1$. Following \cite{JumKobayashi2016,MagdziarzOrzelWeron2011}, we define the discrete inverse subordinator by
\begin{equation}\label{eq:lower-clock}
    E^\delta(t)=\delta\bigl(\min\{j\geq1:D(j\delta)>t\}-1\bigr).
\end{equation}
Set $E_n=E(t_n)$, $\Delta E_n=E_{n+1}-E_n$, $E_n^\delta=E^\delta(t_n)$, $\Delta E_n^\delta=E_{n+1}^\delta-E_n^\delta$, and $\Delta B_n=B(E_{n+1}^\delta)-B(E_n^\delta)$.
By the strict increase of $D$, the approximation satisfies the uniform bounds (see, e.g., \cite{JumKobayashi2016,DengLiu2020})
\begin{equation}\label{eq:clock-bracket}
    0\leq E(t)-E^\delta(t)\leq\delta,\qquad |\Delta E_n^\delta-\Delta E_n|\leq\delta.
\end{equation}
Indeed, if $D(j\delta)\leq t<D((j+1)\delta)$, then $j\delta\leq E(t)\leq(j+1)\delta$ and $E^\delta(t)=j\delta$.
This also gives the endpoint convention when $t=D(j\delta)$.
For the time-changed Brownian motion, the It\^o isometry gives
\begin{align*}
    \EB[|B(E(t))-B(E^\delta(t))|^2]&=m(E(t)-E^\delta(t))\\
    &\leq m\delta.
\end{align*}
Thus an inverse subordinator discrepancy of size $\delta$ gives a pointwise root-mean-square (RMS) bound of order $\delta^{1/2}$.
Controlling the maximum in time through \Cref{lem:martingale-modulus} introduces the logarithmic factor used in the strong error estimate.

Let $X_n$ denote the ETEM approximation to $X(t_n)$, with its dependence on $\Delta,\delta$ left implicit, and set $X_j=\xi(t_j)$ for $-J\leq j\leq0$.
Write $\varphi(X_n)=\varphi(t_n,E_n^\delta,X_n,X_{n-J})$ for $\varphi\in\{h,f,g\}$, and define the common denominator
\begin{align}
    \mathcal D_n&=1+\sqrt{\Delta E_n^\delta}(|f(X_n)|+|g(X_n)|^2),\notag\\
    h_{\Delta,n}&=\frac{h(X_n)}{1+\sqrt\Delta\,|h(X_n)|},\notag\\
    f_n&=\frac{f(X_n)}{\mathcal D_n},\qquad g_n=\frac{g(X_n)}{\sqrt{\mathcal D_n}}.\label{eq:fd-coefficients}
\end{align}
The fully discrete ETEM scheme is
\begin{equation}\label{eq:fd-scheme}
    X_{n+1}=X_n+h_{\Delta,n}\Delta+\Delta E_n^\delta f_n +g_n\Delta B_n,\qquad 0\leq n<N.
\end{equation}
Under $\PP_B$, the Brownian increment $\Delta B_n$ is a centered Gaussian vector with covariance $\Delta E_n^\delta I_m$, independent of the previous states $X_0,\dots,X_n$, which also covers the degenerate case $\Delta E_n^\delta=0$.
When $\Delta E_n^\delta=0$, only the $\mathrm{d}t$ drift contributes.
In particular, the scheme remains well defined when several $t$-grid nodes have the same approximate inverse subordinator value.

Let $\widetilde X(s)=X_{n(s)}$ for $s\in[-\tau,T)$ and $\widetilde X(T)=X_N$.
The following algebraic estimates will be used in both the moment and residual bounds.
The common denominator preserves the single-state dissipativity in \eqref{eq:energy}
\begin{equation*}
    \ip{X_n}{f_n}+p|g_n|^2\leq C(1+|X_n|^2+|X_{n-J}|^{2(\ell+1)}).
\end{equation*}
The two drift increments satisfy
\begin{align}
    |\Delta h_{\Delta,n}+\Delta E_n^\delta f_n|&\leq\frac{\Delta|h(X_n)|}{1+\sqrt\Delta|h(X_n)|}+\frac{\Delta E_n^\delta|f(X_n)|}{1+\sqrt{\Delta E_n^\delta}|f(X_n)|}\notag\\
    &\leq\sqrt\Delta+\sqrt{\Delta E_n^\delta}\leq\sqrt{2(\Delta+\Delta E_n^\delta)}.\label{eq:driftstep}
\end{align}
The taming biases obey
\begin{align}
    |h(X_n)-h_{\Delta,n}|&\leq\sqrt\Delta\,|h(X_n)|^2,\notag\\
    |f(X_n)-f_n|&\leq\sqrt{\Delta E_n^\delta}|f(X_n)|\bigl(|f(X_n)|+|g(X_n)|^2\bigr),\notag\\
    |g(X_n)-g_n|&\leq\tfrac12\sqrt{\Delta E_n^\delta}|g(X_n)|\bigl(|f(X_n)|+|g(X_n)|^2\bigr).\label{eq:tamebias}
\end{align}
The last inequality follows from $1-(1+r)^{-1/2}\leq r/2$ for $r\geq0$.
Moreover,
\begin{equation}\label{eq:diffusionstep}
    \Delta E_n^\delta|g_n|^2\leq\sqrt{\Delta E_n^\delta}.
\end{equation}

\subsection{Moment bounds and strong convergence}\label{sec:main-results}

With the ETEM scheme in place, we formulate its uniform moment bounds and two-parameter strong convergence rates.
The first theorem establishes moment bounds over all mesh nodes on $[0,T]$, with constants independent of $\Delta$ and $\delta$.

\begin{theorem}\label{thm:moments}
Under \Cref{ass:polynomial,ass:current,ass:time,ass:history}, for every $p\geq2$,
\begin{equation*}
    \EB\left[\max_{-J\leq n\leq N}|X_n|^p\right]\leq Ce^{CA(T)}.
\end{equation*}
Extend $E_n^\delta=0$ for $-J\leq n<0$.
For $-J\leq i\leq j\leq N$,
\begin{equation*}
    \EB[|X_j-X_i|^p]\leq Ce^{CA(T)}\bigl[|t_j-t_i|^p+|t_j-t_i|^{\gamma p}+(E_j^\delta-E_i^\delta)^{p/2}\bigr].
\end{equation*}
If $i\geq0$, the term $|t_j-t_i|^{\gamma p}$ can be omitted.
Moreover,
\begin{equation*}
    \EE\left[\max_{-J\leq n\leq N}|X_n|^p\right]\leq C.
\end{equation*}
\end{theorem}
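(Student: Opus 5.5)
We proceed by induction over the delay intervals $[(k-1)\tau,k\tau]$, $k=1,\dots,K$, with the subordinator path fixed throughout, so that $E_n^\delta$, $\Delta E_n^\delta$ and $A$ are deterministic under $\PP_B$. Suppose that $\EB[\max_{-J\leq n\leq (k-1)J}|X_n|^q]\leq C_q e^{C_qA((k-1)\tau)}$ holds for \emph{every} $q\geq2$; for $k=1$ this follows from \Cref{ass:history}. We then prove the same statement up to index $kJ$. On the $k$-th interval the delayed arguments $X_{n-J}$ belong to the already controlled history, so they act as random parameters with moments of all orders. This is why we carry all moments $q$ at once: the delayed terms cost a factor $2(\ell+1)$ in the exponent at each step.

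\textbf{Step 1 (one-step energy identity).} We expand
\[
|X_{n+1}|^2=|X_n|^2+2\ip{X_n}{h_{\Delta,n}\Delta+\Delta E^\delta_n f_n}+2\ip{X_n}{g_n\Delta B_n}+|h_{\Delta,n}\Delta+\Delta E_n^\delta f_n+g_n\Delta B_n|^2 .
\]
We apply \Cref{ass:current} with $x'=0$, together with \Cref{ass:polynomial}, to get $\ip{x}{h(t,u,x,y)}\leq C(1+|x|^2+|y|^{2(\ell+1)})$, and use the same bound for the tamed $h_{\Delta,n}$ since the taming only rescales by a factor in $(0,1]$. For the $f,g$ pair we use the stated dissipativity $\ip{X_n}{f_n}+p|g_n|^2\leq C(1+|X_n|^2+|X_{n-J}|^{2(\ell+1)})$. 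The squared drift increments are bounded by \eqref{eq:driftstep} through $2(\Delta+\Delta E_n^\delta)$. The cross terms between the drift and $g_n\Delta B_n$ are handled with Young's inequality and \eqref{eq:diffusionstep}.

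Raising to the power $p/2$, we follow the standard tamed-Euler argument (Hutzenthaler--Jentzen--Kloeden, Sabanis): expand $(1+r)^{p/2}$, take conditional expectations $\EB[\cdot\mid X_0,\dots,X_n]$, and use the independence and Gaussianity of $\Delta B_n$. The higher-order terms are the main obstacle. Powers of $|g_n\Delta B_n|$ and of the drift increment multiplying $|X_n|^{p-2}$ must be bounded by $C(\Delta+\Delta E_n^\delta)(1+|X_n|^p+|X_{n-J}|^{q})$ without superlinear loss in $|X_n|$. For this we use \eqref{eq:driftstep}, \eqref{eq:diffusionstep} and $\Delta E^\delta_n|g_n|^2\leq\sqrt{\Delta E_n^\delta}$. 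Each additional power is converted into a factor $(\Delta+\Delta E_n^\delta)^{1/2}$ times a linear-growth quantity, keeping the dissipative term with coefficient $p$ rather than $(p-1)/2$. This is exactly why $K_2$ is allowed to depend on $p$.

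\textbf{Step 2 (summation and Gronwall).} Summing the one-step bounds gives
\[
|X_n|^p\leq|X_{(k-1)J}|^p+\sum_{i}C(1+|X_i|^p+|X_{i-J}|^q)(\Delta+\Delta E_i^\delta)+\mathcal M_n,
\]
where $\mathcal M_n$ is a discrete martingale. We control $\max_n\mathcal M_n$ by Burkholder--Davis--Gundy and absorb it in the usual way with Young's inequality. The delayed contributions are handled as in \Cref{lem:delay} by the induction hypothesis. Since $\sum_i(\Delta+\Delta E_i^\delta)$ is a Riemann--Stieltjes sum for $A$ up to an error $\delta$ per unit, we apply a discrete version of \Cref{lem:stieltjes-gronwall}. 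This yields $\EB[\max_n|X_n|^p]\leq Ce^{CA(k\tau)}$, using $E^\delta\leq E$ so that the sums are at most $A(T)$. After $K$ steps we obtain the first claim.

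\textbf{Step 3 (increment bound).} For $0\leq i\leq j$ we write
\[
X_j-X_i=\sum_{n=i}^{j-1}\bigl(h_{\Delta,n}\Delta+\Delta E_n^\delta f_n\bigr)+\sum_{n=i}^{j-1}g_n\Delta B_n .
\]
We apply H\"older's inequality to the first sum, noting that $|h_{\Delta,n}|\leq|h(X_n)|$ and $|f_n|\leq|f(X_n)|$ are bounded by \eqref{eq:growth}, and combine with the moment bound. This gives the contribution $|t_j-t_i|^p+(E_j^\delta-E_i^\delta)^p$; the second term is dominated by $(E_j^\delta-E_i^\delta)^{p/2}$ times $e^{CA(T)}$, since $E_j^\delta-E_i^\delta\leq E(T)$. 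For the stochastic sum we apply BDG, which gives $(E_j^\delta-E_i^\delta)^{p/2}$ times moments.

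For $i<0$ we split at $0$. The history part is controlled by \Cref{ass:history} and gives $|t_j-t_i|^{\gamma p}$. The part on $[0,t_j]$ is controlled as above, using $E^\delta_0=0$ and the extension convention.

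\textbf{Step 4 (unconditional bound).} Finally, the unconditional bound follows by taking $\ED$ and applying \Cref{lem:inverse}, which gives $\EE[e^{CE(T)}]<\infty$ and hence $\EE[e^{CA(T)}]<\infty$.
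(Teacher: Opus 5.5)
\textbf{Step~2 has a genuine gap.} There you sum the one-step bounds pathwise and then ``absorb'' $\max_n\mathcal M_n$ by BDG and Young. The leading increment of the Doob martingale is $p(1+|X_i|^2)^{p/2-1}\ip{X_i}{g_i\Delta B_i}$. After BDG and Young you are therefore left with $\tfrac12\EB[\max_n(1+|X_n|^2)^{p/2}]$ plus
\[
C\sum_i\EB\bigl[(1+|X_i|^2)^{p/2-1}|g_i|^2\bigr]\Delta E_i^\delta .
\]
Here $g$ may grow superlinearly in the current state, since only the coupled one-sided condition constrains it. So this term is not of the form $C\sum_i(\Delta+\Delta E_i^\delta)\EB[(1+|X_i|^2)^{p/2}]$, and your discrete Gronwall cannot close. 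The two obvious repairs both fail:
\begin{itemize}
\item The taming only gives $\Delta E_i^\delta|g_i|^2\le\sqrt{\Delta E_i^\delta}$, and $\sum_i\sqrt{\Delta E_i^\delta}$ is not bounded by $CA(T)$ uniformly in the mesh.
\item Using \eqref{eq:growth} instead needs moments of order $p+2\ell$ of the \emph{current} state. Your induction hypothesis covers only the history, so at fixed order $p$ this is circular.
\end{itemize}

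The paper avoids the martingale at this stage. It takes $\EB$ in the one-step estimate, so the odd Gaussian terms vanish. Iterating that recursion gives $\max_n\EB[|X_n|^q]\le Ce^{CA(T)}$ on the current delay interval for every $q$. Only then does it bound $\EB[\max_n|X_n|^p]$, using the representation $X_j=X_0+\sum_{n<j}(h_{\Delta,n}\Delta+\Delta E_n^\delta f_n)+\sum_{n<j}g_n\Delta B_n$ with H\"older, BDG, and \eqref{eq:growth} at order $(\ell+1)p$. This is essentially your Step~3 with the maximum inside. Alternatively, you could keep the negative multiple of $(1+|X_i|^2)^{p/2-1}\Delta E_i^\delta|g_i|^2$ that the coupled condition with coefficient $p$ leaves over, and bound its expectation first. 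That is how $J_R$ is handled in \Cref{app:exact-moments}. Your remark about the coefficient $p$ points this way, but you never use it to control the martingale.

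\textbf{A smaller error in Step~1.} Your one-step bound with the factor $C(\Delta+\Delta E_n^\delta)$ alone is false when $\Delta E_n^\delta$ is large. Large increments must be allowed, because $C$ has to be uniform in the subordinator path. For example, take $h=g=0$, $f(t,u,x,y)=1-x$ in the scalar case, and $\xi=0$. One step gives $X_1=\Delta E_0^\delta/(1+\sqrt{\Delta E_0^\delta})$, so $(1+|X_1|^2)^{p/2}$ is of order $(\Delta E_0^\delta)^{p/2}$. For $p>2$ this is not $1+O(\Delta+\Delta E_0^\delta)$. The paper treats $\Delta+\Delta E_n^\delta>1$ separately and keeps the extra term $(\Delta+\Delta E_n^\delta)^{p/2}$. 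It then controls the product via $\log(1+C(a+a^{p/2}))\le Ca$, which still yields $e^{C(T+E_N^\delta)}\le e^{CA(T)}$.
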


The detailed proof is given in \Cref{sec:proof-moments}.

To simplify the notation, we define the following error scales
\begin{equation}\label{eq:clock-error-scale}
    \omega_\Delta=\Delta^\alpha(1+|\log\Delta|)^{1-\alpha},\qquad \varepsilon_\delta=\delta^\gamma+\delta^{\beta_1}+\delta^{\beta_2}+\sqrt{\delta(1+|\log\delta|)}.
\end{equation}

\begin{theorem}\label{thm:fully-discrete}
Under \Cref{ass:polynomial,ass:current,ass:time,ass:history}, for every $p\geq2$, the ETEM scheme \eqref{eq:fd-scheme} satisfies
\begin{equation*}
    \left(\EE[\max_{0\leq n\leq N}|X(t_n)-X_n|^p]\right)^{1/p}\leq C\left[\Delta^\gamma+\Delta^{\beta_1}+\omega_\Delta^{\beta_2}+\omega_\Delta^{1/2}+\varepsilon_\delta\right].
\end{equation*}
In particular, for every fixed $0<\rho<\alpha$ and $0<\theta<1/2$,
\begin{equation*}
    \left(\EE[\max_{0\leq n\leq N}|X(t_n)-X_n|^p]\right)^{1/p} \leq C(\Delta^r+\delta^\eta),
\end{equation*}
where
\begin{equation*}
    r=\min\{\gamma,\beta_1,\rho\beta_2,\rho/2\},\qquad \eta=\min\{\gamma,\beta_1,\beta_2,\theta\}.
\end{equation*}
\end{theorem}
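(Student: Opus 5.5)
We argue conditionally on a fixed subordinator path and average over $D$ only at the end, using \Cref{lem:inverse}. The plan is an induction over the delay intervals $[(k-1)\tau,k\tau]$, $k=1,\dots,K$. On each interval the delayed arguments $X(s-\tau)$ and $X_{n-J}$ lie in the previously treated segment, so their error is already controlled. Write $e_n=X(t_n)-X_n$ and define the continuous interpolation of the scheme
\begin{equation*}
\bar X(t)=X_0+\int_0^t h_{\Delta,n(s)}\dd s+\int_0^t f_{n(s)}\dd E^\delta(s)+\int_0^t g_{n(s)}\dd B(E^\delta(s)),
\end{equation*}
whose values at the nodes are exactly $X_n$. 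The obstacle is that the scheme integrates against $E^\delta$, while $X$ integrates against $E$. We therefore first introduce an intermediate scheme $Y_n$ that uses the exact increments $\Delta E_n$ and $B(E_{n+1})-B(E_n)$, and we split $e_n=(X(t_n)-Y_n)+(Y_n-X_n)$.

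For the semi-discrete part $X(t_n)-Y_n$ we follow the equidistant argument of \cite{ZuoGuo2026}, adapted to the delay. We apply It\^o's formula to $|X(t)-\bar Y(t)|^p$ under $\PP_B$. The one-sided \Cref{ass:current} handles the current-state differences. The delayed differences are bounded by \Cref{ass:polynomial} and then by \Cref{lem:delay}, using the error already obtained on the previous interval together with the moments of \Cref{prop:exact} and \Cref{thm:moments}. The residuals fall into three groups.
\begin{itemize}[leftmargin=*]
\item The regularity of the exact solution, $|X(s)-X(\fl s)|$. Its $\dd t$ part is of order $\Delta$ and its $E$-part is of order $(E(s)-E(\fl s))^{1/2}\leq\Xi_\Delta^{1/2}$. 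On the first interval the history contributes $\Delta^\gamma$.
\item The time regularity from \Cref{ass:time}, which gives $\Delta^{\beta_1}+\Xi_\Delta^{\beta_2}$.
\item The taming biases \eqref{eq:tamebias}, which give $\sqrt\Delta+\Xi_\Delta^{1/2}$ times polynomial moments.
\end{itemize}
We take the supremum with BDG and close with \Cref{lem:stieltjes-gronwall} against $\dd A$. This yields, conditionally,
\begin{equation*}
\EB\max_n|X(t_n)-Y_n|^p\leq Ce^{CA(T)}\bigl(\Delta^{\gamma p}+\Delta^{\beta_1p}+\Xi_\Delta^{\beta_2p}+\Xi_\Delta^{p/2}\bigr).
\end{equation*}
Taking $\ED$ and applying \Cref{lem:inverse} (weighted version) converts the $\Xi_\Delta$ terms into $\omega_\Delta^{\beta_2}$ and $\omega_\Delta^{1/2}$.

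For the sampling part $Y_n-X_n$, both schemes live on the same $t$-grid, so the comparison is discrete and no factor $N$ should appear. We write the error recursion $Y_{n+1}-X_{n+1}$ and expand the square. The one-sided condition, which the common denominator preserves as noted before \eqref{eq:driftstep}, controls the coefficient differences at equal clock arguments. The remaining terms are the perturbations
\begin{equation*}
(\Delta E_n-\Delta E_n^\delta)f,\qquad g\bigl[(B(E_{n+1})-B(E_n))-\Delta B_n\bigr],\qquad \varphi(t_n,E_n,\cdot)-\varphi(t_n,E_n^\delta,\cdot).
\end{equation*}
The last is at most $C\delta^{\beta_2}$ by \eqref{eq:clock-bracket} and \Cref{ass:time}.

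\textbf{Main obstacle.} Summing the first two perturbations naively over $n$ costs a factor $N$. We instead regroup them telescopically as sums, which lets us compare the partitions $\{E_n\}$ and $\{E^\delta_n\}$ directly. The summed drift perturbation $\sum_n(\Delta E_n-\Delta E_n^\delta)f_n$ is handled by summation by parts. Its boundary terms are $(E-E^\delta)(t_n)\leq\delta$, and what remains is $\delta\sum_n|f_{n+1}-f_n|$, controlled via the increment bounds of \Cref{thm:moments}. If that sum proves too large, I would instead bound the drift perturbation as a Stieltjes integral with the step-function integrand against $\dd(E-E^\delta)$. The noise perturbation is a martingale in the Brownian time variable, of the form $\int G\dd B$ over the symmetric difference of the two partitions. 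Each endpoint discrepancy has length at most $\delta$, so \Cref{lem:martingale-modulus} with $L=E(T)+1$ bounds its maximum by $[\delta\log(e+L/\delta)]^{1/2}$ times a weight that is integrable after averaging. The history at the nodes gives $\delta^\gamma$ through \Cref{ass:history}, and the time-argument mismatch gives $\delta^{\beta_1}$. Gronwall again yields $\varepsilon_\delta$.

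Combining the two parts gives the first bound. The ``in particular'' statement follows from $\omega_\Delta\leq C\Delta^\rho$ for $\rho<\alpha$ and $\sqrt{\delta(1+|\log\delta|)}\leq C\delta^\theta$ for $\theta<1/2$.
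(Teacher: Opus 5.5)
Your first half ($X(t_n)-Y_n$) is plausible, but the sampling half has a genuine gap at exactly the step you flag as the main obstacle.

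\textbf{The drift mismatch.} Summation by parts is only a rewriting. With $a_k=E_k-E_k^\delta\in[0,\delta]$ it produces $\sum_k(f_k-f_{k-1})a_k$, which is the paper's quantity $\int_0^{E_N}(f_{j(u)}-f_{i(u)})\dd u$. The loss comes when you estimate it by $\delta\sum_n|f_{n+1}-f_n|$ using one-step increments.
\begin{itemize}
\item By \Cref{ass:time}, every step carries a time-argument change of size $\Delta^{\beta_1}$, giving $\delta N\Delta^{\beta_1}=\delta T\Delta^{\beta_1-1}$.
\item On the first delay interval, $X_{n-J}=\xi(t_{n-J})$ moves by $\Delta^\gamma$ per step, giving $\delta\tau\Delta^{\gamma-1}$.
\end{itemize}
Both terms are unbounded as $\Delta\to0$ with $\delta$ fixed whenever $\beta_1<1$ or $\gamma<1$. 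This is exactly the step-count factor the theorem excludes. It is also why the $\delta^{\beta_1}$ and $\delta^\gamma$ in your final paragraph have no source in your argument. Your Stieltjes fallback does not help: it is the same sum in other notation, and $E-E^\delta$ has total variation of order $E(T)$, not $\delta$.

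The missing idea is to bound the multi-step gap $f_{j(u)}-f_{i(u)}$ pointwise in $u$ directly from \Cref{thm:moments}, without telescoping. This gives powers of gaps of the monotone sequences $t_n$, $E_n^\delta$, $E_{n-J}^\delta$. \Cref{lem:ordered-partitions} then integrates these via $\sum_k(c_k-c_{k-1})a_k\leq\delta(c_{N-1}-c_0)$ and Jensen. Telescoping throws away the subadditivity of $x\mapsto x^{\beta_1}$ and $x\mapsto x^{\gamma}$ that this argument relies on.

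\textbf{The noise mismatch.} Your noise term has the same defect. The perturbation equals $\int_0^{E_m}(g_{i(u)}-g_{j(u)})\dd B(u)+\int_{E_m^\delta}^{E_m}g_{j(u)}\dd B(u)$. \Cref{lem:martingale-modulus} handles only the second, single endpoint integral. The mismatch set $U=\bigcup_k(E_k^\delta,E_k]$ can have measure comparable to $E_N$ when $\delta$ is coarse relative to the clock increments. So the first integral is small only through BDG together with the $L^2(\dd u)$ bound of \Cref{lem:partition-coefficients}. It is not small merely because each individual discrepancy has length at most $\delta$.

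\textbf{Comparing two numerical solutions.} There is also a structural obstruction to expanding $|Y_{n+1}-X_{n+1}|^2$.
\begin{itemize}
\item The remark before \eqref{eq:driftstep} is a single-state bound, not a two-state monotonicity estimate. You would need the latter for tamed coefficients with different denominators ($\Delta E_n$ versus $\Delta E_n^\delta$) and different clock arguments.
\item Whenever $\Delta E_n^\delta>0$, one has $E_n^\delta\leq E_n\leq E_{n+1}^\delta$. Then $\Delta B_n$ contains $B(E_n)-B(E_n^\delta)$, which is correlated with $Y_n$. So the cross term $\langle Y_n-X_n,g_n\Delta B_n\rangle$ is not centred, and the expansion does not yield a martingale structure.
\end{itemize}
The paper never compares two numerical solutions. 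Its auxiliary process $\widehat X$ in \eqref{eq:fd-lift} uses the scheme's own coefficients $h_{\Delta,n},f_n,g_n$ with the exact drivers. Hence the one-sided condition is applied only to untamed coefficients, in the continuous comparison $X-\widehat X$. The taming bias sits in the residuals of \Cref{lem:residual}. The remaining difference $\widehat X(t_n)-X_n$ is the explicit identity \eqref{eq:lift-identity}. That identity needs no stability estimate or Gronwall step, only the partition comparison described above.
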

The detailed proof is given in  \Cref{sec:proof-convergence}.

\begin{remark}\label{rem:balanced-clock}
\Cref{thm:fully-discrete} also guides the choice of the subordinator sampling step.
For Lipschitz time coefficients and Lipschitz initial history, the two contributions to the error bound are $C\omega_\Delta^{1/2}$ and $C\sqrt{\delta(1+|\log\delta|)}$.
Choosing $\delta\asymp\Delta^\alpha(1+|\log\Delta|)^{-\alpha}$ balances these contributions and gives the nodal strong error bound $C\omega_\Delta^{1/2}$.
Since $\alpha<1$, this choice allows $\delta$ to be larger than $\Delta$ for sufficiently small $\Delta$, so the subordinator can be sampled on a coarser mesh than the time grid.
\end{remark}

\subsection{Sharpness and matching lower bound}\label{sec:main-lower-bound}

A fundamental question in numerical SDE analysis is whether the established strong convergence rates are optimal.
The following theorem establishes a matching terminal root-mean-square lower bound of order $c(\Delta^{\alpha/2}+\delta^{1/2})$ for an admissible scalar TCSDDE with multiplicative noise driven by an $\alpha$-stable subordinator.

\begin{theorem}\label{thm:lower-bound}
Let $D$ have Laplace exponent $\lambda^\alpha$, $0<\alpha<1$, and let $T>0$ be fixed.
In \eqref{eq:model}, take $h=f=0$, $g(t,u,x,y)=x$, and $\xi=1$ on $[-\tau,0]$.
For the ETEM scheme \eqref{eq:fd-scheme} on the aligned meshes $\Delta=\tau/J$, $N=T/\Delta$, there are $c>0$, $\Delta_0>0$, and $\delta_0>0$, depending only on $\alpha,T$, such that
\begin{equation*}
    \left(\EE[|X(T)-X_N|^2]\right)^{1/2}\geq c(\Delta^{\alpha/2}+\delta^{1/2}),\qquad 0<\Delta=T/N\leq\Delta_0,\quad 0<\delta\leq\delta_0.
\end{equation*}
The two steps need not satisfy a relation.
Since this equation ignores the delayed state, the same recursion and lower bound also hold for arbitrary $\Delta=T/N$.
\end{theorem}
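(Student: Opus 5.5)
The plan is to fix a path of $D$, compute everything under $\PP_B$ using the explicit structure of the equation, and average over $D$ only at the end. With $h=f=0$, $g=x$ and $\xi=1$, the exact solution is $X(T)=\exp\{B(E(T))-E(T)/2\}$. The scheme reduces to
\begin{equation*}
X_{n+1}=X_n\Bigl(1+\frac{\Delta B_n}{\sqrt{\mathcal D_n}}\Bigr),\qquad \mathcal D_n=1+\sqrt{\Delta E_n^\delta}\,|X_n|^2 .
\end{equation*}
The delayed argument plays no role, which is why the final sentence of \Cref{thm:lower-bound} holds. Introduce the intermediate process $\bar X(t)=\exp\{B(E^\delta(t))-E^\delta(t)/2\}$, so that $\bar X_n:=\bar X(t_n)$ is the exact geometric Brownian motion sampled on the discrete clock. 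Let $\mathcal H=\sigma(D,\,B(E^\delta_k),\,0\leq k\leq N)$. Then $X_N$ and $\bar X_N$ are $\mathcal H$-measurable. Also
\begin{equation*}
X(T)-\bar X_N=\bar X_N\bigl(e^{\,B(E(T))-B(E_N^\delta)-r/2}-1\bigr),\qquad r=E(T)-E^\delta(T)\in[0,\delta],
\end{equation*}
where the Brownian increment is independent of $\mathcal H$ given $D$ and the bracket has conditional mean zero. Hence $\EB|X(T)-X_N|^2=\EB|X(T)-\bar X_N|^2+\EB|\bar X_N-X_N|^2$, and I will bound the two pieces from below separately.

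For the sampling piece, $\EB|X(T)-\bar X_N|^2=\EB[\bar X_N^2](e^{r}-1)=e^{E^\delta(T)}(e^r-1)\geq r$. It remains to show $\ED[r]\geq c\delta$. Since $E(T)=T^\alpha D(1)^{-\alpha}$ has a continuous positive density bounded below on a compact interval, the position of $E(T)$ inside its $\delta$-cell is asymptotically uniform. I expect this to give $\ED[r]\geq\delta/4$ for $\delta\leq\delta_0$, by a Riemann-sum argument on that density.

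For the time-stepping piece, set $e_n=\bar X_n-X_n$ and write
\begin{equation*}
e_{n+1}=e_n e^{\Delta B_n-\Delta E^\delta_n/2}+\ell_n,\qquad \ell_n=X_n\bigl[e^{\Delta B_n-\Delta E_n^\delta/2}-1-\Delta B_n\bigr]+X_n\Delta B_n\bigl(1-\mathcal D_n^{-1/2}\bigr).
\end{equation*}
Conditionally on step $n$, $\ell_n$ has mean zero, and the two brackets are orthogonal because $\EB[(e^{\Delta B-\Delta E/2}-1-\Delta B)\Delta B]=\Delta E-\Delta E=0$. Expanding the square, the cross term with $e_n$ is
\begin{equation*}
e_nX_n\,\EB\bigl[e^{\Delta B-\Delta E/2}\,(\ldots)\bigr],
\end{equation*}
which is not obviously zero. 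I will handle it by computing it exactly: it equals $e_nX_n(e^{\Delta E}-1-\Delta E)$ plus a taming term. Alternatively, I will avoid it by redoing the recursion on the multiplicative errors $e_n/\bar X_n$, which form a martingale-difference structure under the measure tilted by $\bar X$.

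This cross-term control is the step I expect to be the main obstacle. The aim is
\begin{equation*}
\EB e_{N}^2\geq c\sum_n\EB[X_n^2]\,(e^{\Delta E^\delta_n}-1-\Delta E^\delta_n)\geq \tfrac c2\sum_n(\Delta E_n^\delta)^2 ,
\end{equation*}
using $\EB[X_{n+1}^2]=\EB[X_n^2(1+\Delta E_n^\delta/\mathcal D_n)]\geq\EB[X_n^2]\geq1$. If the exact cross term is not signed, I would restrict to a $\PP_D$-event where $E(T)\leq M$ and absorb it via $|e_n|\leq \bar X_n+|X_n|$ and second moments bounded by $e^{CM}$.

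Finally, average $\sum_n(\Delta E_n^\delta)^2$ over $D$, splitting into two cases.
\begin{itemize}[leftmargin=*]
\item If $\delta\geq\Delta^\alpha$, the sampling piece already gives $c\delta\geq c'(\Delta^\alpha+\delta)$.
\item If $\delta\leq\epsilon\Delta^\alpha$, use $(\Delta E_n^\delta)^2\geq\frac12\Delta E_n^2-\delta^2$ from \eqref{eq:clock-bracket}. For the stable clock, $\ED[\Delta E_n^2]\geq c\Delta^{1+\alpha}$ uniformly for $t_n\in[T/2,T]$, by \Cref{rem:inverse-comparison} and self-similarity. Summing over about $N/2$ nodes gives at least $c\Delta^\alpha-N\delta^2$.
\end{itemize}
The term $N\delta^2$ is not small when $\delta\gg\Delta$, so in the second case I would instead count only the cells containing a genuine movement of $E$ of size at least $2\delta$. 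Alternatively, I would use $\sum_n(\Delta E_n^\delta)^2\geq\delta\cdot\#\{n:\Delta E_n^\delta\geq\delta\}$ together with the fact that $E^\delta$ is a subsampling of $E$ on its own jump lattice. Tuning $\epsilon$ and $\Delta_0,\delta_0$ then yields $\EE|X(T)-X_N|^2\geq c(\Delta^\alpha+\delta)$, which is the claim after taking square roots.
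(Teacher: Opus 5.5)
Your orthogonal split $\EB|X(T)-X_N|^2=\EB|X(T)-\bar X_N|^2+\EB|\bar X_N-X_N|^2$ is correct. The sampling piece $e^{E_N^\delta}(e^r-1)=e^{E_N}-e^{E_N^\delta}\geq r$, combined with equidistribution of $E(T)/\delta$ modulo $1$, is exactly what the paper does.

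\textbf{First gap: the time-stepping piece is not established.} Writing $e_{n+1}=e_ne^{W_n}+\ell_n$ with $W_n=\Delta B_n-\Delta E_n^\delta/2$ creates the unsigned cross term $2\EB[e_nX_n\{(e^{\Delta E_n^\delta}-1-\Delta E_n^\delta)+(1-\mathcal D_n^{-1/2})\Delta E_n^\delta\}]$. Your fallback cannot absorb it. With $|e_n|\leq\bar X_n+|X_n|$ and moments bounded by $e^{CM}$, this term has size $e^{CM}[(\Delta E_n^\delta)^2+(\Delta E_n^\delta)^{3/2}]$, no smaller than the main term $\tfrac12(\Delta E_n^\delta)^2$ you want to keep. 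The tilted-measure variant is not worked out.

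The obstacle comes from the factor $e^{W_n}$ placed in front of $e_n$. Both $\bar X_n$ and $X_n$ are $\PP_B$-martingales in $n$, so $e_n$ has orthogonal increments. With $\mathcal F_n=\sigma(D,B(E_k^\delta),k\leq n)$,
\[
\EB\bigl[(e_{n+1}-e_n)^2\mid\mathcal F_n\bigr]=\Delta E_n^\delta\bigl(\bar X_n-X_n\mathcal D_n^{-1/2}\bigr)^2+\bar X_n^2\bigl(e^{\Delta E_n^\delta}-1-\Delta E_n^\delta\bigr).
\]
Since $\EB\bar X_n^2=e^{E_n^\delta}\geq1$, this gives $\EB e_N^2\geq\tfrac12\sum_n(\Delta E_n^\delta)^2$. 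This is the paper's It\^o-isometry identity for $\int_0^{E_N}(\mathcal X(u)-G(u))\dd B(u)$.

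\textbf{Second gap: averaging over $D$.} The bound $(\Delta E_n^\delta)^2\geq\tfrac12(\Delta E_n)^2-\delta^2$ loses $N\delta^2=T\delta^2/\Delta$. Under $\delta\leq\epsilon\Delta^\alpha$ this is only bounded by $T\epsilon^2\Delta^{2\alpha-1}$, which is not $o(\Delta^\alpha)$ since $\alpha<1$. Your suggested repairs do not close this. The bound $\sum_n(\Delta E_n^\delta)^2\geq\delta\sum_n\Delta E_n^\delta=\delta E_N^\delta$ is only of order $\delta$, useless precisely when $\delta\ll\Delta^\alpha$. The cell-counting idea is not carried out.

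The missing step is a perturbation that is linear in $\delta$ and weighted by the increments, so that it telescopes. Since $|\Delta E_n-\Delta E_n^\delta|\leq\delta$,
\[
\Bigl|\sum_{n=0}^{N-1}(\Delta E_n)^2-\sum_{n=0}^{N-1}(\Delta E_n^\delta)^2\Bigr|\leq\delta\sum_{n=0}^{N-1}(\Delta E_n+\Delta E_n^\delta)=\delta(E_N+E_N^\delta)\leq2\delta E_N .
\]
Hence, if $\EE\sum_n(\Delta E_n)^2\geq c\Delta^\alpha$, any $\delta\leq c\Delta^\alpha/(4\EE[E_N])$ keeps $\EE\sum_n(\Delta E_n^\delta)^2\geq\tfrac c2\Delta^\alpha$. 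The complementary regime is covered by the sampling piece alone.

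With these two repairs, your outline becomes the paper's proof: the orthogonal split, equidistribution, the stable-clock bound $\EE[(\Delta E_n)^2]\gtrsim\Delta^{1+\alpha}$ on $[T/2,T]$, and the two regimes.
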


The terminal lower bound establishes the sharpness of both polynomial exponents $\Delta^{\alpha/2}$ and $\delta^{1/2}$ for this Euler scheme over the admissible equation class, up to logarithmic factors.
Each exponent is constrained separately, since the other step can be taken arbitrarily smaller.
The complete proof of \Cref{thm:lower-bound} is given in \Cref{sec:proof-lower-bound}.

\section{Strong Convergence Analysis}\label{sec:convergence-analysis}

This section presents the proofs of the moment bounds, strong convergence, and lower bound stated in \Cref{sec:method-results}.
The strong convergence proof uses an auxiliary continuous process and a comparison of the partitions generated by the exact and approximate inverse subordinators.

\subsection{Proof of the moment bounds}\label{sec:proof-moments}

We first establish the uniform moment bounds of the ETEM method asserted in \Cref{thm:moments}.
The proof combines a one-step Lyapunov estimate with recursive moment estimates on the delay intervals and a Brownian integral representation.

\begin{lemma}\label{lem:onestep}
Under \Cref{ass:polynomial,ass:current}, for every $p\geq2$, the ETEM solution \eqref{eq:fd-scheme} satisfies
\begin{align*}
    \EB[(1+|X_{n+1}|^2)^{p/2}] &\leq \EB[(1+|X_n|^2)^{p/2}] +C\bigl((\Delta+\Delta E_n^\delta)+(\Delta+\Delta E_n^\delta)^{p/2}\bigr)\\
    &\quad\times \bigl[1+\EB[(1+|X_n|^2)^{p/2}]+\EB[|X_{n-J}|^{(\ell+1)p}]\bigr].
\end{align*}
\end{lemma}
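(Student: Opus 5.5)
We will prove \Cref{lem:onestep} by a conditional one-step Taylor expansion of the Lyapunov function $V(x)=(1+|x|^2)^{p/2}$, computed under $\PP_B$ with the subordinator path fixed. The quantities $\Delta E_n^\delta$ and $\Delta$ are then deterministic. Write
\begin{equation*}
X_{n+1}=X_n+\zeta_n,\qquad \zeta_n=a_n+g_n\Delta B_n,\qquad a_n=\Delta h_{\Delta,n}+\Delta E_n^\delta f_n .
\end{equation*}
Then
\begin{equation*}
1+|X_{n+1}|^2=(1+|X_n|^2)(1+\kappa_n),\qquad \kappa_n=\frac{2\ip{X_n}{\zeta_n}+|\zeta_n|^2}{1+|X_n|^2}.
\end{equation*}
We will use the elementary inequality $(1+\kappa)^{p/2}\leq 1+\tfrac p2\kappa+C_p(\kappa^2+|\kappa|^{p/2})$ for $\kappa\geq-1$. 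It gives
\begin{equation*}
V(X_{n+1})\leq V(X_n)+V(X_n)\Bigl[\tfrac p2\kappa_n+C(\kappa_n^2+|\kappa_n|^{p/2})\Bigr].
\end{equation*}
We then take $\EB$ conditionally on $X_0,\dots,X_n$. This is legitimate because $\Delta B_n$ is independent of these states, centered, and has covariance $\Delta E_n^\delta I_m$.

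\textbf{First-order term.} After conditioning, the first-order term gives
\begin{equation*}
\tfrac p2 V(X_n)(1+|X_n|^2)^{-1}\Bigl[2\ip{X_n}{a_n}+|a_n|^2+\Delta E_n^\delta|g_n|^2\Bigr].
\end{equation*}
The pieces are handled as follows.
\begin{itemize}
\item For the $h$ part, use the one-sided bound of \Cref{ass:current} in $x$ with $x'=0$. This gives $\ip{x}{h(t,u,x,y)}\leq K_1|x|^2+|x||h(t,u,0,y)|$. Then $|h(t,u,0,y)|\leq C(1+|y|^{\ell+1})$ by \eqref{eq:growth}. Dividing by $1+\sqrt\Delta|h|$ keeps the sign structure, since $1/(1+\sqrt\Delta|h|)\le1$ and the positive part is bounded the same way.
\item For the $f,g$ part, use the dissipativity inequality displayed before \eqref{eq:driftstep}: $\ip{X_n}{f_n}+p|g_n|^2\leq C(1+|X_n|^2+|X_{n-J}|^{2(\ell+1)})$. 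The correction $|a_n|^2$ is bounded by $2\Delta^2|h_{\Delta,n}|^2+2(\Delta E^\delta_n)^2|f_n|^2$. Here $\Delta|h_{\Delta,n}|\leq\sqrt\Delta$ times $\sqrt\Delta|h|$ tamed, so $|a_n|^2$ is at most $C\Delta\,|h|\sqrt\Delta/(1+\sqrt\Delta|h|)\cdot|h|$. It is cleaner to bound $|a_n|^2\le C(\Delta+\Delta E_n^\delta)(|h_{\Delta,n}|\sqrt\Delta+|f_n|\sqrt{\Delta E_n^\delta})$ by \eqref{eq:driftstep}, and then use $|h_{\Delta,n}|\sqrt\Delta\le1$ and $|f_n|\sqrt{\Delta E^\delta_n}\le1$.
\end{itemize}
These give a contribution $C(\Delta+\Delta E_n^\delta)V(X_n)\bigl[1+|X_{n-J}|^{2(\ell+1)}/(1+|X_n|^2)\bigr]$. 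Young's inequality $V(X_n)^{(p-2)/p}|X_{n-J}|^{2(\ell+1)}\leq V(X_n)+|X_{n-J}|^{(\ell+1)p}$ turns this into the asserted form $C(\Delta+\Delta E_n^\delta)[1+V(X_n)+|X_{n-J}|^{(\ell+1)p}]$.

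\textbf{Higher-order terms.} These are the main obstacle, because superlinear $f,g$ must be controlled solely through the taming. The key point is that $|\zeta_n|\le|a_n|+|g_n||\Delta B_n|$. Here $|a_n|\le\sqrt{2(\Delta+\Delta E^\delta_n)}$ by \eqref{eq:driftstep}, and $\EB|g_n\Delta B_n|^{q}\le C|g_n|^q(\Delta E_n^\delta)^{q/2}\le C(\Delta E_n^\delta)^{q/4}$ by \eqref{eq:diffusionstep}. This last bound alone loses half an order, which is fatal for the $\kappa^2$ term.

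We therefore keep one factor of $|g_n|^2$ untamed against $|X_n|$. Expand $\kappa_n^2\le C\bigl(\ip{X_n}{\zeta_n}^2+|\zeta_n|^4\bigr)/(1+|X_n|^2)^2$. Then
\begin{equation*}
\EB\ip{X_n}{g_n\Delta B_n}^2\le|X_n|^2|g_n|^2\Delta E_n^\delta\le|X_n|^2\Delta E_n^\delta\,|g(X_n)|^2 .
\end{equation*}
To bound the right-hand side we must use the $g$-part of \Cref{ass:current} with a larger $p$. With $x'=0$, it yields $|g(x,y)|^2\le C(1+|x|^2)+C|y|^{2(\ell+1)}-\tfrac{2}{p-1}\ip{x}{f(x,y)-f(0,y)}$.

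We will therefore group the $\kappa_n^2$ contribution of $g$ together with the first-order dissipative term. That is, we apply the first-order estimate with $p$ replaced by a sufficiently large multiple ($p|g_n|^2$ appears there precisely to absorb $\tfrac{p(p-2)}{2}\,\ip{X_n}{g_n\Delta B_n}^2/(1+|X_n|^2)$). This is how the constant $p-1$ in \Cref{ass:current} is matched.

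The remaining terms involve $|\zeta_n|^4$ and $|\kappa_n|^{p/2}$ (for $p>4$) and include the factor $(1+|X_n|^2)^{-1}$. For these, use $|\zeta_n|^2/(1+|X_n|^2)\le$ a tamed quantity together with the Gaussian moments above, taking the $(\Delta+\Delta E_n^\delta)^{p/2}$ term as the crude bound. Finally, take the full $\EB$ expectation to obtain the stated recursion.
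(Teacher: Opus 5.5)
Your route differs in bookkeeping from the paper's. You expand the scalar map $\kappa\mapsto(1+\kappa)^{p/2}$ and compute only $\EB[\kappa_n]$ exactly. The paper instead Taylor-expands $V$ to third order, so that every term odd in $\Delta B_n$ vanishes, and keeps a fourth-order remainder. Several of your steps are sound:
\begin{itemize}
\item the first-order computation and the Young step;
\item the absorption of the quadratic Gaussian piece $\ip{X_n}{g_n\Delta B_n}^2$ into the coupled dissipativity with an enlarged weight. This is legitimate because \Cref{ass:current} holds for every $p$ and the common denominator $\mathcal D_n$ preserves it. It is the scalar counterpart of the paper's Hessian term $\frac{p(p-1)}{2}(1+|X_n|^2)^{p/2-1}\Delta E_n^\delta|g_n|^2$.
\end{itemize}

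\textbf{The gap is in the remainder.} The term $V(X_n)|\kappa_n|^{p/2}=\bigl|2\ip{X_n}{\zeta_n}+|\zeta_n|^2\bigr|^{p/2}$ produces $|\ip{X_n}{g_n\Delta B_n}|^{p/2}$. Its conditional mean equals $c_p|g_n^\top X_n|^{p/2}(\Delta E_n^\delta)^{p/4}$. With \eqref{eq:diffusionstep}, the only control you invoke, this is bounded only by $C|X_n|^{p/2}(\Delta E_n^\delta)^{p/8}$. That is not $O(\Delta+\Delta E_n^\delta)$ for $p<8$. For $p\leq4$ you could drop $|\kappa|^{p/2}$, but for $4<p<8$ your inequality needs it.

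This is the same half-order loss you called fatal for $\kappa_n^2$. Here dissipativity cannot absorb it, because the Gaussian factor does not enter quadratically. Replacing $g_n$ by the growth bound \eqref{eq:growth} instead brings in moments of $X_n$ of order $p(\ell+2)/2>p$, which are not on the right-hand side of the lemma. Relatedly, $|\zeta_n|^2/(1+|X_n|^2)$ is not bounded pathwise by any tamed quantity, since $\Delta B_n$ is unbounded.

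The Gaussian factor must appear either squared, where it is absorbed, or with power at least four, where $(\Delta E_n^\delta|g_n|^2)^2\leq\Delta E_n^\delta$. In your framework you can achieve this with
\[
(1+\kappa)^{p/2}\leq1+\tfrac p2\kappa+C\bigl(\kappa^2+|\kappa|^{\max\{4,p/2\}}\bigr),\qquad \kappa\geq-1.
\]
Then $V(X_n)\kappa_n^4\leq C(1+|X_n|^2)^{p/2-2}|\zeta_n|^4+C(1+|X_n|^2)^{p/2-4}|\zeta_n|^8$. Both pieces have conditional means $O(\Delta+\Delta E_n^\delta)$ when $\Delta+\Delta E_n^\delta\leq1$. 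The paper gets the same effect from $|R_4|\leq C(1+|X_n|^{p-4}+|\zeta_n|^{p-4})|\zeta_n|^4$ for $p\geq4$.

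\textbf{You also need a separate large-step case,} which your proposal does not make. Since $\Delta E_n^\delta$ is random and unbounded, the drift parts of the remainder matter. For example, $V(X_n)\kappa_n^2$ contains $(1+|X_n|^2)^{p/2-2}|a_n|^4$. By \eqref{eq:driftstep} this can be of size $(\Delta+\Delta E_n^\delta)^2$ with $|X_n|$ bounded. That is not dominated by $C[(\Delta+\Delta E_n^\delta)+(\Delta+\Delta E_n^\delta)^{p/2}]$ when $2<p<4$ and $\Delta+\Delta E_n^\delta$ is large. After the repair above, $|a_n|^8$ causes the same problem for $p<8$.

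The paper handles this in two regimes:
\begin{itemize}
\item It restricts the expansion to $\Delta+\Delta E_n^\delta\leq1$, where all these powers are at most $C(\Delta+\Delta E_n^\delta)$.
\item For $\Delta+\Delta E_n^\delta>1$ it uses the crude bound $V(X_{n+1})\leq CV(X_n)+C|a_n|^p+C|g_n\Delta B_n|^p$, together with $\EB[|g_n\Delta B_n|^p]\leq C(\Delta E_n^\delta)^{p/4}$. It then writes $C\,\EB[V(X_n)]\leq\EB[V(X_n)]+C(\Delta+\Delta E_n^\delta)\EB[V(X_n)]$.
\end{itemize}

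With these two repairs your scalar argument goes through. It is somewhat leaner than the paper's: it needs no third derivatives of $V$ and no cancellation beyond $\EB[\kappa_n]$. The price is a crude constant in front of $\kappa_n^2$, which forces you to use \eqref{eq:energy} with a weight larger than $p$.
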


\begin{proof}
We use the coefficient notation in \eqref{eq:fd-coefficients}.
The discrete ETEM update \eqref{eq:fd-scheme} then reads
\begin{equation*}
    X_{n+1} = X_n + \Delta h_{\Delta,n} + \Delta E_n^\delta f_n + g_n\Delta B_n.
\end{equation*}

We first consider the case $\Delta+\Delta E_n^\delta \leq 1$.
Let $V(x)=(1+|x|^2)^{p/2}$ and write the increment $\Delta X_n:=X_{n+1}-X_n$.
Expanding $V(X_{n+1})=V(X_n+\Delta X_n)$ at $X_n$ to third order yields
\begin{equation*}
    V(X_{n+1}) = V(X_n) + \nabla V(X_n)\cdot\Delta X_n + \tfrac{1}{2}\Delta X_n^\top \nabla^2 V(X_n)\Delta X_n + \tfrac{1}{6}\nabla^3 V(X_n)[\Delta X_n]^3 + R_4,
\end{equation*}
where $R_4$ is the fourth-order Taylor remainder.
Since $\Delta B_n$ has mean zero and is independent of $(X_n,X_{n-J})$, all terms containing an odd number of copies of $\Delta B_n$ vanish under $\EB[\cdot]$.
For the leading first-order drift and second-order diffusion terms, using \eqref{eq:energy} and Young's inequality, we obtain
\begin{align}
    &\quad \EB\bigl[\nabla V(X_n)\cdot(\Delta h_{\Delta,n}+\Delta E_n^\delta f_n) + \tfrac{1}{2}(g_n\Delta B_n)^\top \nabla^2 V(X_n)(g_n\Delta B_n)\bigr]\notag\\
    &\leq p\,\EB\left[(1+|X_n|^2)^{p/2-1}\left(\ip{X_n}{\Delta h_{\Delta,n}+\Delta E_n^\delta f_n} + \frac{p-1}{2}\Delta E_n^\delta|g_n|^2\right)\right]\notag\\
    &\leq C(\Delta+\Delta E_n^\delta)\,\EB\bigl[(1+|X_n|^2)^{p/2-1}\bigl(1+|X_n|^2+|X_{n-J}|^{2(\ell+1)}\bigr)\bigr]\notag\\
    &\leq C(\Delta+\Delta E_n^\delta)\,\bigl[1 + \EB[(1+|X_n|^2)^{p/2}] + \EB[|X_{n-J}|^{(\ell+1)p}]\bigr].\label{eq:onestep-leading}
\end{align}
By \eqref{eq:driftstep}, the deterministic second-order term satisfies
\begin{equation}\label{eq:onestep-second}
    \tfrac{1}{2}\EB\bigl[|(\Delta h_{\Delta,n}+\Delta E_n^\delta f_n)^\top \nabla^2 V(X_n)(\Delta h_{\Delta,n}+\Delta E_n^\delta f_n)|\bigr] \leq C(\Delta+\Delta E_n^\delta)\EB[(1+|X_n|^2)^{p/2}].
\end{equation}
By \eqref{eq:driftstep} and \eqref{eq:diffusionstep}, since $|\nabla^3V(X_n)| \leq C(1+|X_n|^{(p-3)\vee0}) \leq C(1+|X_n|^2)^{p/2}$, the non-vanishing third-order terms are bounded by
\begin{align}
    &\quad C\,\EB\bigl[(1+|X_n|^{(p-3)\vee0})\bigl(|\Delta h_{\Delta,n}+\Delta E_n^\delta f_n|^3 + |\Delta h_{\Delta,n}+\Delta E_n^\delta f_n|\Delta E_n^\delta|g_n|^2\bigr)\bigr]\notag\\
    &\leq C(\Delta+\Delta E_n^\delta)\EB[(1+|X_n|^2)^{p/2}].\label{eq:onestep-third}
\end{align}
For the remainder $R_4$, when $p\geq4$, applying \eqref{eq:driftstep}, \eqref{eq:diffusionstep}, and the Gaussian moment bound gives
\begin{align}
    \EB[|R_4|]
    &\leq C\,\EB\bigl[(1+|X_n|^{p-4})\bigl(|\Delta h_{\Delta,n}+\Delta E_n^\delta f_n|^4+(\Delta E_n^\delta|g_n|^2)^2\bigr)\notag\\
    &\hspace{13mm} + |\Delta h_{\Delta,n}+\Delta E_n^\delta f_n|^p + (\Delta E_n^\delta|g_n|^2)^{p/2}\bigr]\notag\\
    &\leq C(\Delta+\Delta E_n^\delta)\EB[(1+|X_n|^2)^{p/2}].\label{eq:onestep-remainder}
\end{align}
When $2\leq p<4$, the fourth-order derivatives $\nabla^4 V$ are globally bounded, which gives $\EB[|R_4|] \leq C(\Delta+\Delta E_n^\delta)$.
Taking $\EB$ in the Taylor expansion of $V(X_{n+1})$ and substituting \eqref{eq:onestep-leading}--\eqref{eq:onestep-remainder} proves the assertion for $\Delta+\Delta E_n^\delta\leq1$.

Next, we consider the case $\Delta+\Delta E_n^\delta > 1$.
Applying the inequality $(1+|u+v+w|^2)^{p/2} \leq C(1+|u|^2)^{p/2} + C|v|^p + C|w|^p$ directly to the ETEM update, together with $|\Delta h_{\Delta,n}+\Delta E_n^\delta f_n|\leq\sqrt{2(\Delta+\Delta E_n^\delta)}$ from \eqref{eq:driftstep} and the Gaussian moment bound $\EB[|g_n\Delta B_n|^p]\leq C\EB[(\Delta E_n^\delta|g_n|^2)^{p/2}]\leq C(\Delta+\Delta E_n^\delta)^{p/4}\leq C(\Delta+\Delta E_n^\delta)^{p/2}$, yields
\begin{align}
    \EB[(1+|X_{n+1}|^2)^{p/2}]
    &\leq C\bigl[\EB[(1+|X_n|^2)^{p/2}] + (\Delta+\Delta E_n^\delta)^{p/2}\bigr]\notag\\
    &\leq \EB[(1+|X_n|^2)^{p/2}] + C\bigl((\Delta+\Delta E_n^\delta)+(\Delta+\Delta E_n^\delta)^{p/2}\bigr)\notag\\
    &\quad\times \bigl[1 + \EB[(1+|X_n|^2)^{p/2}] + \EB[|X_{n-J}|^{(\ell+1)p}]\bigr].\label{eq:onestep-large}
\end{align}
Combining the estimates for $\Delta+\Delta E_n^\delta\leq1$ with \eqref{eq:onestep-large} for $\Delta+\Delta E_n^\delta>1$ completes the proof for every $p\geq2$.
\end{proof}

\begin{proof}[Proof of \Cref{thm:moments}]
It is straightforward to verify that for every fixed $p\geq2$ and all $a>0$,
\begin{equation}\label{eq:log}
    \log(1+C(a+a^{p/2}))\leq Ca.
\end{equation}
Thus, we have
\begin{align}\label{eq:cell-product}
    \prod_{n=0}^{N-1}\left[1+C\bigl((\Delta+\Delta E_n^\delta) +(\Delta+\Delta E_n^\delta)^{p/2}\bigr)\right]&\leq e^{C(T+E_N^\delta)}\notag\\
    &\leq e^{CA(T)}.
\end{align}

Fix the interval $[(k-1)\tau,k\tau]$, corresponding to $(k-1)J\leq n<kJ$.
Suppose the moments of the numerical solution of every finite order have been bounded up to $(k-1)\tau$.
Since $n-J<(k-1)J$, the delayed states belong to $[-\tau,(k-1)\tau]$.
Iterating \Cref{lem:onestep} across $(k-1)J\leq i\leq n$ yields, for $(k-1)J\leq n<kJ$,
\begin{align*}
    &\quad \EB[|X_{n+1}|^p]\\
    &\leq 1+\EB[(1+|X_{n+1}|^2)^{p/2}]+\max_{-J\leq j\leq(k-1)J}\EB[|X_j|^{(\ell+1)p}]\\
    &\leq [1+C((\Delta+\Delta E_n^\delta)+(\Delta+\Delta E_n^\delta)^{p/2})]\left(1+\EB[(1+|X_n|^2)^{p/2}]+\max_{-J\leq j\leq(k-1)J}\EB[|X_j|^{(\ell+1)p}]\right)\\
    &\leq \prod_{i=(k-1)J}^n\left[1+C\bigl((\Delta+\Delta E_i^\delta)+(\Delta+\Delta E_i^\delta)^{p/2}\bigr)\right]\\
    &\quad\times\left(1+\EB[(1+|X_{(k-1)J}|^2)^{p/2}]+\max_{-J\leq j\leq(k-1)J}\EB[|X_j|^{(\ell+1)p}]\right)\\
    &\leq Ce^{CA(T)},
\end{align*}
where the last inequality follows from \eqref{eq:cell-product} and the moment bounds established on $[-\tau,(k-1)\tau]$.
Taking the maximum over $(k-1)J\leq n\leq kJ$ gives $\max_{(k-1)J\leq n\leq kJ}\EB[|X_n|^p]\leq Ce^{CA(T)}$.
Since the initial history on $[-\tau,0]$ is bounded by \Cref{ass:history}, repeating this argument across successive delay intervals up to $T$ yields
\begin{equation*}
    \max_{-J\leq n\leq N}\EB[|X_n|^p]\leq Ce^{CA(T)}.
\end{equation*}
By \eqref{eq:growth} and the fact that taming decreases coefficient norms, this also bounds the moments of $h_{\Delta,n},f_n,g_n$ and the original coefficients uniformly in $n$.

On $(E_n^\delta,E_{n+1}^\delta]$, define the Brownian integrand indexed by $u$ to be $g_n$, with empty intervals omitted.
This integrand is predictable, since $g_n$ is known at $E_n^\delta$.
Applying H\"older's inequality to the drift sums and the BDG inequality to the Brownian integral in \eqref{eq:fd-scheme}, we obtain
\begin{align*}
    &\quad\EB\left[\max_{0\leq j\leq N}|X_j|^p\right]\\*
    &\leq C+CA(T)^{p-1}\sum_{n=0}^{N-1}\bigl(\Delta\EB[|h_{\Delta,n}|^p]+\Delta E_n^\delta\EB[|f_n|^p]\bigr)\\*
    &\quad+C(E_N^\delta)^{p/2-1}\sum_{n=0}^{N-1}\Delta E_n^\delta\EB[|g_n|^p]\\*
    &\leq C[1+A(T)^p+(E_N^\delta)^{p/2}]e^{CA(T)}\\*
    &\leq Ce^{CA(T)}.
\end{align*}
The diffusion term vanishes when $E_N^\delta=0$.
The same calculation between nodes $i$ and $j$, for $0\leq i\leq j$, gives
\begin{equation*}
    \EB[|X_j-X_i|^p]\leq Ce^{CA(T)}\bigl[(t_j-t_i)^p+(E_j^\delta-E_i^\delta)^p+(E_j^\delta-E_i^\delta)^{p/2}\bigr].
\end{equation*}
The bound $E_j^\delta-E_i^\delta\leq E(T)$ absorbs the linear inverse subordinator increment.
For negative indices, use \Cref{ass:history} and split at zero if necessary.
Taking expectation with respect to $D$ and applying \Cref{lem:inverse} gives the unconditional bound.
\end{proof}

\subsection{Proof of strong convergence}\label{sec:proof-convergence}

The ETEM scheme uses the Brownian increments $B(E_{n+1}^\delta)-B(E_n^\delta)$, whereas the exact solution is driven by $B(E(t))$.
Following \cite{Zuo2026, ZuoGuo2026}, we construct an auxiliary continuous-time process using the coefficients of the ETEM scheme.
This process shares the same drivers as the exact solution, allowing their difference to be estimated using coupled dissipativity.
Its discrepancy from the numerical solution is then controlled by comparing the partitions determined by $E_n$ and $E_n^\delta$.

We define a continuous auxiliary process by setting $\widehat X(t)=\xi(t)$ for $-\tau\leq t\leq0$ and
\begin{equation}\label{eq:fd-lift}
	\widehat X(t)=\xi(0)+\int_0^t h_{\Delta,n(s)}\dd s+\int_0^t f_{n(s)}\dd E(s)+\int_0^t g_{n(s)}\dd B(E(s)),\qquad 0\leq t\leq T.
\end{equation}
Under $\PP_B$, $g_n$ is known at $E_n^\delta\leq E_n$, so the stochastic integrand is predictable.

In the subordinator-grid schemes of \cite{JinKobayashi2019,JinKobayashi2021}, the nodes $\tau_k=D(k\delta)$ satisfy $E(\tau_k)=E^\delta(\tau_k)=k\delta$.
This alignment allows the numerical updates to be represented using the exact clock without comparing two distinct clock partitions.
For the equidistant grid used here, however, $E_n^\delta$ generally differs from $E_n$.
Consequently, the auxiliary reconstruction and the numerical solution may use different sampled coefficients at the same value of the Brownian integration variable $u$.
The following partition comparison controls this mismatch without introducing a factor proportional to the number of time steps.

For the construction of the partitions only, set
\begin{equation*}
    \widehat E_n^\delta=E_n^\delta\quad(0\leq n<N),\qquad \widehat E_N^\delta=E_N.
\end{equation*}
Thus only the last approximate inverse-subordinator cell is extended.
The actual increments $\Delta E_n^\delta$, the coefficients in \eqref{eq:fd-coefficients}, and the recursion \eqref{eq:fd-scheme} are unchanged.
For $0<u\leq E_N$, let $i(u),j(u)\in\{0,\ldots,N-1\}$ satisfy
\begin{equation}\label{eq:partition-indices}
    E_{i(u)}<u\leq E_{i(u)+1},\qquad \widehat E_{j(u)}^\delta<u\leq\widehat E_{j(u)+1}^\delta,
\end{equation}
with zero-length cells omitted.
Values at endpoints have no effect on the integrals below.

\begin{lemma}\label{lem:ordered-partitions}
Let $(E_n)$ and $(E_n^\delta)$ be nondecreasing sequences satisfying $E_0=E_0^\delta=0$ and $0\leq E_k-E_k^\delta\leq\delta$ for $1\leq k\leq N$, with the extension and indices defined above.
For every nondecreasing real sequence $(c_n)_{n=0}^{N-1}$ and $z>0$,
\begin{equation*}
    \int_0^{E_N}|c_{j(u)}-c_{i(u)}|^z\dd u\leq|c_{N-1}-c_0|^z\begin{cases}E_N^{1-z}\delta^z,&0<z\leq1,\\ \delta,&z>1.\end{cases}
\end{equation*}
The set $U=\{u\in(0,E_N]: i(u)\ne j(u)\}$ represents the mismatch region where the exact and approximate partitions assign different time-step indices. 
For $r\in\{1/2,1\}$, we have
\begin{equation*}
    \int_U\bigl[(\Delta E_{i(u)}^\delta)^r+(\Delta E_{j(u)}^\delta)^r\bigr]\dd u\leq C\delta^r E_N,
\end{equation*}
where $C$ is independent of the endpoint sequences, $N$, and $\delta$.
\end{lemma}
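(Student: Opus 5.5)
The plan is to reduce everything to a \emph{shift comparison} between the two index maps, and then to a telescoping integral.

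\textbf{Step 1: index sandwich.} For $u\in(0,E_N]$, write $i=i(u)$ and $j=j(u)$. Since $\widehat E_{j+1}^\delta\leq E_{j+1}$, with equality when $j+1=N$ because of the extension, we get $u\leq E_{j+1}$, hence $i(u)\leq j(u)$. Conversely, $E_j-\delta\leq E_j^\delta=\widehat E^\delta_j<u$ for $j<N$, so $E_j<u+\delta$. Extend $i(\cdot)$ to $(E_N,\infty)$ by $i(v)=N-1$, and set $F(v)=c_{i(v)}$. This $F$ is a nondecreasing step function in $v$, because $i(\cdot)$ is nondecreasing and $(c_n)$ is nondecreasing. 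The sandwich $i(u)\leq j(u)\leq i(u+\delta)$ then gives
\begin{equation*}
0\leq c_{j(u)}-c_{i(u)}\leq F(u+\delta)-F(u).
\end{equation*}
I expect this step to need the most care. The issues are the endpoint conventions for zero-length cells, the role of the extended last cell $\widehat E_N^\delta=E_N$, and what happens when $u+\delta>E_N$. Extending $F$ constantly by $c_{N-1}$ beyond $E_N$ handles the last case.

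\textbf{Step 2: telescoping.} Integrate the bound from Step 1:
\begin{equation*}
\int_0^{E_N}\bigl(F(u+\delta)-F(u)\bigr)\dd u=\int_{E_N}^{E_N+\delta}F-\int_0^\delta F\leq\delta(c_{N-1}-c_0).
\end{equation*}
This is the key point: the estimate does not depend on $N$. For $z>1$, bound $|c_j-c_i|^z\leq(c_{N-1}-c_0)^{z-1}(c_j-c_i)$ and integrate, which gives $\delta|c_{N-1}-c_0|^z$. For $0<z\leq1$, apply H\"older (Jensen) on $(0,E_N]$:
\begin{equation*}
\int_0^{E_N}x^z\,\dd u\leq E_N^{1-z}\Bigl(\int_0^{E_N}x\,\dd u\Bigr)^z,
\end{equation*}
which gives $E_N^{1-z}\delta^z|c_{N-1}-c_0|^z$.

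\textbf{Step 3: mismatch region.} On $U$ we have $j(u)\geq i(u)+1$. Choose two nondecreasing test sequences.
\begin{itemize}[leftmargin=*]
\item For the $i(u)$-weight, take $c_n=E_n^\delta$. Then $c_j-c_i\geq E^\delta_{i+1}-E^\delta_i=\Delta E^\delta_{i(u)}$.
\item For the $j(u)$-weight, take $c_n=E^\delta_{n+1}$. Then $c_j-c_i=E^\delta_{j+1}-E^\delta_{i+1}\geq E^\delta_{j+1}-E^\delta_j=\Delta E^\delta_{j(u)}$, since $i+1\leq j$.
\end{itemize}
In both cases $c_{N-1}-c_0\leq E_N^\delta\leq E_N$. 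Now raise the pointwise bounds to the power $r$ and apply Step 2. With $z=1$ this gives $\int_U\Delta E^\delta\,\dd u\leq\delta E_N$. With $z=1/2$ it gives $E_N^{1/2}\delta^{1/2}E_N^{1/2}=\delta^{1/2}E_N$. Summing the two weights yields the claim with $C=2$.
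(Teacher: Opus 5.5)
Your proof is correct. It follows the paper's route for the first estimate, but it obtains the second estimate differently.

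\textbf{First estimate.} The paper uses the exact identity
\[
c_{j(u)}-c_{i(u)}=\sum_{k=1}^{N-1}(c_k-c_{k-1})\mathbf 1_{(E_k^\delta,E_k]}(u),
\]
which holds because $i(u)<k\le j(u)$ exactly when $E_k^\delta<u\le E_k$. It then integrates term by term. Your sandwich $i(u)\le j(u)\le i(u+\delta)$, followed by telescoping $F(u+\delta)-F(u)$, encodes the same fact: each interior node shifts left by at most $\delta$. You state it as an inequality rather than an identity, and you reach the same bound $\delta(c_{N-1}-c_0)$.

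\textbf{Second estimate.} Here the routes genuinely diverge. The paper runs a separate cell-by-cell count. Inside each exact cell, and inside each extended approximate cell, it bounds the length of $U$ by the minimum of the cell length and $\delta$. It then sums using $(x+\delta)^r\min(x,\delta)\le 2^r\delta^r x$ and $y^r\min(y,\delta)\le\delta^r y$, which gives the constant $2^r+1$.

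You instead use $j(u)\ge i(u)+1$ on $U$. This lets you dominate $\Delta E^\delta_{i(u)}$ by $E^\delta_{j(u)}-E^\delta_{i(u)}$ and $\Delta E^\delta_{j(u)}$ by $E^\delta_{j(u)+1}-E^\delta_{i(u)+1}$. The mismatch integral then becomes a corollary of the first estimate with $z=r$, applied to the monotone sequences $(E_n^\delta)$ and $(E_{n+1}^\delta)$.

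\textbf{Comparison.} Your reduction is shorter and gives $C=2$. It handles the extended last cell once, inside the sandwich, and it holds verbatim for every $r\in(0,1]$. The paper's count shows more explicitly where the mismatch region sits within each partition, but it offers no further advantage for this lemma.
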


\begin{proof}
If $E_N=0$, both integrals vanish.
Assume $E_N>0$.
Since $E_k^\delta\leq E_k$ at every interior endpoint, $i(u)\leq j(u)$.
For $1\leq k<N$, the relation $i(u)<k\leq j(u)$ is equivalent to $E_k^\delta<u\leq E_k$, up to endpoints.
Consequently, we have
\begin{equation*}
    c_{j(u)}-c_{i(u)}=\sum_{k=1}^{N-1}(c_k-c_{k-1})\mathbf1_{(E_k^\delta,E_k]}(u)
\end{equation*}
for almost every $u$.
By monotonicity of $(c_n)$,
\begin{align*}
    \int_0^{E_N}|c_{j(u)}-c_{i(u)}|\dd u&=\sum_{k=1}^{N-1}(c_k-c_{k-1})(E_k-E_k^\delta)\\
    &\leq\delta(c_{N-1}-c_0).
\end{align*}
If $c_{N-1}=c_0$, the first estimate is immediate.
Otherwise, Jensen's inequality yields the estimate for $0<z\leq1$, while $|c_{j(u)}-c_{i(u)}|^z\leq|c_{N-1}-c_0|^{z-1}|c_{j(u)}-c_{i(u)}|$ gives the bound for $z>1$.

To establish the second estimate, first note that for $r\in\{1/2,1\}$ and $x,y\geq0$,
\begin{equation*}
    (x+\delta)^r\min(x,\delta)\leq2^r\delta^rx,\qquad y^r\min(y,\delta)\leq\delta^ry.
\end{equation*}
On an exact cell $(E_i,E_{i+1}]$, a mismatch occurs only if $u>E_{i+1}^\delta$.
Hence its intersection with $U$ has length at most $\min(E_{i+1}-E_i,\delta)$, and $\Delta E_i^\delta\leq E_{i+1}-E_i+\delta$.
On an extended approximate cell $(\widehat E_j^\delta,\widehat E_{j+1}^\delta]$, a mismatch occurs only if $u\leq E_j$.
Its intersection with $U$ therefore has length at most $\min(\widehat E_{j+1}^\delta-\widehat E_j^\delta,\delta)$, and $\Delta E_j^\delta\leq \widehat E_{j+1}^\delta-\widehat E_j^\delta$.
Splitting the integral over the two partitions and applying these bounds, we obtain
\begin{align*}
    &\quad\int_U\bigl[(\Delta E_{i(u)}^\delta)^r+(\Delta E_{j(u)}^\delta)^r\bigr]\dd u\\
    &=\sum_{i=0}^{N-1}(\Delta E_i^\delta)^r\int_{U\cap(E_i,E_{i+1}]}1\dd u+\sum_{j=0}^{N-1}(\Delta E_j^\delta)^r\int_{U\cap(\widehat E_j^\delta,\widehat E_{j+1}^\delta]}1\dd u\\
    &\leq\sum_{i=0}^{N-1}(E_{i+1}-E_i+\delta)^r\min(E_{i+1}-E_i,\delta)+\sum_{j=0}^{N-1}(\widehat E_{j+1}^\delta-\widehat E_j^\delta)^r\min(\widehat E_{j+1}^\delta-\widehat E_j^\delta,\delta)\\
    &\leq2^r\delta^r\sum_{i=0}^{N-1}(E_{i+1}-E_i)+\delta^r\sum_{j=0}^{N-1}(\widehat E_{j+1}^\delta-\widehat E_j^\delta)\\
    &\leq C\delta^rE_N.\qedhere
\end{align*}
\end{proof}

The preceding comparison of the two partitions and the increment bounds for $X_n$ give the following coefficient estimates.
\begin{lemma}\label{lem:partition-coefficients}
Under \Cref{ass:polynomial,ass:current,ass:time,ass:history}, for every $p\geq2$ and $\kappa=\min\{\gamma,\beta_1,\beta_2,1/2\}$, we have
\begin{align*}
    \int_0^{E_N}\left(\EB[|f_{j(u)}-f_{i(u)}|^{p}]\right)^{1/p}\dd u &\leq Ce^{CA(T)}\delta^\kappa,\\
    \int_0^{E_N}\left(\EB[|g_{j(u)}-g_{i(u)}|^{p}]\right)^{2/p}\dd u &\leq Ce^{CA(T)}\delta^{2\kappa}.
\end{align*}
\end{lemma}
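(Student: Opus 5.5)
The plan is to split each coefficient difference into two taming biases and one increment of the untamed coefficient, and to handle each piece with one of the two estimates of \Cref{lem:ordered-partitions}. Outside the mismatch set $U$ we have $i(u)=j(u)$, so both integrands vanish there. For $u\in U$, and with $\varphi(X_n)=\varphi(t_n,E_n^\delta,X_n,X_{n-J})$, I would write
\begin{equation*}
    f_{j}-f_{i}=\bigl(f_j-f(X_j)\bigr)+\bigl(f(X_j)-f(X_i)\bigr)+\bigl(f(X_i)-f_i\bigr),\qquad i=i(u),\ j=j(u),
\end{equation*}
and similarly for $g$.

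\textbf{Taming biases.} By \eqref{eq:tamebias}, $|f(X_n)-f_n|\leq\sqrt{\Delta E_n^\delta}\,P(X_n,X_{n-J})$ with $P$ a polynomial. Hence \eqref{eq:growth}, H\"older's inequality and \Cref{thm:moments} give $(\EB|f(X_n)-f_n|^p)^{1/p}\leq Ce^{CA(T)}(\Delta E_n^\delta)^{1/2}$ uniformly in $n$. For $g$, the squared $L^p$ norm gives the bound $Ce^{CA(T)}\Delta E_n^\delta$. Integrating over $U$ with the second estimate of \Cref{lem:ordered-partitions}, taking $r=1/2$ for $f$ and $r=1$ for $g$, bounds these contributions by $Ce^{CA(T)}\delta^{1/2}E_N$ and $Ce^{CA(T)}\delta E_N$. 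Since $E_N\leq E(T)\leq e^{A(T)}$, both are at most $Ce^{CA(T)}\delta^{\kappa}$ and $Ce^{CA(T)}\delta^{2\kappa}$ respectively.

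\textbf{Untamed increment.} Combining \Cref{ass:polynomial} and \Cref{ass:time} with H\"older's inequality and the moment bounds of \Cref{thm:moments}, for $0\leq i\leq j<N$ I expect
\begin{equation*}
    \bigl(\EB|f(X_j)-f(X_i)|^p\bigr)^{1/p}\leq Ce^{CA(T)}\Bigl[(t_j-t_i)^{\beta_1}+(E_j^\delta-E_i^\delta)^{\beta_2}+\|X_j-X_i\|_{2p}+\|X_{j-J}-X_{i-J}\|_{2p}\Bigr].
\end{equation*}
The increment bound of \Cref{thm:moments}, with $E^\delta$ extended by zero at negative indices, then gives
\begin{equation*}
    \|X_j-X_i\|_{2p}+\|X_{j-J}-X_{i-J}\|_{2p}\leq Ce^{CA(T)}\bigl[(t_j-t_i)+(t_j-t_i)^{\gamma}+(E_j^\delta-E_i^\delta)^{1/2}\bigr],
\end{equation*}
where the shifted delayed increments satisfy $E_{j-J}^\delta-E_{i-J}^\delta\leq E_j^\delta-E_i^\delta$ up to the same monotone structure. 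More directly, each delayed term can be treated with the nondecreasing sequence $c_n=E_{n-J}^\delta$.

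Every term is now of the form $|c_j-c_i|^z$ for a nondecreasing sequence $c_n\in\{t_n,\,E_n^\delta,\,E_{n-J}^\delta\}$ and exponent $z\in\{1,\beta_1,\beta_2,\gamma,1/2\}$. For $g$ the exponents are doubled after squaring. The first estimate of \Cref{lem:ordered-partitions} bounds each such integral by $|c_{N-1}-c_0|^z\max\{E_N^{1-z}\delta^z,\delta\}$. Here $|c_{N-1}-c_0|\leq T+E(T)$ and $E_N\leq E(T)$, so all these prefactors, including negative powers only through $E_N^{1-z}$ with $z\leq1$, are absorbed into $Ce^{CA(T)}$. Since $\delta\leq1$, every resulting power of $\delta$ is at most $\delta^{\kappa}$ for $f$. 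For $g$, the exponents $2\beta_1,2\beta_2,2\gamma,1$ may exceed $1$, in which case the lemma gives $\delta\leq\delta^{2\kappa}$, and otherwise $\delta^{2z}\leq\delta^{2\kappa}$.

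\textbf{Main obstacle.} The key step is justifying the passage from the $u$-integral of $L^p$ norms, which is not a priori of the form $|c_{j(u)}-c_{i(u)}|^z$, to \Cref{lem:ordered-partitions}. I will handle this by first bounding the $L^p$ norm pointwise in $u$ by $Ce^{CA(T)}\sum|c_{j(u)}-c_{i(u)}|^z$, with a constant uniform in $u$ and deterministic under $\PP_B$, and only then integrating in $u$. The last cell is the delicate case: there $\widehat E_N^\delta=E_N$ while $j(u)\leq N-1$, so the indices stay in range and the extension does not affect the coefficients. Care is also needed with the delayed indices that fall in the history, where the Hölder-$\gamma$ term of \Cref{thm:moments} enters; this term is monotone in $t_n$ and is covered by $c_n=t_n$ with $z=\gamma$.
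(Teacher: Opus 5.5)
Your proposal is correct and follows the paper's proof essentially step for step: the same split into two taming biases plus an untamed increment on $U$, and the same integration of the biases via the second estimate of \Cref{lem:ordered-partitions} with $r=1/2$ and $r=1$. It also uses the same pointwise reduction of the untamed increment to terms $|c_{j(u)}-c_{i(u)}|^z$ with $c_n\in\{t_n,E_n^\delta,E_{n-J}^\delta\}$; the only cosmetic difference is that the paper proves the squared bound \eqref{eq:untamed-partitions} once and gets the $f$-estimate by Cauchy--Schwarz, while you apply the $0<z\leq1$ branch of the lemma directly.

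One slip to delete: the aside $E_{j-J}^\delta-E_{i-J}^\delta\leq E_j^\delta-E_i^\delta$ is false in general, since increments of $E^\delta$ over different windows are not comparable. Your displayed bound for $\|X_{j-J}-X_{i-J}\|_{2p}$ must therefore carry the extra term $(E_{j-J}^\delta-E_{i-J}^\delta)^{1/2}$, which your fallback choice $c_n=E_{n-J}^\delta$ (exactly the paper's choice) then handles.
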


\begin{proof}
Let $\varphi\in\{f,g\}$.
For fixed $u$, write $i=i(u)$ and $j=j(u)$, with negative inverse subordinator indices set to zero.
By \Cref{ass:polynomial,ass:time}, H\"older's inequality, and \Cref{thm:moments}, we have
\begin{align*}
    &\quad \left(\EB[|\varphi(X_j)-\varphi(X_i)|^{p}]\right)^{1/p} \\
    &\leq Ce^{CA(T)}\bigl[(t_j-t_i)^{\beta_1}+(E_j^\delta-E_i^\delta)^{\beta_2}+\left(\EB[|X_j-X_i|^{2p}]\right)^{1/(2p)}+\left(\EB[|X_{j-J}-X_{i-J}|^{2p}]\right)^{1/(2p)}\bigr]\\
    &\leq Ce^{CA(T)}\bigl[(t_j-t_i)^{\beta_1}+(E_j^\delta-E_i^\delta)^{\beta_2}+(t_j-t_i)+(t_j-t_i)^\gamma+(E_j^\delta-E_i^\delta)^{1/2}+(E_{j-J}^\delta-E_{i-J}^\delta)^{1/2}\bigr].
\end{align*}
Since $0\leq t_j-t_i\leq T$, it follows that
\begin{align*}
    &\quad\left(\EB[|\varphi(X_j)-\varphi(X_i)|^{p}]\right)^{2/p}\\
    &\leq Ce^{CA(T)}\bigl[(t_j-t_i)^{2\beta_1}+(t_j-t_i)^{2\gamma}+(E_j^\delta-E_i^\delta)^{2\beta_2}+(E_j^\delta-E_i^\delta)+(E_{j-J}^\delta-E_{i-J}^\delta)\bigr].
\end{align*}
We integrate over $[0,E_N]$ and apply \Cref{lem:ordered-partitions} to the monotone sequences $t_n$, $E_n^\delta$, and $E_{n-J}^\delta$.
Their endpoint differences are bounded by $T$, $E_N$, and $E_N$, respectively.
Absorbing polynomial factors in $A(T)$ into $e^{CA(T)}$, we obtain
\begin{equation}\label{eq:untamed-partitions}
    \int_0^{E_N}\left(\EB[|\varphi(X_{j(u)})-\varphi(X_{i(u)})|^{p}]\right)^{2/p}\dd u \leq Ce^{CA(T)}\delta^{2\kappa}.
\end{equation}

By the taming bound \eqref{eq:tamebias} and \Cref{thm:moments}, we have $(\EB[|f_n-f(X_n)|^p])^{1/p}\leq Ce^{CA(T)}\sqrt{\Delta E_n^\delta}$.
Since $i(u)=j(u)$ outside $U$, we have $f_j-f_i = f(X_j)-f(X_i)$ on $[0,E_N]\setminus U$.
Applying Minkowski's inequality, Cauchy--Schwarz, \eqref{eq:untamed-partitions}, and \Cref{lem:ordered-partitions} with $r=1/2$, we obtain
\begin{align*}
	&\quad\int_0^{E_N}(\EB[|f_j-f_i|^p])^{1/p}\dd u\\
	&\leq \int_0^{E_N}(\EB[|f(X_j)-f(X_i)|^p])^{1/p}\dd u + \int_U\bigl((\EB[|f_j-f(X_j)|^p])^{1/p}+(\EB[|f_i-f(X_i)|^p])^{1/p}\bigr)\dd u\\*
	&\leq E_N^{1/2} \left(\int_0^{E_N}(\EB[|f(X_j)-f(X_i)|^p])^{2/p}\dd u\right)^{1/2}+Ce^{CA(T)}\int_U(\sqrt{\Delta E_i^\delta}+\sqrt{\Delta E_j^\delta})\dd u\\*
	&\leq Ce^{CA(T)}\delta^\kappa.
\end{align*}
Similarly, decomposing $g_j-g_i$ and applying \eqref{eq:tamebias}, \eqref{eq:untamed-partitions}, and \Cref{lem:ordered-partitions} with $r=1$, one readily obtains the diffusion bound.
\end{proof}

\begin{proposition}\label{prop:lift-defect}
Under \Cref{ass:polynomial,ass:current,ass:time,ass:history}, for every $p\geq2$, we obtain
\begin{align*}
    \EB\left[\sup_{-\tau\leq t\leq T}|\widehat X(t)|^p\right] &\leq Ce^{CA(T)},\\
    \EB\left[\max_{0\leq n\leq N}|\widehat X(t_n)-X_n|^p\right] &\leq Ce^{CA(T)}\varepsilon_\delta^p.
\end{align*}
\end{proposition}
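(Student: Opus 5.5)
The plan is to prove the moment bound for $\widehat X$ directly from \eqref{eq:fd-lift} and \Cref{thm:moments}. Then I compare $\widehat X(t_n)$ with $X_n$ by writing both as integrals over the same $\dd t$ axis and the same Brownian variable $u$, but with different partitions.

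\textbf{Moment bound.} For $t\in[0,T]$, H\"older's inequality handles the $\dd s$ and $\dd E(s)$ integrals, giving a factor $A(T)^{p-1}\int_0^T(|h_{\Delta,n(s)}|^p\dd s+|f_{n(s)}|^p\dd E(s))$. Since the coefficients are evaluated at the grid points, their $\EB$-moments are bounded by $Ce^{CA(T)}$, by \Cref{thm:moments} together with \eqref{eq:growth} and the fact that taming reduces norms. For the stochastic integral $\int_0^t g_{n(s)}\dd B(E(s))$, I rewrite it as $\int_0^{E(t)}g_{i(u)}\dd B(u)$. This integrand is predictable, since $g_{i(u)}$ is measurable at $E^\delta_{i(u)}\leq E_{i(u)}<u$. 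BDG then gives a bound $CE(T)^{p/2-1}\int_0^{E_N}\EB|g_{i(u)}|^p\dd u\leq Ce^{CA(T)}$. The bound on $[-\tau,0]$ is immediate from \Cref{ass:history}. Polynomial factors in $A(T)$ are absorbed into $e^{CA(T)}$.

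\textbf{Defect decomposition.} Summing \eqref{eq:fd-scheme} gives
\[
X_n=\xi(0)+\int_0^{t_n}h_{\Delta,n(s)}\dd s+\sum_{k<n}\Delta E_k^\delta f_k+\int_0^{E_n^\delta}g_{j(u)}\dd B(u),
\]
where the $\dd t$ terms agree exactly with those of $\widehat X(t_n)$. The drift difference is
\[
\sum_{k<n}f_k(\Delta E_k-\Delta E_k^\delta)=\int_0^{E_n}f_{i(u)}\dd u-\int_0^{E_n^\delta}f_{j(u)}\dd u .
\]
I split this as $\int_0^{E_n^\delta}(f_{i(u)}-f_{j(u)})\dd u+\int_{E_n^\delta}^{E_n}f_{i(u)}\dd u$. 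For the first piece I take the supremum over $n$, bound it by $\int_0^{E_N}|f_{i}-f_j|\dd u$, and apply Minkowski in $L^p(\PP_B)$ together with \Cref{lem:partition-coefficients}; this gives $Ce^{CA(T)}\delta^\kappa$. For the second piece, the interval has length at most $\delta$ by \eqref{eq:clock-bracket}, and bounding $\max_k|f_k|$ through the moment bound gives $C\delta e^{CA(T)}$.

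The Brownian difference splits the same way, into $M_1(E_n^\delta)$ with $M_1(v)=\int_0^v(g_{i(u)}-g_{j(u)})\dd B(u)$, plus $\int_{E_n^\delta}^{E_n}g_{i(u)}\dd B(u)$. One point needs care here. The integrand $g_{j(u)}$ must be predictable, which holds because $g_j$ is known at $E_j^\delta<u$; after extending the last cell, $u$ in the last approximate cell still exceeds $E^\delta_{N-1}$. The sampling times $E_n^\delta$ are deterministic under $\PP_B$, so $\max_n|M_1(E_n^\delta)|\leq\sup_v|M_1(v)|$. BDG then gives the bound $C\bigl(\int_0^{E_N}(\EB|g_i-g_j|^p)^{2/p}\dd u\bigr)^{p/2}$, using Minkowski for the $L^{p/2}$ norm of $\int|\cdot|^2\dd u$. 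By \Cref{lem:partition-coefficients} this is at most $Ce^{CA(T)}\delta^{\kappa p}$.

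\textbf{Main obstacle: the endpoint term.} I expect the hardest step to be the maximum over $n$ of the endpoint term $\int_{E_n^\delta}^{E_n}g_{i(u)}\dd B(u)$. A union bound over $n$ would introduce a factor of $N$. Instead, this is the increment $M(E_n)-M(E_n^\delta)$ of $M(v)=\int_0^vg_{i(u)}\dd B(u)$ over windows of length at most $\delta$. I will apply \Cref{lem:martingale-modulus} with $G=g_{i(\cdot)}$, $L=\max(1,E_N)$, and $S\leq\max_k|g_k|$. The $L^{2p}$ moment of $S$ is controlled by $\max_k|g(X_k)|$, and hence by \Cref{thm:moments} and \eqref{eq:growth}: bound the max of the polynomial by the polynomial of $\max|X_k|$. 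This gives $Ce^{CA(T)}[\delta\log(e+L/\delta)]^{p/2}$. Since $\log(e+L/\delta)\leq C(1+|\log\delta|)(1+E(T))$, the extra factor is absorbed into $e^{CA(T)}$.

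Collecting the pieces yields $Ce^{CA(T)}(\delta^{\kappa}+\sqrt{\delta(1+|\log\delta|)})^p$. Since $\delta^{1/2}\leq\sqrt{\delta(1+|\log\delta|)}$, this is at most $Ce^{CA(T)}\varepsilon_\delta^p$, which is the second claim.
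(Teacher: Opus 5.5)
Your proposal is correct and follows the paper's proof essentially step for step. It uses the same H\"older/BDG moment bound and the same four-term defect decomposition, with the mismatch terms controlled by \Cref{lem:partition-coefficients} and the maximal endpoint increment by \Cref{lem:martingale-modulus}. The only difference is cosmetic. You split at $E_n^\delta$ and put the endpoint pieces on the exact-partition integrands $f_{i(u)}$ and $g_{i(u)}$ (that is, on $M_a$), whereas the paper splits at $E_n$ and uses $f_{j(u)}$ and $M_b$; both identities hold and give identical bounds.
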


\begin{proof}
For the first assertion, apply H\"older's inequality and the BDG inequality to \eqref{eq:fd-lift}.
By \eqref{eq:growth} and \Cref{thm:moments} at order $(\ell+1)p$,
\begin{align*}
    &\quad\EB\left[\sup_{0\leq t\leq T}|\widehat X(t)|^p\right]\\
    &\leq C+CA(T)^{p-1}\int_0^T\EB[|h_{\Delta,n(s)}|^p+|f_{n(s)}|^p]\dd A(s)+CE(T)^{p/2-1}\int_0^T\EB[|g_{n(s)}|^p]\dd E(s)\\
    &\leq C[1+A(T)^p+E(T)^{p/2}]e^{CA(T)}\\
    &\leq Ce^{CA(T)}.
\end{align*}
Since $\widehat X=\xi$ on $[-\tau,0]$, the first estimate follows.
For the second estimate, define the following martingales
\begin{equation*}
    M_a(u)=\int_0^u g_{i(v)}\dd B(v),\qquad M_b(u)=\int_0^u g_{j(v)}\dd B(v),\qquad 0\leq u\leq E_N.
\end{equation*}

Subtracting \eqref{eq:fd-scheme}, summed over $k=0,\ldots,n-1$, from \eqref{eq:fd-lift} at $t_n$ and cancelling the $\dd t$ drift terms, we obtain
\begin{equation}
    \widehat X(t_n)-X_n =\int_0^{E_n}(f_{i(u)}-f_{j(u)})\dd u +\int_{E_n^\delta}^{E_n}f_{j(u)}\dd u+(M_a-M_b)(E_n)+M_b(E_n)-M_b(E_n^\delta).\label{eq:lift-identity}
\end{equation}
Taking the maximum over $0\leq n\leq N$, raising to the $p$th power, and applying $\EB$, we obtain
\begin{align}
	&\quad \EB\left[\max_{0\leq n\leq N}|\widehat X(t_n)-X_n|^p\right]\notag\\
	&\leq C\EB\left[\left(\int_0^{E_N}|f_{i(u)}-f_{j(u)}|\dd u\right)^p\right]+C\EB\left[\max_{0\leq n\leq N}\left|\int_{E_n^\delta}^{E_n}f_{j(u)}\dd u\right|^p\right]\notag\\
	&\quad+C\EB\left[\sup_{0\leq u\leq E_N}|(M_a-M_b)(u)|^p\right]+C\EB\left[\max_{0\leq n\leq N}|M_b(E_n)-M_b(E_n^\delta)|^p\right].\label{eq:lift-error-decomposition}
\end{align}
By Minkowski's inequality and \Cref{lem:partition-coefficients},
\begin{align}
    \EB\left[\left(\int_0^{E_N}|f_{i(u)}-f_{j(u)}|\dd u\right)^p\right]&\leq\left(\int_0^{E_N}(\EB[|f_{i(u)}-f_{j(u)}|^p])^{1/p}\dd u\right)^p\notag\\
    &\leq Ce^{CA(T)}\delta^{\kappa p}.\label{eq:lift-drift-mismatch}
\end{align}
By \eqref{eq:growth} and \Cref{thm:moments}, the second satisfies
\begin{align}
    \EB\left[\max_n\left|\int_{E_n^\delta}^{E_n}f_{j(u)}\dd u\right|^p\right]&\leq\delta^p\EB\left[\max_{0\leq k<N}|f_k|^p\right]\notag\\
    &\leq Ce^{CA(T)}\delta^p.\label{eq:lift-drift-endpoint}
\end{align}
By the BDG and Minkowski inequalities and \Cref{lem:partition-coefficients},
\begin{align}
    \EB\left[\sup_{0\leq u\leq E_N}|(M_a-M_b)(u)|^p\right]&\leq C\left(\int_0^{E_N}(\EB[|g_{i(u)}-g_{j(u)}|^p])^{2/p}\dd u\right)^{p/2}\notag\\*
    &\leq Ce^{CA(T)}\delta^{\kappa p}.\label{eq:lift-martingale}
\end{align}

To control the endpoint increments of $M_b$, extend its integrand by zero to $[0,L]$, where $L=1+E_N$.
By $|g_n|\leq|g(X_n)|$, \eqref{eq:growth}, and \Cref{thm:moments}, the $2p$th moment of its supremum is bounded by $Ce^{CA(T)}$.
Since $0\leq E_n-E_n^\delta\leq\delta$, applying \Cref{lem:martingale-modulus} gives
\begin{align}
	&\quad \EB\left[\max_{0\leq n\leq N}|M_b(E_n)-M_b(E_n^\delta)|^p\right]\notag\\
	&\leq \EB\left[\sup_{\substack{0\leq u<v\leq L\\v-u\leq\delta}}|M_b(v)-M_b(u)|^p\right]\notag\\
	&\leq Ce^{CA(T)}[\delta\log(e+L/\delta)]^{p/2}\notag\\
	&\leq Ce^{CA(T)}[\delta(1+|\log\delta|)]^{p/2}.\label{eq:lift-endpoint}
\end{align}

Substituting \eqref{eq:lift-drift-mismatch}--\eqref{eq:lift-endpoint} into \eqref{eq:lift-error-decomposition} and using $\delta^\kappa\leq\varepsilon_\delta$ and $\sqrt{\delta(1+|\log\delta|)}\leq\varepsilon_\delta$, we obtain the second estimate.
\end{proof}

With the moment bounds and partition-mismatch estimates established, we now analyze the local increments and residuals, proving the conditional error recursion and the strong convergence theorem.

\begin{lemma}\label{lem:local}
Under \Cref{ass:polynomial,ass:current,ass:time,ass:history}, for every $p\geq2$ and $s\in[0,T]$,
\begin{align*}
    \EB[|\widehat X(s)-\widetilde X(s)|^p]&\leq Ce^{CA(T)}[(\Delta+\Xi_\Delta)^{p/2}+\varepsilon_\delta^p],\\
    \EB[|\widehat X(s-\tau)-\widetilde X(s-\tau)|^p]&\leq Ce^{CA(T)}[\Delta^{\gamma p}+(\Delta+\Xi_\Delta)^{p/2}+\varepsilon_\delta^p].
\end{align*}
\end{lemma}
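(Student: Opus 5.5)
We plan to split at the nearest grid node. For $s\in[t_n,t_{n+1})$ with $n=n(s)\geq0$ we have $\widetilde X(s)=X_n$, and we write
\begin{equation*}
    \widehat X(s)-\widetilde X(s)=\bigl(\widehat X(s)-\widehat X(t_n)\bigr)+\bigl(\widehat X(t_n)-X_n\bigr).
\end{equation*}
The second difference is handled directly by \Cref{prop:lift-defect}: its $p$th moment under $\PP_B$ is at most $Ce^{CA(T)}\varepsilon_\delta^p$. The case $s=T$ needs no separate treatment, since $\widetilde X(T)=X_N$ and the nodal estimate applies.

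For the first difference we use the integral representation \eqref{eq:fd-lift} on $[t_n,s]$, where all integrands are frozen at index $n$. This gives
\begin{equation*}
    \widehat X(s)-\widehat X(t_n)=h_{\Delta,n}(s-t_n)+f_n\bigl(E(s)-E(t_n)\bigr)+g_n\bigl(B(E(s))-B(E(t_n))\bigr).
\end{equation*}
Under $\PP_B$ the clock is deterministic, and $g_n$ is $\mathcal G_{t_n}$-measurable, so it is independent of the Brownian increment over $(E(t_n),E(s)]$. The Gaussian moment bound therefore gives
\begin{equation*}
    \EB\bigl[|g_n(B(E(s))-B(E(t_n)))|^p\bigr]\leq C(E(s)-E(t_n))^{p/2}\EB[|g_n|^p].
\end{equation*}
The drift terms contribute $\Delta^p\EB[|h_{\Delta,n}|^p]$ and $(E(s)-E(t_n))^p\EB[|f_n|^p]$. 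By \eqref{eq:growth} and \Cref{thm:moments} at order $(\ell+1)p$, all coefficient moments are bounded by $Ce^{CA(T)}$, using that taming only decreases norms. We need $E(s)-E(t_n)\leq\Xi_\Delta$; this holds from the definition of $\Xi_\Delta$ when $t_n\leq T-\Delta$, which is always the case because $t_n<s\leq T$ forces $n\leq N-1$. Since $\Delta\leq1$, $\Xi_\Delta\leq E(T)$, and polynomial factors in $E(T)$ are absorbed into $e^{CA(T)}$, we have $\Delta^p+\Xi_\Delta^p+\Xi_\Delta^{p/2}\leq Ce^{CA(T)}(\Delta+\Xi_\Delta)^{p/2}$. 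This proves the first estimate.

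For the delayed estimate, if $s-\tau\geq0$ we apply the first estimate at $s-\tau$ directly. If $s-\tau\in[t_m,t_{m+1})\subset[-\tau,0)$, then $\widehat X(s-\tau)=\xi(s-\tau)$ and $\widetilde X(s-\tau)=X_m=\xi(t_m)$. \Cref{ass:history} then bounds the difference deterministically by $L_2\Delta^\gamma$, producing the $\Delta^{\gamma p}$ term.

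The only delicate point is the measurability and independence needed for the Gaussian bound on $[t_n,s]$. The coefficient $g_n$ depends on the Brownian path only up to $E_n^\delta\leq E_n$, and the filtration $\mathcal G_t$ contains the full $D$-path. Hence conditioning under $\PP_B$ with the clock fixed makes the increment $B(E(s))-B(E(t_n))$ independent of $g_n$. Everything else is routine.
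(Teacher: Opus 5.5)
Your proof is correct and follows essentially the same route as the paper. Both arguments split at the grid node $t_{n(s)}$, bound the frozen-coefficient increment of $\widehat X$ on $[t_n,s]$ via \eqref{eq:growth}, \Cref{thm:moments} and $E(s)-E_n\leq\Xi_\Delta$, control the nodal defect by \Cref{prop:lift-defect}, and handle $s<\tau$ through the H\"older continuity of $\xi$; you also justify explicitly the independence of $g_n$ from $B(E(s))-B(E_n)$ (since $g_n$ depends on $B$ only up to $E_n^\delta\leq E_n$), which the paper leaves implicit.
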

\begin{proof}
For $s\in[t_n,t_{n+1})$, subtracting \eqref{eq:fd-lift} at $t_n$ gives
\begin{equation*}
    \widehat X(s)-\widehat X(t_n)=(s-t_n)h_{\Delta,n}+(E(s)-E_n)f_n+g_n[B(E(s))-B(E_n)].
\end{equation*}
By \eqref{eq:growth}, and \Cref{thm:moments},
\begin{align*}
    \EB[|\widehat X(s)-\widehat X(t_n)|^p]&\leq Ce^{CA(T)}[\Delta^p+(E(s)-E_n)^p+(E(s)-E_n)^{p/2}]\\
    &\leq Ce^{CA(T)}[(\Delta+\Xi_\Delta)^p+\Xi_\Delta^{p/2}]\\
    &\leq Ce^{CA(T)}(\Delta+\Xi_\Delta)^{p/2}.
\end{align*}
The last inequality uses $\Delta+\Xi_\Delta\leq A(T)$ and absorbs the remaining polynomial factor into the exponential.
Since $\widetilde X(s)=X_n$ for $s\in[t_n,t_{n+1})$ (with $\widetilde X(T)=X_N$), the decomposition
\begin{equation*}
	\widehat X(s)-\widetilde X(s)=[\widehat X(s)-\widehat X(t_n)]+[\widehat X(t_n)-X_n]
\end{equation*}
and \Cref{prop:lift-defect} establish the first bound for all $s\in[0,T]$.

When $s\in[\tau,T]$, since $s-\tau\in[0,T]$, the estimate follows directly from the first assertion.
When $s\in[0,\tau)$, we have $\widehat X(s-\tau)=\xi(s-\tau)$ and $\widetilde X(s-\tau)=\xi(\fl{s}-\tau)$.
By \Cref{ass:history},
\begin{equation*}
    |\widehat X(s-\tau)-\widetilde X(s-\tau)|=|\xi(s-\tau)-\xi(\fl{s}-\tau)|\leq L_2\Delta^\gamma.
\end{equation*}
Combining these cases proves the delayed estimate.
\end{proof}

To combine time-step discretization, coefficient regularity, and taming in a single error equation, define
\begin{align*}
    \mathcal R_h(s)&=h(s,E(s),\widehat X(s),\widehat X(s-\tau))-h_{\Delta,n(s)},\\
    \mathcal R_f(s)&=f(s,E(s),\widehat X(s),\widehat X(s-\tau))-f_{n(s)},\\
    \mathcal R_g(s)&=g(s,E(s),\widehat X(s),\widehat X(s-\tau))-g_{n(s)}.
\end{align*}
The time-grid contribution is measured by
\begin{equation}\label{eq:errorscale}
    \varepsilon_\Delta=\Delta^\gamma+\Delta^{\beta_1}+\Xi_\Delta^{\beta_2}+(\Delta+\Xi_\Delta)^{1/2}.
\end{equation}
This quantity depends on $D$ but is deterministic under $\PP_B$.
Together with \eqref{eq:clock-error-scale}, it controls the residuals in the following lemma.

\begin{lemma}\label{lem:residual}
Under \Cref{ass:polynomial,ass:current,ass:time,ass:history}, for every $p\geq2$,
\begin{equation*}
    \int_0^T\EB[|\mathcal R_h(s)|^p]\dd s+\int_0^T\EB[|\mathcal R_f(s)|^p+|\mathcal R_g(s)|^p]\dd E(s)\leq Ce^{CA(T)}(\varepsilon_\Delta+\varepsilon_\delta)^p.
\end{equation*}
\end{lemma}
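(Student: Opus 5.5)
We split each residual into a regularity part and a taming part. For $s\in[t_n,t_{n+1})$ and $\varphi\in\{h,f,g\}$ we write
\begin{align*}
    \mathcal R_\varphi(s)&=\bigl[\varphi(s,E(s),\widehat X(s),\widehat X(s-\tau))-\varphi(t_n,E_n^\delta,\widehat X(s),\widehat X(s-\tau))\bigr]\\
    &\quad+\bigl[\varphi(t_n,E_n^\delta,\widehat X(s),\widehat X(s-\tau))-\varphi(t_n,E_n^\delta,X_n,X_{n-J})\bigr]\\
    &\quad+\bigl[\varphi(X_n)-\varphi_n\bigr],
\end{align*}
where $\varphi_n$ stands for $h_{\Delta,n}$, $f_n$ or $g_n$.

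\emph{Time-argument part.} By \Cref{ass:time}, the first bracket is bounded by $L_2(1+|\widehat X(s)|^{\ell+1}+|\widehat X(s-\tau)|^{\ell+1})\bigl(\Delta^{\beta_1}+|E(s)-E_n^\delta|^{\beta_2}\bigr)$. Using \eqref{eq:clock-bracket}, we have $|E(s)-E_n^\delta|\leq (E(s)-E_n)+\delta\leq\Xi_\Delta+\delta$, and by subadditivity of $x\mapsto x^{\beta_2}$ this gives $\Xi_\Delta^{\beta_2}+\delta^{\beta_2}$. Its $p$th moment is therefore at most $Ce^{CA(T)}(\Delta^{\beta_1}+\Xi_\Delta^{\beta_2}+\delta^{\beta_2})^p$, by the moment bound for $\widehat X$ in \Cref{prop:lift-defect} at order $(\ell+1)p$ (the history part being bounded by \Cref{ass:history}).

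\emph{Spatial part.} By \Cref{ass:polynomial}, the second bracket is bounded by $L_1(1+|\widehat X(s)|^\ell+|X_n|^\ell+|\widehat X(s-\tau)|^\ell+|X_{n-J}|^\ell)\bigl(|\widehat X(s)-\widetilde X(s)|+|\widehat X(s-\tau)-\widetilde X(s-\tau)|\bigr)$. Here we use $\widetilde X(s)=X_n$ and $\widetilde X(s-\tau)=X_{n-J}$, since $\fl{s}-\tau=t_{n-J}$. Applying Cauchy--Schwarz, \Cref{thm:moments}, \Cref{prop:lift-defect} at order $2\ell p$, and \Cref{lem:local} at order $2p$ gives a bound $Ce^{CA(T)}\bigl[\Delta^{\gamma p}+(\Delta+\Xi_\Delta)^{p/2}+\varepsilon_\delta^p\bigr]$.

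\emph{Taming part.} By \eqref{eq:tamebias} and \Cref{thm:moments} with \eqref{eq:growth}, the third bracket has $p$th moment at most $Ce^{CA(T)}\Delta^{p/2}$ for $h$, and at most $Ce^{CA(T)}(\Delta E_n^\delta)^{p/2}$ for $f$ and $g$. Moreover $\Delta E_n^\delta\leq\Delta E_n+\delta\leq\Xi_\Delta+\delta$, so $(\Delta E_n^\delta)^{p/2}\leq C(\Xi_\Delta^{p/2}+\delta^{p/2})$, and $\delta^{1/2}\leq\varepsilon_\delta$. The last cell $n=N-1$ needs no special treatment, because the bound $\Delta E_n\leq\Xi_\Delta$ still holds there.

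\emph{Integration.} All pointwise bounds are uniform in $s$ and deterministic under $\PP_B$. We integrate against $\dd s$ over $[0,T]$ and against $\dd E(s)$, whose total mass is $E(T)\leq A(T)$; the factor $A(T)$ is absorbed into $e^{CA(T)}$. Collecting terms, every contribution is dominated by $(\varepsilon_\Delta+\varepsilon_\delta)^p$ in the notation of \eqref{eq:errorscale} and \eqref{eq:clock-error-scale}.

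\textbf{Main obstacle.} The delicate point is the spatial term, where the difference between $\widehat X$ and the piecewise-constant $\widetilde X$ must be controlled uniformly in $s$, including the delayed argument on $[0,\tau)$ where the history's H\"older exponent $\gamma$ enters. \Cref{lem:local} already supplies exactly this estimate, so the remaining work is bookkeeping of H\"older exponents and the orders at which the moment bounds are applied.
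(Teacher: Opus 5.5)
Your proposal is correct and follows essentially the same route as the paper. The paper's proof splits each residual into the same three parts: a time-argument part, controlled by \Cref{ass:time} and $0\leq E(s)-E_n^\delta\leq\Xi_\Delta+\delta$; a spatial part, controlled by \Cref{ass:polynomial}, Cauchy--Schwarz and \Cref{lem:local} at order $2p$; and the taming bias from \eqref{eq:tamebias} with $\Delta E_n^\delta\leq\Xi_\Delta+\delta$. It then integrates the uniform-in-$s$ bound against $\dd s$ and $\dd E(s)$, with the same bookkeeping of moment orders and exponents as yours.
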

\begin{proof}
For $s\in[t_n,t_{n+1})$, \eqref{eq:clock-bracket} gives
\begin{equation*}
    0\leq E(s)-E_n^\delta\leq\Xi_\Delta+\delta,\qquad \Delta E_n^\delta\leq\Xi_\Delta+\delta.
\end{equation*}
By \Cref{ass:polynomial,ass:time}, Cauchy--Schwarz, \eqref{eq:tamebias}, \Cref{thm:moments}, and \Cref{prop:lift-defect},
\begin{align*}
    &\quad\sum_{\varphi\in\{h,f,g\}}\EB[|\mathcal R_\varphi(s)|^p]\\
    &\leq Ce^{CA(T)}\bigl[(\EB[|\widehat X(s)-\widetilde X(s)|^{2p}])^{1/2}+(\EB[|\widehat X(s-\tau)-\widetilde X(s-\tau)|^{2p}])^{1/2}\\
    &\hspace{18mm}+\Delta^{\beta_1p}+(\Xi_\Delta+\delta)^{\beta_2p}+\Delta^{p/2}+(\Xi_\Delta+\delta)^{p/2}\bigr]\\
    &\leq Ce^{CA(T)}\bigl[\Delta^{\gamma p}+\Delta^{\beta_1p}+\Xi_\Delta^{\beta_2p}+(\Delta+\Xi_\Delta)^{p/2}+\varepsilon_\delta^p+\delta^{\beta_2p}+\delta^{p/2}\bigr]\\
    &\leq Ce^{CA(T)}(\varepsilon_\Delta+\varepsilon_\delta)^p.
\end{align*}
Here, the second inequality applies \Cref{lem:local} at order $2p$, while the last step follows from $\delta^{\beta_2}+\sqrt\delta\leq\varepsilon_\delta$.
\end{proof}

\begin{proof}[Proof of \Cref{thm:fully-discrete}]
Let $e(t)=X(t)-\widehat X(t)$ for $t\in[-\tau,T]$.
The auxiliary process has the exact initial history, so $e(t)=0$ for $-\tau\leq t\leq0$.
For each $\varphi\in\{h,f,g\}$, separate the current-state difference and the delayed-state and residual terms as
\begin{align*}
    d_\varphi(s)&=\varphi(s,E(s),X(s),X(s-\tau))-\varphi(s,E(s),\widehat X(s),X(s-\tau)),\\
    r_\varphi(s)&=\varphi(s,E(s),\widehat X(s),X(s-\tau))-\varphi(s,E(s),\widehat X(s),\widehat X(s-\tau))+\mathcal R_\varphi(s).
\end{align*}
The error process $e(s)$ then satisfies
\begin{equation*}
    \dd e(s)=(d_h(s)+r_h(s))\dd s+(d_f(s)+r_f(s))\dd E(s)+(d_g(s)+r_g(s))\dd B(E(s)).
\end{equation*}
By \Cref{ass:current}, the $\mathrm{d}t$ drift satisfies $\ip{e(s)}{d_h(s)}\leq K_1|e(s)|^2$.
Using \Cref{ass:current} with parameter $2p+1$, we have
\begin{equation*}
    \ip{e(s)}{d_f(s)}+p|d_g(s)|^2\leq C|e(s)|^2.
\end{equation*}
Meanwhile, applying \Cref{ass:polynomial} to the delayed difference in $r_\varphi(s)$ yields
\begin{equation*}
    |r_\varphi(s)| \leq C(1+|\widehat X(s)|^\ell+|X(s-\tau)|^\ell+|\widehat X(s-\tau)|^\ell)|e(s-\tau)| + |\mathcal R_\varphi(s)|.
\end{equation*}
By \eqref{eq:growth} and \Cref{prop:exact,prop:lift-defect}, the local martingale in It\^o's formula for $|e(t)|^p$ is a true square-integrable martingale under $\PP_B$.
Applying It\^o's formula to $|e(t)|^p$, we bound the diffusion correction by $p(p-1)|e(s)|^{p-2}(|d_g(s)|^2+|r_g(s)|^2)$.
The coupled bound above leaves the negative contribution $-p|e(s)|^{p-2}|d_g(s)|^2$.
Young's inequality for the remaining terms therefore gives
\begin{equation}\label{eq:error-decomp}
	|e(t)|^p+pJ(t)\leq F(t)+M(t),
\end{equation}
where
\begin{align*}
	J(t)&=\int_0^t|e(s)|^{p-2}|d_g(s)|^2\dd E(s),\\
	F(t)&=C\int_0^t|e(s)|^p\dd A(s)+C\int_0^t|r_h(s)|^p\dd s+C\int_0^t(|r_f(s)|^p+|r_g(s)|^p)\dd E(s),\\
	M(t)&=p\int_0^t|e(s)|^{p-2}\ip{e(s)}{(d_g(s)+r_g(s))\dd B(E(s))}.
\end{align*}
Here, the factor $|e(s)|^{p-2}$ is understood as $1$ when $p=2$.
Taking $\EB$ on both sides yields $p\EB[J(t)]\leq\EB[F(t)]$ since $\EB[M(t)]=0$.
By the BDG and Young inequalities,
\begin{align}\label{eq:error-m-bound}
	\EB\left[\sup_{0\leq u\leq t}|M(u)|\right]&\leq\frac12\EB\left[\sup_{0\leq u\leq t}|e(u)|^p\right]+C\int_0^t\EB[|e(s)|^{p-2}|d_g(s)+r_g(s)|^2]\dd E(s)\notag\\
	&\leq\frac12\EB\left[\sup_{0\leq u\leq t}|e(u)|^p\right]+C\EB[J(t)]+C\int_0^t\EB[|e(s)|^p+|r_g(s)|^p]\dd E(s)\notag\\
	&\leq\frac12\EB\left[\sup_{0\leq u\leq t}|e(u)|^p\right]+C\EB[F(t)].
\end{align}

Taking the supremum over $u\in[0,t]$ in \eqref{eq:error-decomp}, dropping $pJ(t)\geq0$, and taking expectation $\EB$, we use \eqref{eq:error-m-bound} to absorb the martingale contribution, which yields
\begin{equation}\label{eq:error-sup-f}
    \EB\left[\sup_{0\leq u\leq t}|e(u)|^p\right]\leq C\EB[F(t)].
\end{equation}
For $s\leq k\tau$ with $1\leq k\leq K$, the delayed argument satisfies $s-\tau\in[-\tau,(k-1)\tau]$.
Applying the Cauchy--Schwarz inequality together with the uniform moment bounds in \Cref{prop:exact,prop:lift-defect} to the delayed term in $|r_\varphi(s)|^p$, we obtain
\begin{align}\label{eq:delayed-term-bound}
	&\quad\EB\bigl[ (1+|\widehat X(s)|^\ell+|X(s-\tau)|^\ell+|\widehat X(s-\tau)|^\ell)^p |e(s-\tau)|^p\bigr]\notag\\*
	&\leq \bigl(\EB[(1+|\widehat X(s)|^\ell+|X(s-\tau)|^\ell +|\widehat X(s-\tau)|^\ell)^{2p}]\bigr)^{1/2} \left(\EB\left[\sup_{-\tau\leq u\leq(k-1)\tau}|e(u)|^{2p}\right]\right)^{1/2}\notag\\*
	&\leq Ce^{CA(T)}\left(\EB\left[\sup_{-\tau\leq u\leq(k-1)\tau}|e(u)|^{2p}\right]\right)^{1/2}.
\end{align}
Substituting \eqref{eq:delayed-term-bound} into \eqref{eq:error-sup-f} and applying \Cref{lem:delay,lem:residual}, we obtain for $t\leq k\tau$
\begin{align*}
    \EB\left[\sup_{0\leq u\leq t}|e(u)|^p\right] &\leq C\int_0^t \EB\left[\sup_{0\leq u\leq s}|e(u)|^p\right]\dd A(s)\\*
    &\quad+Ce^{CA(T)}\left[A(T)\left(\EB\left[\sup_{-\tau\leq u\leq(k-1)\tau}|e(u)|^{2p}\right]\right)^{1/2}+(\varepsilon_\Delta+\varepsilon_\delta)^p\right].
\end{align*}
Applying \Cref{lem:stieltjes-gronwall} and absorbing $A(T)$ into the exponential, we obtain
\begin{equation}\label{eq:errorrecursion}
    \EB\left[\sup_{-\tau\leq t\leq k\tau}|e(t)|^p\right] \leq Ce^{CA(T)} \left[(\varepsilon_\Delta+\varepsilon_\delta)^p+\left(\EB\left[\sup_{-\tau\leq u\leq(k-1)\tau}|e(u)|^{2p}\right]\right)^{1/2}\right].
\end{equation}
Since $e(t)=0$ on $[-\tau,0]$, the delayed term vanishes for $k=1$, which gives
\begin{equation*}
	\EB\left[\sup_{-\tau\leq t\leq\tau}|e(t)|^p\right]\leq Ce^{CA(T)}(\varepsilon_\Delta+\varepsilon_\delta)^p.
\end{equation*}
Iterating \eqref{eq:errorrecursion} over $k=1,\dots,K$, simultaneously for all $p\geq2$, we obtain
\begin{equation*}
	\EB\left[\sup_{0\leq t\leq T}|e(t)|^p\right]\leq Ce^{CA(T)}(\varepsilon_\Delta+\varepsilon_\delta)^p,
\end{equation*}
since $T=K\tau$.
Splitting the nodal error and applying \Cref{prop:lift-defect} gives
\begin{align*}
    \EB\left[\max_{0\leq n\leq N}|X(t_n)-X_n|^p\right]&\leq C\EB\left[\sup_{0\leq t\leq T}|e(t)|^p\right]+C\EB\left[\max_{0\leq n\leq N}|\widehat X(t_n)-X_n|^p\right]\\
    &\leq Ce^{CA(T)}(\varepsilon_\Delta+\varepsilon_\delta)^p,
\end{align*}

Taking the total expectation $\EE=\ED[\EB[\cdot]]$ gives
\begin{equation*}
    \EE\left[\max_{0\leq n\leq N}|X(t_n)-X_n|^p\right]\leq C\ED[e^{CA(T)}\varepsilon_\Delta^p]+C\varepsilon_\delta^p\ED[e^{CA(T)}].
\end{equation*}
By \Cref{lem:inverse},
\begin{equation*}
    \ED[e^{CA(T)}\varepsilon_\Delta^p]\leq C\left[\Delta^{\gamma p}+\Delta^{\beta_1p}+\omega_\Delta^{\beta_2p}+\omega_\Delta^{p/2}\right],
\end{equation*}
and $\ED[e^{CA(T)}]<\infty$.
Taking the $p$th root proves the logarithmic bound.
For fixed $0<\rho<\alpha$ and $0<\theta<1/2$, absorbing the logarithmic factors into the corresponding power losses yields the stated power bound.
\end{proof}

\subsection{Proof of the lower bound}\label{sec:proof-lower-bound}

In this subsection, we prove \Cref{thm:lower-bound} by establishing the terminal lower bound for the multiplicative-noise equation with a stable subordinator.

\begin{proof}[Proof of \Cref{thm:lower-bound}]
Since the stable inverse $E(t)$ has a continuous density for each $t>0$, almost surely $E_n^\delta=\delta\lfloor E_n/\delta\rfloor$ for all $0\leq n\leq N$.
The exact solution is given by $X(T)=\mathcal X(E_N)$ with $\mathcal X(u)=\exp\{B(u)-u/2\}$.
For this scalar test model, the ETEM scheme reduces to
\begin{equation*}
	X_{n+1}=X_n+G_n\Delta B_n,\qquad G_n=\frac{X_n}{\sqrt{1+\sqrt{\Delta E_n^\delta}|X_n|^2}},\qquad X_0=1.
\end{equation*}
Setting $G(u)=G_n$ on $(E_n^\delta,E_{n+1}^\delta]$ and $G(u)=0$ on $(E_N^\delta,E_N]$, the terminal error admits the integral representation
\begin{equation*}
	X(T)-X_N=\int_0^{E_N}(\mathcal X(u)-G(u))\dd B(u).
\end{equation*}
For $0\leq s\leq\Delta E_n^\delta$, the martingale property and independent Brownian increments yield
\begin{equation*}
	\EB[(\mathcal X(E_n^\delta+s)-G_n)^2]=e^{E_n^\delta}(e^s-1)+\EB[(\mathcal X(E_n^\delta)-G_n)^2].
\end{equation*}
Integrating over $[0,E_N]$ and applying the It\^o isometry, we obtain
\begin{align}\label{eq:lower-identity}
	\EB[|X(T)-X_N|^2]&=e^{E_N}-e^{E_N^\delta}+\sum_{n=0}^{N-1}e^{E_n^\delta}(e^{\Delta E_n^\delta}-1-\Delta E_n^\delta)+\sum_{n=0}^{N-1}\Delta E_n^\delta\EB[(\mathcal X(E_n^\delta)-G_n)^2]\notag\\
	&\geq E_N-E_N^\delta+\tfrac12\sum_{n=0}^{N-1}(\Delta E_n^\delta)^2,
\end{align}
where the inequality follows from $e^x-1-x\geq x^2/2$, $e^x-e^y\geq x-y$ for $x\geq y\geq 0$, and the nonnegativity of the remaining terms.

By \cite[Lemma~2.1]{GajdaMagdziarz2014} with moment order two, for $0<\Delta\leq T/2$,
\begin{align}
	\EE\left[\sum_{n=0}^{N-1}(E_{n+1}-E_n)^2\right]
	&\geq\frac{2\Delta^{1+\alpha}}{\Gamma(\alpha)\Gamma(2+\alpha)}\sum_{n=1}^{N-1}t_{n+1}^{\alpha-1}\notag\\
	&\geq\frac{T^\alpha}{\Gamma(\alpha)\Gamma(2+\alpha)}\Delta^\alpha.\label{eq:lower-quadratic}
\end{align}
Here, we omit the nonnegative term with $n=0$ and use $t_{n+1}\leq T$ and $N-1\geq T/(2\Delta)$.

By stable scaling, $\PP(E(T)>u)=\PP(D(u)\leq T)=\PP((T/D(1))^\alpha\geq u)$, so that $E(T)\stackrel{d}{=}(T/D(1))^\alpha$.
Since $D(1)$ possesses a smooth density, $E(T)$ also admits a density.
Applying periodic averaging to the fractional part $z\mapsto\{z\}$ with mean $\int_0^1 z\dd z=1/2$, we obtain
\begin{equation}\label{eq:lower-gap}
	\lim_{\delta\to0}\frac{1}{\delta}\EE[E_N-E_N^\delta]=\lim_{\delta\to0}\EE[\{E(T)/\delta\}]=\tfrac12.
\end{equation}
In particular, there exists $\delta_0>0$ such that $\EE[E_N-E_N^\delta]\geq\delta/4$ for all $0<\delta\leq\delta_0$.

To relate the discrete increments $\Delta E_n^\delta$ to the exact increments $E_{n+1}-E_n$, note that $|(E_{n+1}-E_n)-\Delta E_n^\delta|\leq\delta$.
Since $a^2-b^2=(a-b)(a+b)$, summing over $n=0,\dots,N-1$ yields
\begin{align*}
	\left|\sum_{n=0}^{N-1}(E_{n+1}-E_n)^2-\sum_{n=0}^{N-1}(\Delta E_n^\delta)^2\right|&\leq\delta(E_N+E_N^\delta)\leq2\delta E_N.
\end{align*}
Recalling $\EE[E_N]=T^\alpha/\Gamma(1+\alpha)$ and setting $c_2=T^\alpha/[\Gamma(\alpha)\Gamma(2+\alpha)]$ from \eqref{eq:lower-quadratic}, we choose $c_0=c_2/(4\EE[E_N])$.
We now examine two complementary regimes.
If $\delta\leq c_0\Delta^\alpha$, the perturbation is absorbed by $2\delta\EE[E_N]\leq\frac12 c_2\Delta^\alpha$, which implies $\EE[\sum_{n=0}^{N-1}(\Delta E_n^\delta)^2]\geq\frac12 c_2\Delta^\alpha$.
Combining this with \eqref{eq:lower-identity} and \eqref{eq:lower-gap}, we obtain
\begin{equation*}
	\EE[|X(T)-X_N|^2]\geq\tfrac14 c_2\Delta^\alpha+\tfrac14\delta.
\end{equation*}
If instead $\delta>c_0\Delta^\alpha$, the sampling gap bound \eqref{eq:lower-gap} alone suffices, as $\Delta^\alpha<c_0^{-1}\delta$
\begin{equation*}
	\EE[|X(T)-X_N|^2]\geq\frac{\delta}{4}\geq\frac{\Delta^\alpha+\delta}{4(1+c_0^{-1})}.
\end{equation*}
Taking the square root in either case yields $(\EE[|X(T)-X_N|^2])^{1/2}\geq c(\Delta^{\alpha/2}+\delta^{1/2})$, completing the proof.
\end{proof}

\section{Numerical experiments}\label{sec:experiments}

In this section, numerical examples illustrate the strong convergence of the fully discrete ETEM scheme.
Unless otherwise stated, we use the following parameters.
Throughout all simulations, we fix the terminal time $T = 2$ and delay $\tau = 1/2$.
The expectation is approximated by the standard Monte Carlo average over $M = 2000$ independent sample paths.
The test step sizes are chosen as $\Delta \in \{2^{-9}, 2^{-10}, \dots, 2^{-15}\}$, with the subordinator sampling step size set to $\delta = \Delta/8$.
All delay-model approximations are computed by the ETEM scheme \eqref{eq:fd-scheme}.
The reference solution $X_{\rm ref}$ is computed on a fine grid with $\Delta_{\rm ref} = 2^{-19}$ and $\delta_{\rm ref} = 2^{-23}$.
The strong error is evaluated in the grid-maximum root-mean-square sense
\begin{equation*}
    \mathrm{RMSE}(\Delta) = \left( \frac{1}{M} \sum_{i=1}^M \max_{0 \leq n \leq N} \|X_n^{(i)} - X_{\rm ref}^{(i)}(t_n)\|^2 \right)^{1/2}, \qquad N = \frac{T}{\Delta}.
\end{equation*}

\begin{example}\label{ex:polynomial}
We first consider the scalar nonautonomous superlinear TCSDDE
\begin{align*}
    \mathrm{d}X(t) &= h(t, E(t), X(t), X(t-\tau))\,\mathrm{d}t + f(t, E(t), X(t), X(t-\tau))\,\mathrm{d}E(t) \\
    &\quad + g(t, E(t), X(t), X(t-\tau))\,\mathrm{d}B(E(t)),
\end{align*}
where the coefficient functions are given by
\begin{align*}
    h(t, u, x, y) &= \left(0.25 + 0.08\sin(2\pi t) + 0.05\frac{u}{1+u}\right)x - \left(1 + 0.08\cos(\pi t)e^{-u}\right)x^3 + 0.2y^3, \\
    f(t, u, x, y) &= \left(0.20 + 0.06\cos(2\pi t) + 0.04\frac{u}{1+u}\right)x - \left(1 + 0.08\frac{\sin(\pi t)}{1+u}\right)x^5 + 0.2y^3, \\
    g(t, u, x, y) &= \left(0.28 + 0.04\sin(2\pi t) + 0.04\frac{u}{1+u}\right)x^2 + 0.1y^2,
\end{align*}
and the initial history is $\xi(s) = 1 + 0.1\sin s$ for $s \in [-1/2, 0]$.
\end{example}

For this problem, the polynomial growth degree is $\ell = 4$, and the H\"older regularity parameters in time are $\beta_1 = 1$ and $\beta_2 = 1$.
According to \Cref{thm:fully-discrete}, the theoretical convergence rate is dominated by $\alpha/2$.
We test the stability indices $\alpha = 0.6$ and $\alpha = 0.8$, for which the theoretical reference slopes are $0.3$ and $0.4$, respectively.
The empirical least-squares convergence rates across all tested step sizes are $0.312$ for $\alpha = 0.6$ and $0.388$ for $\alpha = 0.8$.
\Cref{fig:ex3} compares the numerical errors with the theoretical reference slopes.

\begin{figure}[H]
\centering
\includegraphics[width=.48\linewidth]{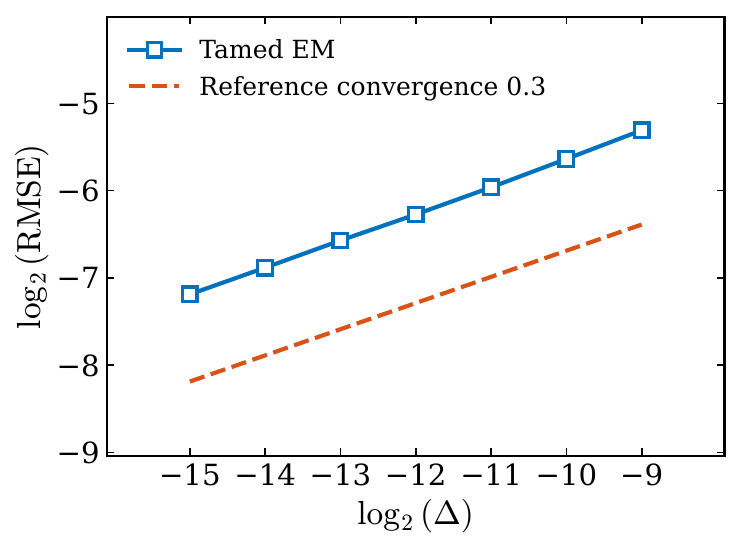}\hfill
\includegraphics[width=.48\linewidth]{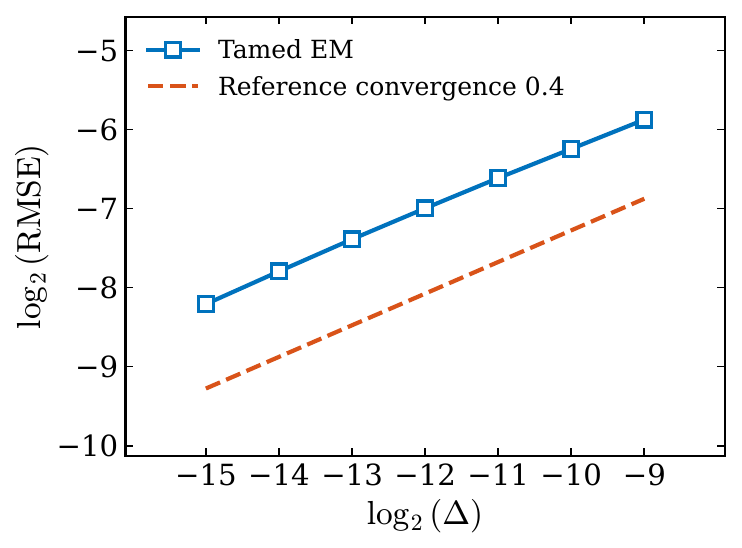}
\caption{Strong convergence rates for \Cref{ex:polynomial}: $\alpha=0.6$ (left) and $\alpha=0.8$ (right).}\label{fig:ex3}
\end{figure}

We next examine the convergence with respect to the subordinator sampling step $\delta$ predicted by \Cref{thm:fully-discrete} for \Cref{ex:polynomial}.
To isolate the sampling error, we fix $\Delta=2^{-22}$ and vary $\delta\in\{2^{-9},\dots,2^{-15}\}$, computing the grid-maximum RMSE against an ETEM reference solution with the same $\Delta$ and $\delta_{\rm ref}=2^{-23}$.
As shown in \Cref{fig:ex3-clock}, the empirical least-squares convergence rates are $0.481$ for $\alpha=0.6$ and $0.443$ for $\alpha=0.8$.

\begin{figure}[H]
\centering
\includegraphics[width=.48\linewidth]{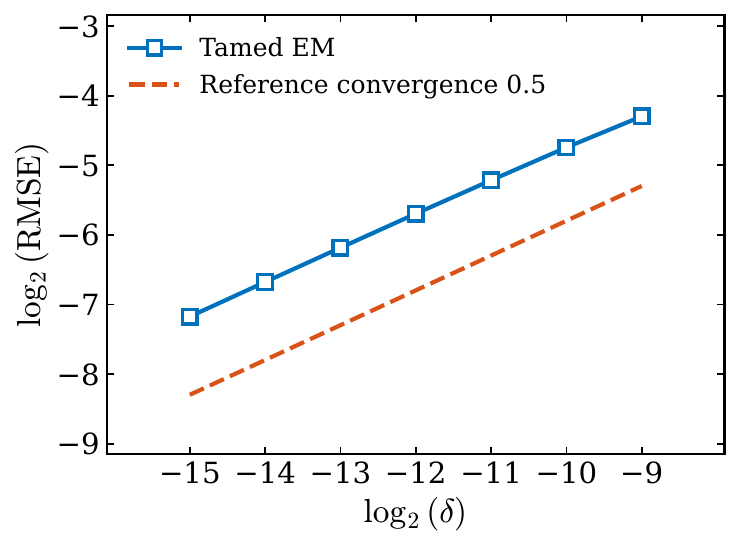}\hfill
\includegraphics[width=.48\linewidth]{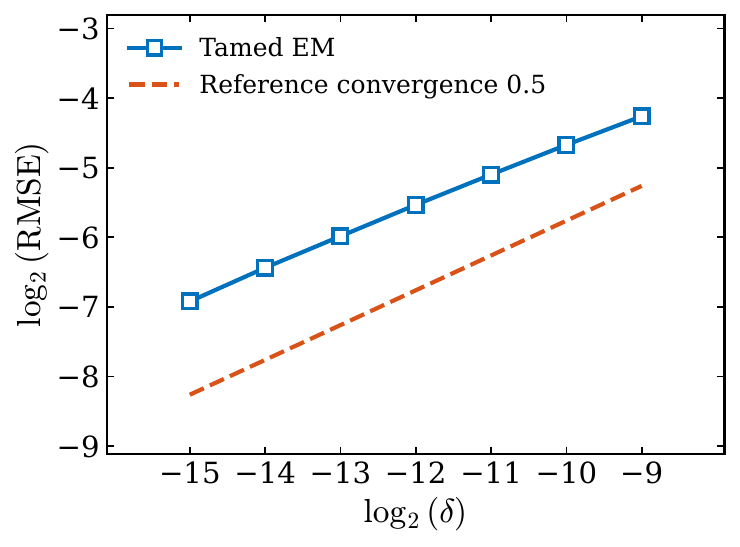}
\caption{Strong convergence rates with respect to $\delta$ for \Cref{ex:polynomial}: $\alpha=0.6$ (left) and $\alpha=0.8$ (right).}\label{fig:ex3-clock}
\end{figure}

\begin{example}\label{ex:2d-delay}
We next consider a two-dimensional TCSDDE with coupled superlinear drifts and cross delays.
Let $X(t) = (X_1(t), X_2(t))^\top$, driven by two independent Brownian motions $B_1, B_2$ sharing the common inverse subordinator $E(t)$.
For $i = 1, 2$ and $j = 3 - i$, the system reads
\begin{align*}
    \mathrm{d}X_i(t) &= \left(-X_i(t) - X_i^3(t) + 0.2X_j(t-\tau)\right)\mathrm{d}t + \left(-2X_i(t) - X_i^5(t) + 0.2X_j(t-\tau)\right)\mathrm{d}E(t) \\
    &\quad + \left(0.2X_j^2(t) + 0.1X_j(t-\tau)\right)\mathrm{d}B_i(E(t)),
\end{align*}
with constant initial history $X(s) = (1, 0.8)^\top$ for $s \in [-1/2, 0]$.
\end{example}

Since the system is autonomous, we may take $\beta_1=\beta_2=1$.
The polynomial exponent in the strong error bound is therefore $\alpha/2$.
For $\alpha = 0.6$ and $\alpha = 0.8$, the theoretical reference slopes are $0.3$ and $0.4$, while the empirical least-squares rates are $0.264$ and $0.377$, respectively.
\Cref{fig:ex4} displays the numerical errors alongside the reference slopes.

\begin{figure}[H]
\centering
\includegraphics[width=.48\linewidth]{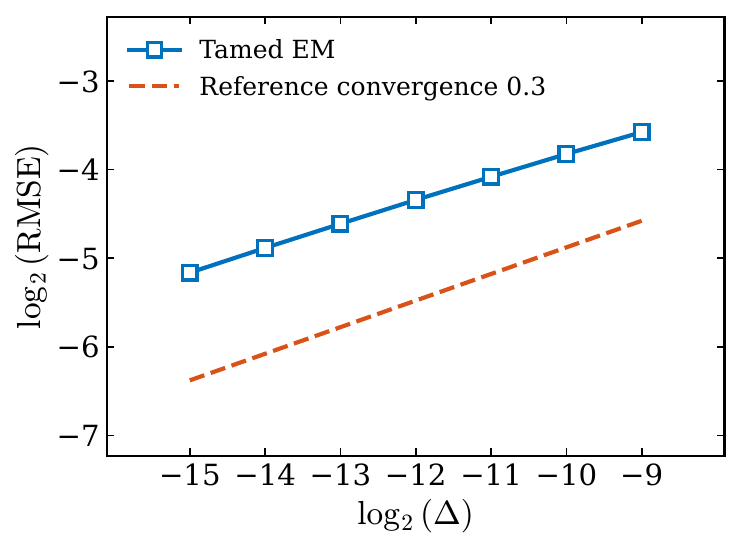}\hfill
\includegraphics[width=.48\linewidth]{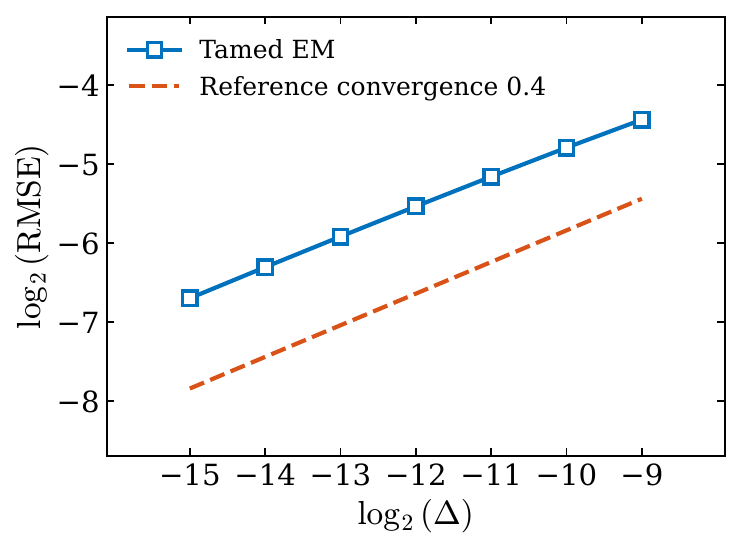}
\caption{Strong convergence rates for \Cref{ex:2d-delay}: $\alpha=0.6$ (left) and $\alpha=0.8$ (right).}\label{fig:ex4}
\end{figure}

\begin{example}\label{ex:rough-coefficients}
To examine the role of the time regularity exponents $\beta_1$ and $\beta_2$ in \Cref{thm:fully-discrete}, we consider a TCSDDE with $\beta$-H\"older continuous coefficients in either $t$ or $E(t)$.
Let $\psi(s) = 1 - 4|\{s + \frac{1}{4}\} - \frac{1}{2}|$ be the periodic triangular wave, where $\{s\}$ denotes the fractional part of $s$.
For $\beta\in(0,1)$, we consider the following Takagi--Landsberg-type function
\begin{equation*}
    W_\beta(s) = (1 - 2^{-\beta}) \sum_{k=0}^\infty 2^{-k\beta} \psi(2^k s),
\end{equation*}
which is bounded and $\beta$-H\"older continuous (see \cite{MishuraSchied2019, MakoginMishura2020} for details).
The equation is given by
\begin{align*}
    \mathrm{d}X(t) &= (-X(t) + 0.2X(t-\tau))\,\mathrm{d}t + (-0.5X(t) + 0.1X(t-\tau))\,\mathrm{d}E(t) \\
    &\quad + g_\bullet(t, E(t), X(t), X(t-\tau))\,\mathrm{d}B(E(t)),
\end{align*}
with initial history $\xi(s) = 1 + 0.1\sin s$ for $s \in [-1/2, 0]$ and fixed $\alpha = 0.8$.
We examine two configurations for the diffusion coefficient
\begin{itemize}
    \item Case A (roughness in $t$):
    \begin{equation*}
        g_A(t, u, x, y) = 0.1x + 0.05y + 0.5 + W_\beta(t) + 0.05\frac{u}{1+u},
    \end{equation*}
    corresponding to $\beta_1 = \beta$ and $\beta_2 = 1$.
    \item Case B (roughness in $E(t)$):
    \begin{equation*}
        g_B(t, u, x, y) = 0.1x + 0.05y + 0.5 + W_\beta(u) + 0.05\sin t,
    \end{equation*}
    corresponding to $\beta_1 = 1$ and $\beta_2 = \beta$.
\end{itemize}
\end{example}

In Case A, the theoretical rate is $\min\{\beta_1, \alpha/2\} = \min\{\beta, 0.4\}$.
For $\beta = 0.25$ and $\beta = 0.35$, the theoretical slopes are $0.25$ and $0.35$, with empirical rates $0.279$ and $0.358$, respectively.
In Case B, since $\beta < 1/2$, the theoretical rate is governed by $\alpha\beta = 0.8\beta$.
For $\beta = 0.25$ and $\beta = 0.35$, the theoretical slopes are $0.20$ and $0.28$, with empirical rates $0.210$ and $0.290$, respectively.
\Cref{fig:ex5} compares the numerical errors with the theoretical reference slopes across all four cases.

\begin{figure}[H]
\centering
\includegraphics[width=.48\linewidth]{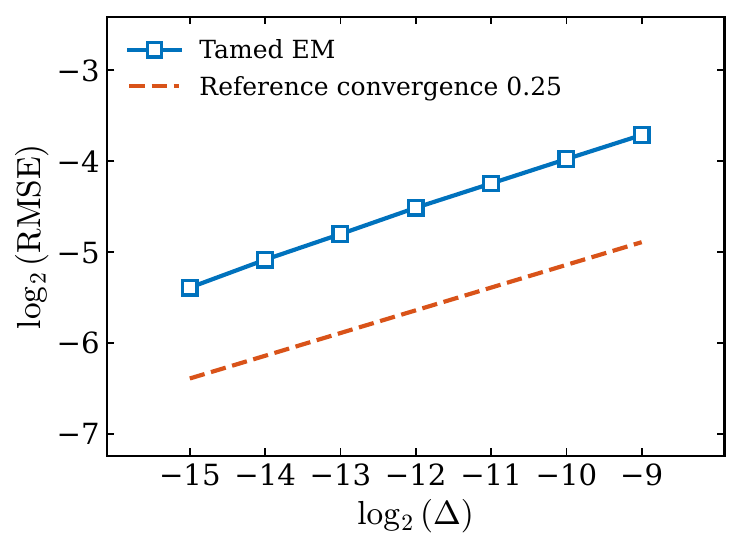}\hfill
\includegraphics[width=.48\linewidth]{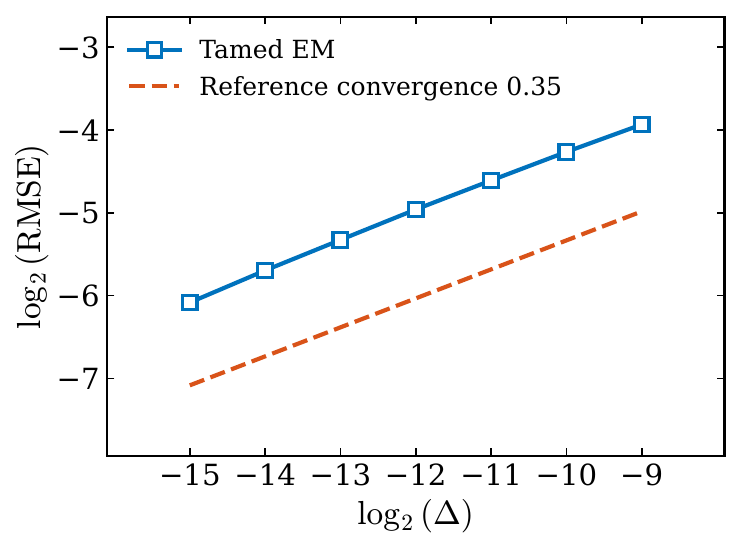}\\[1ex]
\includegraphics[width=.48\linewidth]{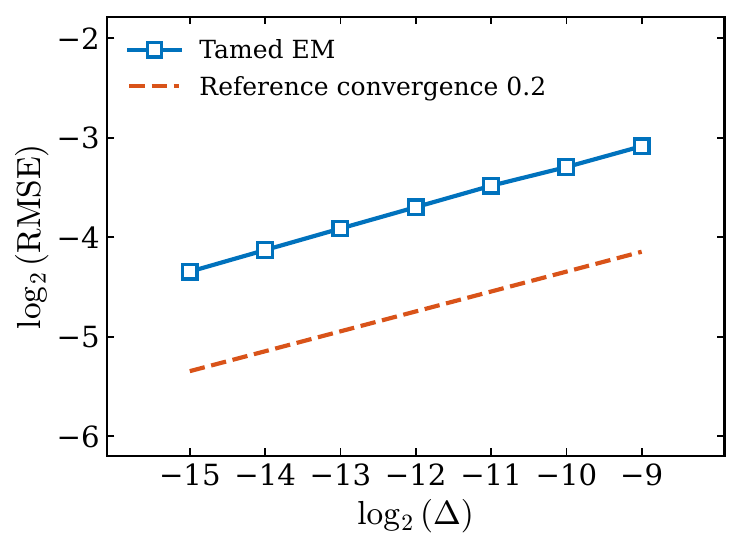}\hfill
\includegraphics[width=.48\linewidth]{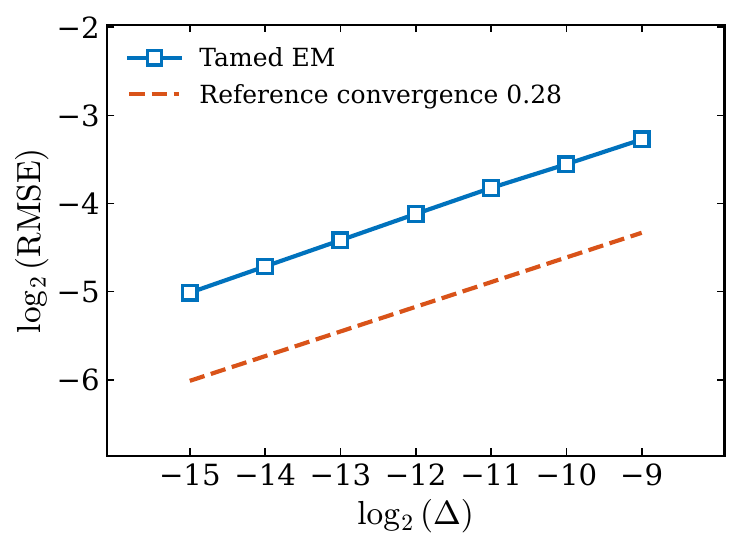}
\caption{Strong convergence rates for \Cref{ex:rough-coefficients}: Case A with $\beta=0.25$ (top-left) and $\beta=0.35$ (top-right); Case B with $\beta=0.25$ (bottom-left) and $\beta=0.35$ (bottom-right).}\label{fig:ex5}
\end{figure}

\section{Conclusion}\label{sec:conclusion}

In this paper, we developed a fully discrete ETEM method for superlinear TCSDDEs with state-dependent $\mathrm{d}t$ drift.
The method combines an equidistant time grid with a sampled inverse subordinator and uses separate taming for the $\mathrm{d}t$ drift and coupled taming for the subordinator-driven drift and diffusion.
Under a coupled one-sided condition, we established strong error bounds in the grid-maximum norm for independently chosen time and subordinator sampling steps, together with a lower bound showing the sharpness of both polynomial exponents.
Numerical experiments illustrated the convergence behavior for nonautonomous, multidimensional, and low-regularity delay models.
Future work includes higher-order strong approximation methods, long-time stability, and the approximation of limiting distributions.

\appendix
\makeatletter
\renewcommand{\@seccntformat}[1]{\appendixname~\csname the#1\endcsname.\quad}
\makeatother

\section{Proof of Lemma~\ref{lem:martingale-modulus}}\label[appendix]{app:martingale-modulus}

\begin{proof}
If $\EB[S^{2p}]=0$, the claim is immediate.
Otherwise, put $M_2=\lceil L/\delta\rceil$.
For $R>0$, let $M_R(u)=\int_0^uG(v)\mathbf1_{\{|G(v)|\leq R\}}\dd B(v)$.
On the event $\{S\leq R\}=\{\sup_{0\leq v\leq L}|G(v)|\leq R\}$, the indicator equals $1$ identically on $[0,L]$, so $M=M_R$ almost surely.
Since every interval $[u,v]\subseteq[0,L]$ with $v-u\leq\delta$ lies in some $[k\delta,((k+2)\delta)\wedge L]$,
\begin{equation}\label{eq:mr-window-bound}
    \sup_{\substack{0\leq u<v\leq L\\v-u\leq\delta}}|M_R(v)-M_R(u)|\leq2\max_{0\leq k<M_2}\sup_{k\delta\leq w\leq((k+2)\delta)\wedge L}|M_R(w)-M_R(k\delta)|.
\end{equation}
For $x>0$, define the target event
\begin{equation*}
    A=\biggl\{\sup_{\substack{0\leq u<v\leq L\\v-u\leq\delta}}|M(v)-M(u)|>x\biggr\}.
\end{equation*}
Partitioning the sample space into $\{S>R\}$ and $\{S\leq R\}$, and noting that $M=M_R$ on $\{S\leq R\}$, we obtain
\begin{align*}
    A&=(A\cap\{S>R\})\cup(A\cap\{S\leq R\})\\
    &\subseteq\{S>R\}\cup\biggl\{\sup_{\substack{0\leq u<v\leq L\\v-u\leq\delta}}|M_R(v)-M_R(u)|>x\biggr\}\\
    &\subseteq\{S>R\}\cup\bigcup_{k=0}^{M_2-1}\biggl\{\sup_{k\delta\leq w\leq((k+2)\delta)\wedge L}|M_R(w)-M_R(k\delta)|>\frac x2\biggr\},
\end{align*}
where the last inclusion follows from \eqref{eq:mr-window-bound}.
Taking the probability $\PP_B$ on both sides and applying the union bound yields
\begin{align}\label{eq:martingale-tail-bound}
    \PP_B(A)
    &\leq\PP_B(S>R)+\sum_{k=0}^{M_2-1}\PP_B\left(\sup_{k\delta\leq w\leq((k+2)\delta)\wedge L}|M_R(w)-M_R(k\delta)|>\frac x2\right)\notag\\
    &\leq\PP_B(S>R)+CM_2\exp\left\{-\frac{x^2}{C\delta R^2}\right\},
\end{align}
where the second inequality applies Bernstein's continuous martingale maximal inequality to each coordinate of $M_R$, whose quadratic variation on each window of length at most $2\delta$ is bounded by $2\delta R^2$.

For $y\geq1$, we choose the scale-dependent parameters
\begin{equation*}
    R=\left(\EB[S^{2p}]\right)^{1/(2p)}\sqrt y,\qquad x=C_1\left(\EB[S^{2p}]\right)^{1/(2p)}\sqrt{\delta\log(e+L/\delta)}\,y.
\end{equation*}
Since $M_2\leq1+L/\delta\leq e+L/\delta$, choosing $C_1$ sufficiently large guarantees that
\begin{equation}\label{eq:exp-tail-bound}
    CM_2\exp\left\{-\frac{x^2}{C\delta R^2}\right\}\leq CM_2\exp\left\{-\frac{C_1^2}{C}y\log(e+L/\delta)\right\}\leq Ce^{-y}.
\end{equation}
Define the normalized random variable
\begin{equation*}
	U=\frac{\sup_{0\leq u<v\leq L,\,v-u\leq\delta}|M(v)-M(u)|}{C_1\left(\EB[S^{2p}]\right)^{1/(2p)}\sqrt{\delta\log(e+L/\delta)}}.
\end{equation*}
By definition of $A$, the event $\{U>y\}$ coincides with $A$ for the threshold $x$ chosen above.
Substituting this $x$, $R=\left(\EB[S^{2p}]\right)^{1/(2p)}\sqrt y$, and \eqref{eq:exp-tail-bound} into the tail bound \eqref{eq:martingale-tail-bound}, we obtain
\begin{equation}\label{eq:u-tail-bound}
    \PP_B(U>y)\leq\PP_B\left(\frac{S}{\left(\EB[S^{2p}]\right)^{1/(2p)}}>\sqrt y\right)+Ce^{-y},\qquad y\geq1.
\end{equation}
To estimate the $p$th moment, we express $\EB[U^p]$ via tail integration and split the integral at $y=1$,
\begin{equation*}
    \EB[U^p]=p\int_0^1 y^{p-1}\PP_B(U>y)\dd y+p\int_1^\infty y^{p-1}\PP_B(U>y)\dd y.
\end{equation*}
Since $\PP_B(U>y)\leq1$ on $[0,1]$, the first integral satisfies $\int_0^1 py^{p-1}\PP_B(U>y)\dd y\leq\int_0^1 py^{p-1}\dd y=1$.
Substituting the tail bound \eqref{eq:u-tail-bound} into the second integral yields
\begin{align*}
    \EB[U^p]
    &\leq1+p\int_1^\infty y^{p-1}\PP_B\left(\frac{S}{\left(\EB[S^{2p}]\right)^{1/(2p)}}>\sqrt y\right)\dd y+Cp\int_1^\infty y^{p-1}e^{-y}\dd y\\
    &\leq1+\int_0^\infty 2pz^{2p-1}\PP_B\left(\frac{S}{\left(\EB[S^{2p}]\right)^{1/(2p)}}>z\right)\dd z+C\\
    &=1+\EB\left[\left(\frac{S}{\left(\EB[S^{2p}]\right)^{1/(2p)}}\right)^{2p}\right]+C\leq C,
\end{align*}
where the second inequality uses the substitution $z=\sqrt y$ ($py^{p-1}\dd y=2pz^{2p-1}\dd z$) and $\int_1^\infty y^{p-1}e^{-y}\dd y\leq\Gamma(p)$.
Rescaling $U$ gives the asserted moment bound.

\end{proof}

\section{Proof of Proposition~\ref{prop:exact}}\label[appendix]{app:exact-moments}

\begin{proof}
Fix a subordinator path from the full-measure set specified above and work under $\PP_B$.
Under $\PP_B$, $E$ is a continuous nondecreasing path and $B(E(t))$ is a continuous local martingale.
By \Cref{ass:polynomial}, the coefficients $h, f, g$ are locally Lipschitz continuous in $(x,y)$.
The classical localization theorem for stochastic delay differential equations (see, e.g., \cite{Mao1991, Protter2004, Zuo2026}) guarantees that \eqref{eq:model} admits a pathwise unique maximal local strong solution $X(t)$ on $[0, \tau_\infty)$, where $\tau_\infty$ denotes the explosion time.

To show that the solution is nonexplosive on $[0,T]$ (that is, $\tau_\infty>T$ almost surely) and to establish the uniform moment bounds, we decompose each coefficient at the current state $x'=0$.
The coupled condition in \Cref{ass:current} controls the drift and diffusion terms together.
For every fixed $p\geq2$, using the coupled condition in \Cref{ass:current} with weight $2p$ and Young's inequality gives
\begin{align}
    \ip{x}{h(t,u,x,y)}&\leq C(1+|x|^2+|y|^{2(\ell+1)}),\notag\\
    \ip{x}{f(t,u,x,y)}+p|g(t,u,x,y)|^2&\leq C(1+|x|^2+|y|^{2(\ell+1)}).\label{eq:energy}
\end{align}
Indeed, writing $f_0=f(t,u,0,y)$ and $g_0=g(t,u,0,y)$, Young's inequality $\ip{x}{f_0}\leq\frac12|x|^2+\frac12|f_0|^2$ and \Cref{ass:current} give
\begin{align*}
    &\quad\ip{x}{f(t,u,x,y)}+p|g(t,u,x,y)|^2\\
    &\leq\ip{x}{f(t,u,x,y)-f_0}+2p|g(t,u,x,y)-g_0|^2+\ip{x}{f_0}+2p|g_0|^2\\
    &\leq C|x|^2+\tfrac12|x|^2+\tfrac12|f_0|^2+2p|g_0|^2\\
    &\leq C(1+|x|^2+|y|^{2(\ell+1)}).
\end{align*}
The estimate for $\ip{x}{h}$ in \eqref{eq:energy} follows similarly.

Set $\zeta_R=\inf\{t\in[0,T]\colon|X(t)|\geq R\}$, with $\inf\varnothing=\infty$.
The stopped process and its delayed states before $\zeta_R$ are bounded, so all stopped stochastic integrals below are true martingales.
Applying It\^o's formula, \eqref{eq:energy}, and Young's inequality, we obtain
\begin{equation}\label{eq:exact-ito-decomp}
    |X(t\wedge\zeta_R)|^p+\frac{p(p+1)}2J_R(t)\leq F_R(t)+\mathcal{M}_R(t),
\end{equation}
where
\begin{align*}
    J_R(t)&=\int_0^{t\wedge\zeta_R}|X(s)|^{p-2}|g(s,E(s),X(s),X(s-\tau))|^2\dd E(s),\\
    F_R(t)&=|\xi(0)|^p+C\int_0^{t\wedge\zeta_R}(1+|X(s)|^p+|X(s-\tau)|^{(\ell+1)p})\dd A(s),\\
    \mathcal{M}_R(t)&=p\int_0^{t\wedge\zeta_R}|X(s)|^{p-2}X(s)^\top g(s,E(s),X(s),X(s-\tau))\dd B(E(s)).
\end{align*}
For $p=2$, the factor $|X(s)|^{p-2}$ is one.
Since $\mathcal{M}_R$ is a continuous martingale with $\EB[\mathcal{M}_R(t)]=0$, dropping the non-negative term $|X(t\wedge\zeta_R)|^p$ and taking $\EB$ in \eqref{eq:exact-ito-decomp} yields
\begin{equation}\label{eq:exact-jr-bound}
    \EB[J_R(t)]\leq C\EB[F_R(t)].
\end{equation}
By the Burkholder--Davis--Gundy (BDG) and Young inequalities,
\begin{align}\label{eq:exact-mr-bound}
    \EB\left[\sup_{0\leq u\leq t}|\mathcal{M}_R(u)|\right]
    &\leq C\EB\left[\left\{\sup_{0\leq u\leq t}|X(u\wedge\zeta_R)|^pJ_R(t)\right\}^{1/2}\right]\notag\\
    &\leq\tfrac12\EB\left[\sup_{-\tau\leq u\leq t}|X(u\wedge\zeta_R)|^p\right]+C\EB[J_R(t)]\notag\\
    &\leq\tfrac12\EB\left[\sup_{-\tau\leq u\leq t}|X(u\wedge\zeta_R)|^p\right]+C\EB[F_R(t)],
\end{align}
where the third inequality uses \eqref{eq:exact-jr-bound}.
Dropping the non-negative energy term $\frac{p(p+1)}{2}J_R$ in \eqref{eq:exact-ito-decomp}, taking the supremum over $u\in[0,t]$, and combining with the initial history $\sup_{-\tau\leq u\leq0}|X(u\wedge\zeta_R)|^p\leq C$, we obtain
\begin{equation*}
    \sup_{-\tau\leq u\leq t}|X(u\wedge\zeta_R)|^p\leq C+F_R(t)+\sup_{0\leq u\leq t}|\mathcal{M}_R(u)|.
\end{equation*}
Taking $\EB$ on both sides and substituting \eqref{eq:exact-mr-bound}, we absorb the resulting $\frac12$-weighted supremum moment into the left-hand side.
For every $0\leq t\leq k\tau$ with $1\leq k\leq K$, this yields
\begin{align}\label{eq:exact-integral-ineq}
    &\quad\EB\left[\sup_{-\tau\leq u\leq t}|X(u\wedge\zeta_R)|^p\right]\notag\\
    &\leq C+C\int_0^t\EB\left[\sup_{-\tau\leq u\leq s}|X(u\wedge\zeta_R)|^p\right]\dd A(s)+C\int_0^t\EB\left[\mathbf 1_{\{s<\zeta_R\}}|X(s-\tau)|^{(\ell+1)p}\right]\dd A(s)\notag\\
    &\leq C+C\int_0^t\EB\left[\sup_{-\tau\leq u\leq s}|X(u\wedge\zeta_R)|^p\right]\dd A(s)+CA(t)\EB\left[\sup_{-\tau\leq u\leq(k-1)\tau}|X(u\wedge\zeta_R)|^{(\ell+1)p}\right],
\end{align}
where the second inequality uses $s-\tau<\zeta_R$ on $\{s<\zeta_R\}$ to write $X(s-\tau)=X((s-\tau)\wedge\zeta_R)$, applies \Cref{lem:delay} to bound the delayed state by the history supremum on $[-\tau,(k-1)\tau]$, and retains $\int_0^t\mathbf 1_{\{s<\zeta_R\}}\dd A(s)\leq A(t)$.

Applying \Cref{lem:stieltjes-gronwall} to \eqref{eq:exact-integral-ineq} on $[0,k\tau]$ yields
\begin{equation}\label{eq:exact-gronwall-step}
    \EB\left[\sup_{-\tau\leq u\leq k\tau}|X(u\wedge\zeta_R)|^p\right]
    \leq Ce^{CA(T)}\left[1+A(T)\EB\left[\sup_{-\tau\leq u\leq(k-1)\tau}|X(u\wedge\zeta_R)|^{(\ell+1)p}\right]\right].
\end{equation}
For $k=0$, the bounded initial history gives $\EB[\sup_{-\tau\leq u\leq0}|X(u\wedge\zeta_R)|^p]=\sup_{-\tau\leq u\leq0}|\xi(u)|^p\leq C$.
Induction on $k=1,\ldots,K$ via \eqref{eq:exact-gronwall-step}, simultaneously for every finite $p\geq2$, therefore yields
\begin{equation}\label{eq:exact-stopped-uniform}
    \EB\left[\sup_{-\tau\leq u\leq T}|X(u\wedge\zeta_R)|^p\right]\leq Ce^{CA(T)},
\end{equation}
where for a fixed $p$, only the finitely many orders $p,(\ell+1)p,\ldots,(\ell+1)^Kp$ are needed, and polynomial factors in $A(T)$ are absorbed into the exponential.

By Chebyshev's inequality and \eqref{eq:exact-stopped-uniform} at $p=2$,
\begin{equation*}
    \PP_B(\zeta_R\leq T)\leq R^{-2}\EB\left[\sup_{-\tau\leq u\leq T}|X(u\wedge\zeta_R)|^2\right]\leq CR^{-2}e^{CA(T)}\longrightarrow0\qquad\text{as } R\to\infty.
\end{equation*}
Since $\zeta_R \uparrow \tau_\infty$ almost surely as $R\to\infty$, this proves that $\tau_\infty > T$ almost surely under $\PP_B$, and hence the maximal local solution does not explode on $[0,T]$.
Fatou's lemma applied to \eqref{eq:exact-stopped-uniform} as $R\to\infty$ then yields the asserted conditional moment bound
\begin{equation}\label{eq:exact-cond-moment}
    \EB\left[\sup_{-\tau\leq t\leq T}|X(t)|^p\right]\leq Ce^{CA(T)}.
\end{equation}
Finally, taking total expectation with respect to $D$ on both sides of \eqref{eq:exact-cond-moment}, recalling $A(T)=T+E(T)$, and applying the exponential integrability from \Cref{lem:inverse}, we have
\begin{equation*}
    \EE\left[\sup_{-\tau\leq t\leq T}|X(t)|^p\right]\leq C\ED[e^{C(T+E(T))}]\leq C,
\end{equation*}
which completes the proof.
\end{proof}

\section*{Statements and Declarations}
\noindent\textbf{Funding.} Qian Guo was supported by the National Natural Science Foundation of China under Grant No.~12271368.
\par\noindent\textbf{Competing interests.} The authors have no relevant financial or non-financial interests to disclose.
\par\noindent\textbf{Data availability.} The data generated during the current study are available from the first author upon reasonable request.
\par\noindent\textbf{Declaration of AI use.} The authors used ChatGPT to assist with English language editing and improving readability.
All suggested changes were reviewed and verified by the authors.
The authors take full responsibility for the content of this manuscript.

\begingroup
\small
\bibliographystyle{plain}
\bibliography{refs}
\endgroup

\end{document}